\documentclass[12pt]{article}

%
\usepackage{mathtools}
\usepackage{amsfonts}
\usepackage{amsthm}
\usepackage{amssymb}

\usepackage{graphicx}

%
\usepackage{lie-ieee-macros}

%

\usepackage{caption}
\usepackage{subcaption}

%

%
\usepackage{url}

\begin{document}
\title{Local observers on linear Lie groups with linear estimation error
  dynamics}
\author{Mikhail~Koldychev~and~Christopher~Nielsen
\thanks{Supported by the Natural Sciences and Engineering
Research Council of Canada (NSERC).}
\thanks{M. Koldychev and C. Nielsen are with the Department of Electrical and
Computer Engineering, University of Waterloo, 200 University Avenue West,
Waterloo, Ontario, Canada, N2L 3G1. {\tt\small \{mkoldych,
cnielsen\}@uwaterloo.ca}}
}

\markboth{}%
{Koldychev, Nielsen : Local observers on linear Lie groups with linear estimation error
  dynamics}

\maketitle

\begin{abstract}
  This paper proposes local exponential observers for systems on
  linear Lie groups. We study two different classes of systems. In the
  first class, the full state of the system evolves on a linear Lie
  group and is available for measurement. In the second class, only
  part of the system's state evolves on a linear Lie group and this
  portion of the state is available for measurement. In each case, we
  propose two different observer designs. We show that, depending on
  the observer chosen, local exponential stability of one of the two
  observation error dynamics, left or right invariant error dynamics,
  is obtained. For the first class of systems these results are
  developed by showing that the estimation error dynamics are
  differentially equivalent to a stable linear differential equation
  on a vector space. For the second class of system, the estimation
  error dynamics are almost linear. We illustrate these observer
  designs on an attitude estimation problem.
\end{abstract}

\section{Introduction}
\label{sec:intro}

Observers for systems on Lie groups is an active area of
research~\cite{crassidis2007survey}. Interest in this research has
been partially motivated by the problem of controlling mobile robots,
and in particular, unmanned aerial vehicles (UAVs). Precise control of
these systems requires accurate estimates of the orientation of a
rigid-body using low cost on-board sensors
~\cite{metni2006attitude}. Small autonomous robots usually undergo
significant vibration and other disturbances, while being restricted
to carrying only a basic light-weight sensor package. For this reason,
high-frequency noise is often present in the sensor measurements of
these robots.  Nonlinear observers for systems on Lie groups are
useful because, in certain cases, they can be used to filter out the
sensor noise.

Recent work on full-state observers for systems on $\SO{3, \Real}$,
describing rigid-body rotational kinematics, was done
in~\cite{MahHamPfl:05}, ~\cite{MahHamPfl:08}. The algorithms in
~\cite{MahHamPfl:05}, ~\cite{MahHamPfl:08} rely on a projection of the
measurement error from the Lie group to its Lie algebra. The projected
vector in the Lie algebra is then used to drive the observer to
converge to the system trajectory. While this projection based
approach does not work for systems on the general linear Lie group,
$\GL{n,\Real}$, the work in~\cite{SarSep09} may contain ideas to
extend these projection based observers to the general linear group.

For systems on $\SO{3}$ with partial state measurements, the
paper~\cite{Salcudean:91} proposes an observer that uses measurements
of the orientation and of the torque to estimate the angular velocity
of the rigid-body. The papers ~\cite{batista2012ges},
~\cite{khosravian2010globally} propose globally exponentially
convergent observers using partial state measurements. The work in
~\cite{grip2012attitude} also uses partial state measurements in their
observers. The paper~\cite{FirNam:11} analyses the effect of noise on
an attitude estimation observer. The authors of~\cite{SchEbeAlg12}
propose observers for $\SO{n, \Real}$.

For systems on $\SE{3, \Real}$, describing rigid-body pose, full-state
observers were proposed in ~\cite{BalMahTruHam:07},
~\cite{HuaZamTru:11}, ~\cite{vasconcelos2010nonlinear}.  For systems
on $\SL{3, \Real}$, describing a homography transformation,
partial-state observers were proposed in~\cite{MalHamMahMor:09}.

In this paper, we consider left-invariant systems on the general
linear Lie group, i.e., the group of all invertible, real $n \times n$
matrices.  The output of the system is taken to be that portion of the
state evolving on the linear Lie group. We first consider the case in
which the entire state evolves on the Lie group. We call these Lie
group full-state observers. Then we consider the case where the states
evolving on the Lie group are only a subset of the systems entire
state. We call these Lie group partial-state observers.

A recent breakthrough in observer design on the general linear Lie
group was achieved in~\cite{LagTruMah:10}, where exponentially
converging observers are proposed for left-invariant and
right-invariant systems on arbitrary finite dimensional, connected Lie
groups. The proposed exponential observer uses gradient-like driving
terms, derived from cost functions of the Lie group measurement
error. In this paper we propose an alternative to gradient-like
observers. Our observers are noteworthy because they yield linear
estimation error dynamics.  A weakness of our result is that we only
prove local exponential stability.

A vector field on a connected Lie group is said to be linear if its
flow is a one-parameter subgroup~\cite{Jou:09}.  Recent work on linear
vector fields and linear systems on Lie groups was done
in~\cite{Jou:09} and~\cite{Jou:11}.  The estimation error dynamics of
our Lie group full-state observers are shown to be linear vector
fields on the Lie group. We also show that the estimation error
dynamics are differentially equivalent, by means of the logarithm map,
to a linear vector field on the Lie algebra.

The research in~\cite{BonMarRou:08},~\cite{BonMarRou:09} considers a
more general class of systems than the left-invariant systems
considered in this paper.  These articles consider systems that evolve
on a vector space, but are such that a certain Lie group action leaves
the system equations unchanged. This shows that, if the plant is
invariant under the action of some Lie group, then part of the states
of the plant can be redefined as evolving on this Lie group, at least
locally.

The main contributions of this paper include the exponential
full-state observer in Section~\ref{sec:LFSO_observer} for
left-invariant systems on the general linear Lie group. Our observer
yields linear estimation error dynamics, distinguishing it from other
observers in the literature. In Section~\ref{sec:LFSO_observer} we
propose an exponential partial-state observer, for a class of systems
that is a generalization of the left-invariant systems considered in
~\ref{sec:LFSO_observer}. This class of systems has only a proper
subset of its states evolving on the Lie group, while the rest of the
states evolve on the Lie algebra. The results in Section~\ref{sec:ODE}
show equivalence properties for certain differential equations on
$\GL{n,\Real}$. The effectiveness of the proposed observers is
illustrated via simulation in Section~\ref{sec:examples}.

\section{Motivation}
\label{sec:motivation}
Attitude control of a rigid body, like a UAV, is simplified if
accurate estimates of its orientation are available. The attitude of a
rigid body in Euclidean three-dimensional space is specified by the
relative orientation between a coordinate frame fixed to the body and
an inertial coordinate system. The orientation between the frames is
described by means of a rotation matrix. If we denote the body frame
by $\mathcal{F}_{b}$ and the inertial frame by $\mathcal{F}_a$ then
the rotation matrix $R_{ab}$ relating $\mathcal{F}_{b}$ to
$\mathcal{F}_a$ is an element of the special orthogonal group $\SO{3,
  \Real} \subset \Real^{3 \times 3}$.


If we assume that the orientation of the body frame $\mathcal{F}_b$
with respect to the inertial frame $\mathcal{F}_a$ varies in time,
then the entries of $R_{ab}$ become functions of time. By
differentiating the identities $R_{ab}R^\top_{ab} = I$ and
$R^\top_{ab}R_{ab} = I$ one can show that $\dot{R}_{ab}R^\top_{ab}$
and $R^\top_{ab}\dot{R}_{ab}$ are skew-symmetric. Define
$\Skew{(\omega^a_{ab})} \coloneqq \dot{R}_{ab}R^\top_{ab}$ where
$\Skew : \Real^3 \to \Real^{3 \times 3}$ takes any vector in $v =
\col(v_1, v_2, v_3) \in \Real^3$ and maps it to a skew-symmetric
matrix
\[
\Skew{(v)} = \left[\begin{array}{ccc}0 & -v_3 & v_2\\v_3 & 0 &
    -v_1\\-v_2 & v_1 & 0\end{array}\right].
\]
The vector $\omega^a_{ab}$ is called the instantaneous inertial
angular velocity. It represents the instantaneous angular velocity of
the rigid body as seen from $\mathcal{F}_a$. Similarly, define the
instantaneous body angular velocity $\omega^b_{ab}$ via
$\Skew{(\omega^b_{ab})} \coloneqq R^\top_{ab}\dot{R}_{ab}$. The matrix
$R_{ab}(t)$ is therefore the solution of either of the differential
equations
\[
\dot{R}_{ab} = R_{ab}\Skew{(\omega^a_{ab})}, \qquad \dot{R}_{ab} =
\Skew{(\omega^b_{ab})}R_{ab}.
\]
These equations are kinematic since they do not involve forces or
torques. In this paper we consider a class of systems that includes
the first of these two differential equations, i.e., we consider
kinematic systems of the form\footnote{The results in this paper also
  apply to equations of the form $\dot{R} = uR$. We do not explicitly
  show these derivations and proofs because they can be obtained, {\em
    mutatis mutandis}, from the results presented.}
\begin{equation}
\begin{aligned}
\dot{R} &= R u \\
Y &= R , 
\label{eq:system_so3}
\end{aligned}
\end{equation}
where $Y \in \SO{3, \Real}$ is a direct measurement of $R$.  This
equation is relevant to attitude control problems because $Y$ and $u$
can be measured or estimated using low-cost sensors, mounted on the
body frame $\mathcal{F}_b$. In fact, $u$ can be measured using angular
rate gyroscopes, while the rotation matrix $Y$ can be calculated from
sensor measurements, made by an accelerometer and magnetometer pair.

Given these measurements, in Section~\ref{sec:proposed} we design an
observer to estimate the state of~\eqref{eq:system_so3}. Our observer
enjoys the property that if the estimate of $R$ isn't too erroneous
initially, then the estimate will converge to $R$ exponentially and
the rate of convergence can be easily tuned using a single
parameter. At first glance it may seem unnecessary to design an
observer to estimate the state of a system if that state is already
measured. We are motivated to study this problem for two
reasons. First such an observer provides noise filtering and can be
used as a sensor fusion algorithm~\cite{MahHamPfl:05},
~\cite{MahHamPfl:08}. The second reason is that by looking at this
simple case we hope to gain insight into the state estimation problem
when the output is not equal to the entire state.

In the latter case, consider a dynamic model of~\eqref{eq:system_so3}
\begin{equation}
\begin{aligned}
\dot{R} &= R \omega \\
\dot{\omega} &= u \\
Y &= R
\label{eq:dynamic_so3}
\end{aligned}
\end{equation}
where $\omega$ and $u$ are skew-symmetric matrices. Here $Y = R$ is
directly measured as well as the angular acceleration $u$.  In
Section~\ref{sec:proposed} we propose observers that fuse these two
sensor measurements to obtain an estimate of angular velocity,
$\omega$, and also to filter out noise from $Y$.

\section{Notation and Preliminaries}
\label{sec:notation}
We denote by $\Real^{+}$ the set of real numbers, equipped with the
additive group structure. Let $\Real_{\leq 0} \coloneqq (-\infty ,
0]$. The empty set is $\varnothing$. If $z \in \Complex$, then
$\RE{(z)}$ and $\IM{(z)}$ denote its real and imaginary parts. We
denote by $\Real^{n \times n}$ the set of $n \times n$ matrices with
real entries. If $X \in \Real^{n \times n}$ then $\sigma(X)$ denotes
the eigenvalues of $X$. If $x \in \Real^n$, then $x_i$ refers to the
$i$th component of $x$.  If $X \in \Real^{n \times n}$, then $X_{ij}$
refers to the $(i,j)$th element of $X$. The symbols $I_n$ and $0_n$
denote the $n\times n$ identity matrix and $n \times n$ zero matrix
respectively. If $A \in \Real^{n \times n}$ then $A^\top$ denotes the
transpose of $A$ and $\tr(A)$ denotes its trace, i.e., $\tr{(A)} =
\sum^n_iA_{ii}$.

We denote by $\GL{n,\Real}$ the general linear Lie group of all
invertible $n \times n$ matrices with real entries and matrix
multiplication as the group operation
\begin{equation*}
\begin{aligned}
  \GL{n, \Real} &\coloneqq \{ X \in \Real^{n \times n} : \det(X) \neq 0 \}.
\end{aligned}
\end{equation*}
We denote by $\M{n,\Real}$ the algebra of all $n \times n$ matrices
with real entries. The bilinear product that makes $\M{n,\Real}$ an
algebra is the matrix commutator, i.e., given $A, B \in \M{n,\Real}$,
the product of $A$ and $B$ is $[A, B] \coloneqq AB - BA$.  For
matrices $A \in \M{n,\Real}$ and $X \in \GL{n,\Real}$, the adjoint map
is $\Ad_X(A) \coloneqq X A X^{-1}$.

For a vector $x \in \Real^n$, $\|x\|$ denotes the Euclidean norm. For
a matrix $A \in \Real^{n \times n}$, the induced matrix norm is
\[
\| A \| \coloneqq \max\left\{\|Ax\| : x\in\Real^n, \|x\| \leq 1\right\}.
\]
Induced norms on $\Real^{n \times n}$ are
submultiplicative~\cite{vid02}, i.e., for any $A$, $B \in \Real^{n
  \times n}$, $\| A B \| \leq \|A\| \|B\|$. Given a matrix $A \in
\Real^{n \times n}$ and a real scalar $r > 0$, define the open ball
\begin{equation}
\begin{aligned}
  B(A,r) \coloneqq \left\{X \in \Real^{n \times n} : \|A - X\| < r
  \right\}.
\end{aligned}
\end{equation}
\begin{proposition}
\label{prop:inverse_def}
Let $X \in B(I_n,1)$, then the series $\sum_{k=0}^{\infty}(-1)^k
(X-I_n)^k$ converges in norm and
\begin{equation}
\begin{aligned}
X^{-1} = \sum_{k=0}^{\infty}(-1)^k (X-I_n)^k.
\label{eq:inverse_def}
\end{aligned}
\end{equation}
\end{proposition}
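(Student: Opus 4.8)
The plan is to treat the series as the Neumann (matrix geometric) series for $X^{-1}$. First I would introduce $A \coloneqq X - I_n$, so that the hypothesis $X \in B(I_n,1)$ is exactly the statement $\|A\| < 1$, and the series in question becomes $\sum_{k=0}^{\infty}(-1)^k A^k$, with partial sums $S_N \coloneqq \sum_{k=0}^{N}(-1)^k A^k$.

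For convergence, I would invoke submultiplicativity of the induced norm to obtain $\|(-1)^k A^k\| = \|A^k\| \leq \|A\|^k$. Since $\|A\| < 1$, the scalar geometric series $\sum_{k=0}^{\infty}\|A\|^k$ converges, so by comparison the matrix series converges absolutely. Because $\Real^{n \times n}$ is finite dimensional, it is complete, and absolute convergence therefore implies that $S_N$ converges in norm to some limit $S \in \Real^{n \times n}$, establishing the first assertion.

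To identify $S$ with $X^{-1}$, I would multiply by $X = I_n + A$ and telescope. The central computation is
\[
X S_N = (I_n + A)\sum_{k=0}^{N}(-1)^k A^k = I_n - (-1)^{N+1} A^{N+1},
\]
obtained by reindexing the second sum. Since $\|A^{N+1}\| \leq \|A\|^{N+1} \to 0$ as $N \to \infty$, and matrix multiplication is continuous, letting $N \to \infty$ yields $X S = I_n$; the mirror-image computation starting from $S_N X$ gives $S X = I_n$. Hence $S = X^{-1}$, which proves~\eqref{eq:inverse_def}. I note that this argument simultaneously establishes that $X$ is invertible, so no separate verification that $X \in \GL{n,\Real}$ is required.

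The proof is essentially routine; the only place demanding any care is the sign bookkeeping in the telescoping identity, where the reindexing $j = k+1$ turns $\sum_{k=0}^{N}(-1)^k A^{k+1}$ into $-\sum_{j=1}^{N+1}(-1)^j A^j$, leaving only the $k=0$ and $k=N+1$ boundary terms after cancellation. Everything else — absolute convergence, completeness of the finite-dimensional space, and continuity of multiplication — is standard, so I do not anticipate a genuine obstacle.
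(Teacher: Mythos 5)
Your proposal is correct and follows essentially the same route as the paper's proof: recognize the Neumann series, establish convergence by comparing with the scalar geometric series via submultiplicativity, and identify the limit by multiplying by $X$ and telescoping. Your use of partial sums $S_N$ with an explicit remainder $(-1)^{N+1}A^{N+1}\to 0$ is a marginally tidier way of justifying the cancellation that the paper carries out directly on the infinite series, but the underlying argument is identical.
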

\begin{proof}
Let $X \in B(I_n, 1)$ and define $M \coloneqq I_n - X$ so that
$\| M \|<1$. Taking the norm of the right hand side of
\eqref{eq:inverse_def}
\begin{equation*}
\begin{aligned}
\left\| \sum_{k=0}^{\infty}(-1)^k (X-I_n)^k \right\| & = \left\| \sum_{k=0}^{\infty}
M^k \right\|\\
&\leq \sum_{k=0}^{\infty} \| M^k \| \\
&\leq \sum_{k=0}^{\infty} \| M \|^{k} \\
&= (1 - ||M||)^{-1} .
\end{aligned}
\end{equation*}
Hence the right hand side of~\eqref{eq:inverse_def} is a convergent
series. To see that its sum equals $X^{-1}$, since $X$ is a square
matrix, it is enough to left-multiply by $X$ and to check that the
result is the identity matrix. To this end
\begin{equation*}
\begin{aligned}
X \sum_{k=0}^{\infty}(-1)^k (X - I_n)^k 
&= (I_n - M) \sum_{k=0}^{\infty} M^k    \\ 
&= \sum_{k=0}^{\infty}\left( M^k - M^{k+1} \right)  \\ 
&= I_n + \sum_{k=1}^{\infty}\left( M^k \right) 
       - \sum_{k=0}^{\infty}\left( M^{k+1} \right) \\
&= I_n + M - M + M^2 - M^2 + \cdots \\  
&= I_n .
\end{aligned}
\end{equation*}
\end{proof}
A consequence of Proposition~\ref{prop:inverse_def} is that the set
$B(I_n,1)$ is contained in $\GL{n,\Real}$. 

\subsection{Linear Lie groups}
In this section, we give a brief introduction to Lie groups as
subgroups of matrices.  The main mathematical references
are~\cite{Faraut:08},~\cite{Che99}. All the results in this section
are standard.

\begin{definition}
A linear Lie group $\g$ is a closed subgroup of $\GL{n,\Real}$.
\end{definition}
A linear Lie group is not the same thing as a matrix Lie group. A
matrix Lie group is a subgroup of $\GL{n,\Real}$ (\cite[Definition
5.13]{Bullo:05}), but not necessarily closed as a set. For example the
subgroup $\GL{n, \mathbb{Q}}$ of rational matrices with rational
inverses is not closed in the topology of $\Real^{n\times n}$. The
fact that linear groups are additionally restricted to be closed sets
ensures, by the Closed Subgroup Theorem~\cite[Theorem 20.10]{Lee02},
that the Lie group is an embedded submanifold of $\Real^{n \times
  n}$. Working with Lie groups that are embedded submanifolds of
$\Real^{n \times n}$, rather than abstract manifolds, allows us to
perform computations in the embedding space. In particular, this
permits the use of standard vector calculus. The idea of doing
computations in a larger and simpler embedding manifold is one of the
key ideas in~\cite{SarSep09}, see
also~\cite{absil2009optimization}. For brevity, the term Lie group is
used in place of linear Lie group throughout. In addition to
$\GL{n,\Real}$, the following linear Lie groups are used in this paper
\begin{equation*}
\begin{aligned}
  \SL{n, \Real} &\coloneqq \{ X \in \Real^{n \times n} : \det(X) = 1 \}, \\
  \SO{n, \Real} &\coloneqq \{ X \in \Real^{n \times n} : X X^{\top} =
  I_n, \det(X) = 1 \} .
\end{aligned}
\end{equation*}

\begin{definition}
Given a matrix $A \in \M{n,\Real}$, the matrix exponential 
$\exp: \M{n,\Real} \rightarrow \GL{n,\Real}$ is defined to be
\begin{equation}
\begin{aligned}
\exp(A) \coloneqq \sum_{k=0}^{\infty} \frac{A^k}{k!} .
\label{eq:exponential_def}
\end{aligned}
\end{equation}
\end{definition}
Since $\|A^k\| \leq \|A\|^k$, the series~\eqref{eq:exponential_def}
converges in norm for every matrix $A \in \Real^{n \times n}$.
\begin{definition}
  Given a linear Lie group $\g$, the Lie algebra of $\g$, denoted by
  $\Lie(\g)$, is the set
\begin{equation*}
\begin{aligned}
  \Lie(\g) \coloneqq \left\{ A \in \M{n,\Real} : \; \left(\forall \;
      t \in \Real\right) \exp(t A) \in \g \right\}.
\end{aligned}
\end{equation*}
\label{def:liealg}
\end{definition}
The results~\cite[Theorem 3.2.1]{Faraut:08},~\cite[Theorem
5.20]{Bullo:05} show that $\Lie(\g)$ is a subalgebra of
$\M{n,\Real}$. The Lie algebras encountered in this paper are given by
\begin{equation*}
\begin{aligned}
&\Lie(\GL{n,\Real}) \coloneqq \M{n,\Real} \\
&\Lie(\SL{n, \Real}) \coloneqq \{ A \in \M{n,\Real} : \tr(A) = 0 \} \\ 
&\Lie(\SO{n, \Real}) \coloneqq \{ A \in \M{n,\Real} : A + A^{\top} = 0_n \} .
\end{aligned}
\end{equation*}
\begin{definition}
  Let $\g$ be a linear Lie group. A one-parameter subgroup of $\g$ is a
  continuous group homomorphism $\gamma: \Real^{+} \rightarrow \g$.
\end{definition}
\begin{lemma}[{\cite[Theorem 3.1.1]{Faraut:08}}]
\label{lemma:oneparam_subgroup}
If $\gamma: \Real^{+} \rightarrow \GL{n,\Real}$ is a one-parameter
subgroup of $\GL{n,\Real}$, then $\gamma$ is real-analytic and
$\gamma(t) = \exp(t A)$, with $A = \gamma^{\prime}(0)$.
\end{lemma}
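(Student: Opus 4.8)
The plan is to first upgrade the hypothesized continuity of $\gamma$ to smoothness, then differentiate the homomorphism identity to obtain a linear matrix ODE whose unique solution is the matrix exponential, and finally read off analyticity from the exponential's power series. Throughout I use that $\Real^{+}$ carries the additive structure, so the homomorphism property reads $\gamma(s+t) = \gamma(s)\gamma(t)$ for all $s,t \in \Real$, with $\gamma(0) = I_n$. Since the only a priori regularity is continuity, the first and most delicate task is to show that $\gamma$ is differentiable.

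For this I would use an integration trick that leans on Proposition~\ref{prop:inverse_def}. Define the matrix-valued integral $W(\varepsilon) \coloneqq \int_0^\varepsilon \gamma(s)\,ds$, computed entrywise; it exists because $\gamma$ is continuous. As $\varepsilon \to 0^{+}$ one has $\varepsilon^{-1} W(\varepsilon) \to \gamma(0) = I_n$, so for $\varepsilon$ small enough $\varepsilon^{-1}W(\varepsilon) \in B(I_n,1)$, whence $W \coloneqq W(\varepsilon)$ is invertible by Proposition~\ref{prop:inverse_def}. Fixing such an $\varepsilon$ and using the homomorphism property together with the substitution $u = t+s$ gives
\[
\gamma(t)\,W = \int_0^\varepsilon \gamma(t)\gamma(s)\,ds = \int_0^\varepsilon \gamma(t+s)\,ds = \int_t^{t+\varepsilon}\gamma(u)\,du,
\]
so that $\gamma(t) = \bigl(\int_t^{t+\varepsilon}\gamma(u)\,du\bigr) W^{-1}$. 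The right-hand side is $C^1$ in $t$ by the fundamental theorem of calculus, its derivative being $\bigl(\gamma(t+\varepsilon)-\gamma(t)\bigr)W^{-1}$, which is continuous; hence $\gamma$ is $C^1$. Bootstrapping this identity — each additional degree of regularity of $\gamma$ confers one more on the right-hand side — shows $\gamma$ is $C^\infty$. This bootstrapping from mere continuity to differentiability is the main obstacle; everything afterward is routine.

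With differentiability secured, I would differentiate $\gamma(t+s) = \gamma(t)\gamma(s)$ with respect to $s$ and evaluate at $s = 0$ to obtain $\gamma'(t) = \gamma(t)\,A$ with $A \coloneqq \gamma'(0)$. This is a linear matrix differential equation with initial condition $\gamma(0) = I_n$. The map $t \mapsto \exp(tA)$ solves the same initial value problem, since $\tfrac{d}{dt}\exp(tA) = \exp(tA)\,A$ (valid because $A$ commutes with $\exp(tA)$) and $\exp(0) = I_n$. By uniqueness of solutions to linear differential equations, $\gamma(t) = \exp(tA)$. Finally, the series $\exp(tA) = \sum_{k=0}^\infty t^k A^k / k!$ converges in norm for every $t$, as noted after~\eqref{eq:exponential_def}, so $\gamma$ is real-analytic, completing the argument.
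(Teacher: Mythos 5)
Your proposal is correct. Note, however, that the paper does not prove this lemma at all: it is stated as a cited result, \cite[Theorem 3.1.1]{Faraut:08}, so there is no in-paper argument to compare against. What you have written is the standard self-contained proof: the integral-averaging trick $\gamma(t) = \bigl(\int_t^{t+\varepsilon}\gamma(u)\,du\bigr)W^{-1}$ to bootstrap continuity up to $C^\infty$, followed by differentiation of the homomorphism identity to get the linear ODE $\gamma'(t) = \gamma(t)A$ and uniqueness of its solution. Each step checks out, and it is a nice touch that the invertibility of $W(\varepsilon)$ for small $\varepsilon$ follows from the paper's own Proposition~\ref{prop:inverse_def} applied to $\varepsilon^{-1}W(\varepsilon) \in B(I_n,1)$. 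One stylistic remark: a common alternative route (and the one closer in spirit to what textbooks such as Faraut's use) avoids the integration trick entirely by taking the local logarithm of $\gamma$ near $t=0$ and using dyadic subdivision together with the uniqueness of square roots near the identity to show $\gamma(t)=\exp(tA)$ on a small interval, then extending by the group property; your approach trades that density argument for a regularity bootstrap, and both are equally valid.
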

\begin{lemma}
\label{lemma:inverse_deriv}
Let $X: \Real \rightarrow \GL{n,\Real}$ be a smooth parameterized
curve. Then
\begin{equation}
\begin{aligned}
\frac{\D }{\D\tau} X^{-1}&= - X^{-1}
\frac{\D X}{\D\tau}X^{-1}.
\end{aligned}
\label{eq:dinv}
\end{equation}
\end{lemma}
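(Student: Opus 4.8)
The plan is to prove that for a smooth curve $X(\tau)$ in $\GL{n,\Real}$, differentiating its inverse gives $-X^{-1}\dot{X}X^{-1}$. This is the matrix analogue of the quotient rule, and the standard approach is to differentiate the defining identity $X X^{-1} = I_n$.

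First I would start from the identity $X(\tau) X^{-1}(\tau) = I_n$, which holds for all $\tau$ since $X$ takes values in $\GL{n,\Real}$. Differentiating both sides with respect to $\tau$ and applying the product rule for matrix-valued functions yields
\begin{equation*}
\begin{aligned}
\frac{\D X}{\D\tau} X^{-1} + X \frac{\D}{\D\tau} X^{-1} = 0_n.
\end{aligned}
\end{equation*}
Solving for the unknown derivative term, I would left-multiply by $X^{-1}$ to isolate $\frac{\D}{\D\tau}X^{-1}$, obtaining
\begin{equation*}
\begin{aligned}
\frac{\D}{\D\tau} X^{-1} = - X^{-1} \frac{\D X}{\D\tau} X^{-1},
\end{aligned}
\end{equation*}
which is exactly \eqref{eq:dinv}.

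The two points requiring care, and the only places where this is more than a one-line computation, are the following. First, the product rule must be justified for matrix-valued functions; since matrix multiplication is bilinear and each entry of the product is a finite sum of products of smooth scalar functions, the ordinary scalar product rule applies entrywise, and one must respect the noncommutativity by keeping the factors in order. Second, one must know a priori that $\tau \mapsto X^{-1}(\tau)$ is itself differentiable before writing down its derivative; I would establish this by appealing to Proposition~\ref{prop:inverse_def} together with smoothness of the entries of $X$, or more directly by invoking Cramer's rule, which expresses each entry of $X^{-1}$ as a ratio of polynomials in the entries of $X$ over the nonvanishing determinant $\det(X)$, hence a smooth function of $\tau$.

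I expect the main obstacle to be this differentiability verification rather than the algebraic manipulation itself, since the computation leading to \eqref{eq:dinv} is immediate once differentiability of $X^{-1}$ is in hand. In the interest of brevity, the proof will likely just note that smoothness of $X^{-1}$ follows from Cramer's rule and then carry out the one-line differentiation of $X X^{-1} = I_n$.
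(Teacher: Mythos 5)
Your proof is correct and follows essentially the same route as the paper: differentiate the identity $X X^{-1} = I_n$ with the product rule and solve for $\frac{\D}{\D\tau}X^{-1}$. Your additional remark justifying the a priori differentiability of $X^{-1}$ via Cramer's rule is a point the paper's proof passes over silently, but it does not change the argument.
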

\begin{proof}
  For any smooth parameterized curve $X: \Real \rightarrow \GL{n,\Real}$, 
  and any $\tau \in \Real$, $X(\tau) X^{-1}(\tau) = I_n$ where
  $\tau$ is the curve parameter. Differentiating both sides of this
  identity with respect to $\tau$, while using the product rule, we
  get
\begin{equation*}
\begin{aligned}
  0 = \frac{\D}{\D\tau} I_n & = \frac{\D}{\D\tau} \left( X X^{-1} \right) \\
  &= \frac{\D X}{\D\tau}X^{-1} + X \frac{\D X^{-1} }{\D\tau},
\end{aligned}
\end{equation*}
from which~\eqref{eq:dinv} immediately follows.
\end{proof}

\begin{definition}
Given a smooth parameterized curve $X: \Real \rightarrow \GL{n,\Real}$, 
the body-velocity of $X$ is the curve $v_X: \Real \rightarrow \M{n,\Real}$, 
defined by $v_X(t) \coloneqq X^{-1}(t) \dot{X}(t)$.
\end{definition}

\subsection{Matrix logarithms}
\begin{definition}
Given a matrix $X \in \Real^{n \times n}$, any $n \times n$ matrix $A$
such that
\[
X = \exp{(A)}
\]
is a logarithm of $X$.
\label{def:mlog}
\end{definition}
Every $X \in \GL{n, \Real}$ has a logarithm~\cite{Wou65}, but the
logarithm isn't necessarily real. Since this paper deals with real Lie
groups, we are interested in conditions under which a matrix has a
real logarithm. Furthermore, to ensure uniqueness, we henceforth only
consider the principle logarithm of a matrix. The following result is
taken from~\cite[Theorem 1.31]{Higham:08},~\cite{Cul66}.
\begin{theorem}[principle logarithm]
  If $X \in \mathbb{\Real}^{n \times n}$ has no eigenvalues on
  $\Real_{\leq 0}$ then there is a unique, real, logarithm $A$ of $X$,
  all of whose eigenvalues lie in the strip
\[
\set{z \in \mathbb{C} : -\pi < \IM{(z)} < \pi}.
\]
\label{thm:plog}
\end{theorem}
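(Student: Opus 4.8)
The plan is to establish existence and uniqueness separately, exploiting the hypothesis that $X$ has no eigenvalues on $\Real_{\leq 0}$, which is exactly what keeps the principal branch of the scalar logarithm well-defined and holomorphic on a neighbourhood of $\sigma(X)$. First I would reduce to the eigenvalue structure of $X$ by passing to a canonical form. Over $\Complex$, write the Jordan decomposition $X = P J P^{-1}$, where $J$ is block-diagonal with Jordan blocks $J_k$ associated to the eigenvalues $\lambda_k \in \sigma(X)$. Since $\exp$ and any candidate logarithm respect conjugation (i.e. $\exp(P A P^{-1}) = P \exp(A) P^{-1}$, which follows termwise from \eqref{eq:exponential_def}), it suffices to construct a logarithm of each Jordan block $J_k$ and then assemble them.

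For the existence of $A$, I would define the principal logarithm via the holomorphic functional calculus. Because no eigenvalue lies on $\Real_{\leq 0}$, one can choose a simply connected region $\Omega \subset \Complex \setminus \Real_{\leq 0}$ containing $\sigma(X)$ on which the principal scalar logarithm $\log z$ (with $-\pi < \IM(\log z) < \pi$) is holomorphic, and set
\[
A \coloneqq \log(X) = \frac{1}{2\pi i}\oint_\Gamma \log(z)\,(zI_n - X)^{-1}\,\D z,
\]
with $\Gamma$ a contour in $\Omega$ enclosing $\sigma(X)$. By the spectral mapping theorem the eigenvalues of $A$ are exactly $\log(\lambda_k)$, and since $-\pi < \IM(\log \lambda_k) < \pi$ by the branch choice, all eigenvalues of $A$ lie in the required open strip. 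That $\exp(A) = X$ follows from the composition rule $\exp \circ \log = \mathrm{id}$ for the functional calculus on $\Omega$. The reality of $A$ is the key structural point: since $X$ is real, its non-real eigenvalues occur in conjugate pairs $\lambda, \bar\lambda$ with conjugate Jordan structure, and the principal logarithm satisfies $\log(\bar z) = \overline{\log z}$ on $\Complex \setminus \Real_{\leq 0}$; hence $\overline{\log(X)} = \log(\overline{X}) = \log(X)$, forcing $A$ to be real.

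For uniqueness, suppose $A'$ is any real logarithm of $X$ with all eigenvalues in the open strip $-\pi < \IM(z) < \pi$. The eigenvalues of $A'$ must be logarithms of the $\lambda_k$, and the strip condition pins each down to the principal value $\log(\lambda_k)$, so $A$ and $A'$ have the same spectrum. To upgrade spectral agreement to equality I would argue blockwise: $A'$ commutes with $X = \exp(A')$, so $A'$ preserves the generalized eigenspaces of $X$; on each such space $X$ has a single eigenvalue $\lambda_k$, and the matrix equation $\exp(A') = J_k$ together with the strip constraint determines the nilpotent part of $A'$ uniquely through the power-series relation between a logarithm and its single Jordan block. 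I expect the main obstacle to be precisely this last uniqueness step on a nondiagonalizable block: one must show that the strip condition eliminates the freedom coming from other branches and that the nilpotent correction is forced, which is where the detailed structure of $\log(I + N)$ for nilpotent $N$ must be invoked rather than merely the spectral mapping theorem.
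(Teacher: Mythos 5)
The paper does not actually prove this theorem: it is imported verbatim from the cited references (Higham, Theorem 1.31, and Culver), so there is no in-paper argument to compare yours against line by line. Your proposal is, in outline, the standard proof underlying those references: existence of the principal logarithm via the holomorphic functional calculus on the simply connected domain $\Complex \setminus \Real_{\leq 0}$, the strip condition from the spectral mapping theorem, reality of $A$ from $\log(\bar z) = \overline{\log z}$ combined with the conjugation symmetry of the spectrum of the real matrix $X$, and uniqueness by localizing to the generalized eigenspaces of $X$. The one step you flag as the ``main obstacle'' but do not carry out does close cleanly, and you should write it down: once the strip constraint pins every eigenvalue of a candidate logarithm $A'$ restricted to the generalized eigenspace $V_k$ to the single principal value $\mu_k = \log \lambda_k$ (the other branches $\mu_k + 2\pi i m$ leave the strip for $m \neq 0$ precisely because $\lambda_k \notin \Real_{\leq 0}$), you have $A'|_{V_k} = \mu_k I + N'$ with $N'$ nilpotent and $\exp(N') = \lambda_k^{-1} X|_{V_k}$; since the series for $\log(I_n + M)$ terminates when $M$ is nilpotent, $\exp$ is injective on nilpotent matrices and $N'$ is uniquely determined, which finishes uniqueness. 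Two small remarks: your uniqueness argument nowhere uses that $A'$ is real, so it actually yields the stronger statement that the principal logarithm is the unique logarithm, real or complex, with spectrum in the open strip (reality then comes for free from the existence half); and the restriction of $X$ to $V_k$ may consist of several Jordan blocks for the same $\lambda_k$, so it is cleaner to argue on the whole generalized eigenspace, as above, rather than ``block by block.''
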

We refer to the logarithm $A$ in Theorem~\ref{thm:plog} as the
principle logarithm of $X$ written $A = \log{(X)}$. The principle
logarithm can equivalently be defined as a series.

\begin{definition}
  Given a matrix $X \in \Real^{n \times n}$ with no eigenvalues on
  $\Real_{\leq 0}$, the principle matrix logarithm is
\begin{equation}
\begin{aligned}
\log(X) \coloneqq\sum_{k=1}^{\infty} \frac{(-1)^{k+1}}{k} (X-I_n)^k .
\label{eq:logarithm_def}
\end{aligned}
\end{equation}
\end{definition}
The series definition of the principle matrix logarithm
~\eqref{eq:logarithm_def} converges for every matrix $X \in B(I_n,1)
\subset \GL{n,\Real}$ because $\sum_k \frac{1}{k}\|X - I_n\|^k$
converges for $\|X - I_n\| < 1$. An alternative series representation
of the principle matrix logarithm is given by Gregory's series
(1668)~\cite[Section 11.3]{Higham:08},
\begin{equation*}
\log(X) = -2 \sum_{k=0}^{\infty} \frac{1}{2 k + 1} 
\left( \left(I_n - X\right) \left(I_n + X\right)^{-1}   \right)^{2 k + 1}.
\end{equation*}
This series converges if all the eigenvalues of $X$ have strictly
positive real parts, though the rate of convergence may be
slow. Regardless of which series is used, the structure of the
proposed observers remains the same and only the proven region of
convergence changes.

The next elementary result summarizes some important properties of
$\exp$ and $\log$ used in this paper.
\begin{lemma}
\label{lemma:explog_properties}
The exponential and the principle logarithm maps have the following
properties
\begin{itemize}
\item[(a)] For all $X \in \Real^{n \times n}$ with $\sigma(X) \cap
  \Real_{\leq 0} = \varnothing$
\[
\exp(\log(X)) = X,
\]
\item[(b)] For all $A \in B(0_n, \log(2))$,
\[
\log(\exp(A)) = A,
\]
\item[(c)] For all $A \in \Real^{n \times n}$ and all $X \in
  \GL{n,\Real}$,
\[
\exp(X A X^{-1}) = X \exp(A) X^{-1},
\]
\item[(d)] For all $X \in \Real^{n \times n}$ with $\sigma(X) \cap
  \Real_{\leq 0} = \varnothing$, and all $A \in \GL{n, \Real}$ such
  that $\sigma(A X A^{-1}) \cap \Real_{\leq 0} = \varnothing$ and
  $\sigma(A X A^{-1}) \subset \set{z \in \mathbb{C} : -\pi < \IM{(z)}
    < \pi}$
\[
\log(A X A^{-1}) = A \log(X) A^{-1}.
\]
\end{itemize}
\end{lemma}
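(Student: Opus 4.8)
The plan is to treat the four properties separately, since (a) and (c) are essentially immediate, (d) combines the others with the uniqueness clause of Theorem~\ref{thm:plog}, and (b) is the only one requiring genuine work.

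Property (a) I would dispatch as a restatement of the definition. By Theorem~\ref{thm:plog}, when $\sigma(X) \cap \Real_{\leq 0} = \varnothing$ the matrix $\log(X)$ is by construction a logarithm of $X$ in the sense of Definition~\ref{def:mlog}, so $\exp(\log(X)) = X$ holds by definition. Property (c) I would prove directly from the series~\eqref{eq:exponential_def}: a telescoping argument gives $(X A X^{-1})^k = X A^k X^{-1}$ for every $k$, and since the linear map $Z \mapsto X Z X^{-1}$ on $\Real^{n \times n}$ is bounded, hence continuous, it commutes with the convergent sum, yielding $\exp(X A X^{-1}) = \sum_k X A^k X^{-1}/k! = X \exp(A) X^{-1}$.

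For (b) the role of the radius $\log(2)$ is to place $\exp(A)$ inside $B(I_n,1)$, where the logarithm series~\eqref{eq:logarithm_def} converges. Indeed, for $\|A\| < \log(2)$,
\[
\|\exp(A) - I_n\| = \Bigl\| \sum_{k=1}^{\infty} \frac{A^k}{k!} \Bigr\| \leq \sum_{k=1}^{\infty} \frac{\|A\|^k}{k!} = e^{\|A\|} - 1 < 2 - 1 = 1,
\]
so $\exp(A) \in B(I_n,1) \subset \GL{n,\Real}$ and $\log(\exp(A))$ is well-defined. It then remains to verify the composition identity $\log(\exp(A)) = A$. I would obtain this from the fact that the formal power series for $\log$ and $\exp$ are compositional inverses: the scalar identity $\log(e^x) = x$ holds for all sufficiently small real $x$, which forces the composite formal power series to reduce to the identity, and this formal identity lifts to the matrix setting because the above norm bounds guarantee absolute convergence and hence permit the rearrangement of terms. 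Alternatively, one can argue by uniqueness: the spectral radius bound $\rho(A) \leq \|A\| < \log(2) < \pi$ shows that $\sigma(A)$ lies in the strip $\set{z \in \Complex : -\pi < \IM(z) < \pi}$ and that $\exp(A)$ has no eigenvalue on $\Real_{\leq 0}$, so $A$ is a real logarithm of $\exp(A)$ with spectrum in the strip, whence $A = \log(\exp(A))$ by the uniqueness clause of Theorem~\ref{thm:plog}.

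Finally, for (d) I would set $B \coloneqq A \log(X) A^{-1}$ and show it is the principle logarithm of $A X A^{-1}$. Applying (c) and then (a) gives $\exp(B) = A \exp(\log(X)) A^{-1} = A X A^{-1}$, so $B$ is a real logarithm of $A X A^{-1}$; moreover $\sigma(B) = \sigma(\log(X))$ lies in the strip $\set{z \in \Complex : -\pi < \IM(z) < \pi}$ by Theorem~\ref{thm:plog} applied to $X$. Since by hypothesis $\sigma(A X A^{-1}) \cap \Real_{\leq 0} = \varnothing$, the principle logarithm $\log(A X A^{-1})$ exists and is the unique real logarithm with spectrum in the strip; as $B$ has exactly these properties, uniqueness forces $B = \log(A X A^{-1})$. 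I expect (b) to be the only genuine obstacle: (a), (c), and (d) are short once the uniqueness clause of Theorem~\ref{thm:plog} and property (c) are in hand, whereas (b) requires both the convergence estimate above and the slightly delicate step of lifting the scalar compositional-inverse identity to matrices.
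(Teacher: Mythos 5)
Your proposal is correct and, for parts (a), (c), and (d), follows essentially the same route as the paper: (a) from the uniqueness clause of Theorem~\ref{thm:plog}, (c) by substituting $(XAX^{-1})^k = XA^kX^{-1}$ into the exponential series, and (d) by exhibiting $A\log(X)A^{-1}$ as a real logarithm of $AXA^{-1}$ whose spectrum lies in the strip and invoking uniqueness (you make the spectral check $\sigma(A\log(X)A^{-1}) = \sigma(\log(X))$ explicit, which the paper leaves implicit). The only divergence is part (b), which the paper does not prove at all but delegates to~\cite[Theorem 2.2.1]{Faraut:08} and~\cite{Ros02}; your argument --- the bound $\|\exp(A)-I_n\| \leq e^{\|A\|}-1 < 1$ placing $\exp(A)$ in $B(I_n,1)$, followed by the uniqueness route using $\rho(A) \leq \|A\| < \log(2) < \pi$ to keep $\sigma(A)$ in the strip and $\sigma(\exp(A))$ off $\Real_{\leq 0}$ --- is a complete and correct substitute, and is cleaner than the formal-power-series rearrangement you sketch as your first option.
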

\begin{proof}
  Property (a) is a direct consequence of Definition~\ref{def:mlog}
  and Theorem~\ref{thm:plog}. To see this note that since $\sigma(X)
  \cap \Real_{\leq 0} = \varnothing$, there is a unique $A \in
  \Real^{n \times n}$ with $\sigma{(A)} \subset \set{z \in \mathbb{C}
    : -\pi < \IM{(z)} < \pi}$ such that $X = \exp(A)$, namely, $A =
  \log{(X)}$. The proof of property (b) is standard and can be found
  in~\cite[Theorem 2.2.1]{Faraut:08} or~\cite{Ros02}. To see that (c)
  holds, note that $(X A X^{-1})^k = X A^k X^{-1}$.  Substituting this
  identity into the series definition~\eqref{eq:exponential_def} of
  the exponential map gives the desired result.

  To show that (d) holds, first by property (a) and the hypothesis on
  $AXA^{-1}$
\[
\exp{(\log(A X A^{-1}))} = AXA^{-1}.
\]
Second, by properties (a) and (c) and the hypothesis on
  $X$
\[
\exp{(A\log(X)A^{-1})} = A\exp{(\log(X))}A^{-1} = AXA^{-1}.
\]
By Theorem~\ref{thm:plog} and the assumption on $AXA^{-1}$, the
equation
\[
\exp{(C)} = AXA^{-1}
\]
has a unique real solution $C$ with $\sigma(C) \subset \set{z \in
  \mathbb{C} : -\pi < \IM{(z)} < \pi}$. Therefore, we conclude that
$\log(A X A^{-1}) = A \log(X) A^{-1}$ as required.
\end{proof}
The next result is a direct consequence of~\cite[Theorem 1]{Wou65}. It
plays an important role in our analysis.
\begin{theorem}[\cite{Wou65}]
  Let $A \in \Real^{n \times n}$ be such that $\sigma(A) \cap
  \Real_{\leq 0} = \varnothing$. Then $\log{(A)}$ commutes with any
  matrix that commutes with $A$.
\label{thm:logcommute}
\end{theorem}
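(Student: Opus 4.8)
The plan is to prove the commutation first in the special case that $A$ is close to the identity, where the series definition~\eqref{eq:logarithm_def} of $\log$ is directly available, and then to lift the argument to an arbitrary $A$ with $\sigma(A) \cap \Real_{\leq 0} = \varnothing$ by passing to the holomorphic functional calculus. Throughout, let $B \in \Real^{n \times n}$ be an arbitrary matrix satisfying $BA = AB$.

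Suppose first that $A \in B(I_n, 1)$, so that $\log(A) = \sum_{k=1}^{\infty} \frac{(-1)^{k+1}}{k}(A - I_n)^k$ by~\eqref{eq:logarithm_def}. Since $BA = AB$ we have $B(A - I_n) = (A - I_n)B$, hence $B$ commutes with every power $(A - I_n)^k$ and therefore with each partial sum $S_N \coloneqq \sum_{k=1}^{N} \frac{(-1)^{k+1}}{k}(A - I_n)^k$, that is, $B S_N = S_N B$ for all $N$. Because this series converges in norm on $B(I_n,1)$ and left- and right-multiplication by the fixed matrix $B$ are continuous on $\Real^{n \times n}$, letting $N \to \infty$ yields $B \log(A) = \log(A) B$, settling the near-identity case.

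For general $A$ the series need not converge, so I would instead represent the principle logarithm through the holomorphic functional calculus. Because $\sigma(A) \cap \Real_{\leq 0} = \varnothing$, the principle branch of the logarithm is holomorphic on the slit plane $\Complex \setminus \Real_{\leq 0}$, which contains $\sigma(A)$, and one may choose a contour $\Gamma$ lying in $\Complex \setminus \Real_{\leq 0}$ that winds once around each eigenvalue of $A$. The functional calculus then supplies the representation
\[
\log(A) = \frac{1}{2\pi i}\oint_{\Gamma}\log(z)\,(z I_n - A)^{-1}\,dz ,
\]
which agrees with the principle logarithm of Theorem~\ref{thm:plog}: both coincide with the series on $B(I_n,1)$, and both yield the unique real logarithm with spectrum in the strip $\set{z \in \Complex : -\pi < \IM{(z)} < \pi}$. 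From $BA = AB$ it follows that $B$ commutes with $z I_n - A$, and hence with the resolvent $(z I_n - A)^{-1}$, for every $z \in \Gamma$. Multiplying the integral on the left by $B$, pulling $B$ through by linearity, and commuting it past the resolvent then gives $B \log(A) = \log(A) B$. Equivalently, one may invoke the standard fact that a primary matrix function is a polynomial in its argument, so $\log(A) \in \Complex[A]$, whence $\log(A)$ commutes with everything that commutes with $A$.

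I expect the only genuine obstacle to be the identification of the contour-integral object with the principle logarithm as defined in the paper; this is where the hypothesis $\sigma(A) \cap \Real_{\leq 0} = \varnothing$ is used, both to place $\Gamma$ in the domain of holomorphy and, via the uniqueness in Theorem~\ref{thm:plog}, to pin down the branch. It is worth noting that property~(d) of Lemma~\ref{lemma:explog_properties} cannot be applied directly here, since it additionally requires $\sigma(A)$ to lie in the strip $\set{z \in \Complex : -\pi < \IM{(z)} < \pi}$, a condition not guaranteed for a general $A$ satisfying only $\sigma(A) \cap \Real_{\leq 0} = \varnothing$. This is precisely why the near-identity series argument does not by itself extend, and why the functional-calculus representation is the natural tool.
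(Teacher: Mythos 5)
Your argument is correct, but it is worth noting that the paper does not prove this statement at all: Theorem~\ref{thm:logcommute} is simply imported from~\cite[Theorem 1]{Wou65}, so there is no internal proof to compare against. What you supply is a self-contained justification, and of your three threads the last one is the decisive one: the fact that a primary matrix function $f(A)$ equals $p(A)$ for a (Hermite interpolating) polynomial $p$ depending on $A$ immediately gives that $\log(A)$ commutes with everything commuting with $A$, and it makes both the near-identity series computation and the contour integral logically redundant (the series case is subsumed by the general one, so presenting it separately buys nothing beyond intuition). The Riesz--Dunford route is also fine, and you are right that Lemma~\ref{lemma:explog_properties}(d) is not applicable here; the one point you gloss over is the identification of the contour integral with the \emph{principle} logarithm of Theorem~\ref{thm:plog}, which requires checking that the integral produces a \emph{real} matrix. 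For real $A$ this follows by choosing $\Gamma$ symmetric about the real axis and using $\log(\bar z)=\overline{\log(z)}$ on $\Complex\setminus\Real_{\leq 0}$, after which the spectral-mapping theorem places $\sigma(\log(A))$ in the strip $\set{z\in\Complex : -\pi<\IM{(z)}<\pi}$ and uniqueness pins down the branch. With that one sentence added, your proof is complete and, if anything, more informative than the paper's bare citation.
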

%

If $\g \subseteq \GL{n,\Real}$ is any linear Lie group, then a
consequence of Lemma~\ref{lemma:explog_properties}, and of the
definition of $\Lie(\g)$, the map $\exp : \Lie(\g) \to \g$ is a local
diffeomorphism of zero in $\Lie(\g)$ onto a \nbhd of $I_n$ in
$\g$. Thus, restricted to a sufficiently small \nbhd $U$ in $\g$ of
$I_n$, the matrix logarithm $\log : U \subset \g \to \Lie(\g)$ is the
inverse of $\exp : \Lie(\g) \to \g$. Furthermore $\g \cap B(I_n ,1)
\subseteq U$.

\subsection{The tangent space of a linear Lie group}
By definition a Lie group is a differentiable manifold. Therefore we
can define the tangent space at a point in the group as an equivalence
class of curves.
\begin{definition}
  Let $\g$ be a linear Lie group and let $X \in \g$. A curve at $X$ is
  a $C^1$ map $c : I \to \g$, $t \mapsto c(t)$, from an open interval
  $I \subseteq \Real$ with $0 \in I$ and $c(0) = X$. Let $c_1$ and
  $c_2$ be two curves at $X$ and $(W, \psi)$, $W \subseteq \g$, a
  chart on $\g$ with $X \in W$. Then, $c_1$ and $c_2$ are tangent at
  $X$ with respect to $\psi$ if
\[
\left.\frac{\D \left(\psi \circ c_1\right)}{\D t}\right|_{t = 0} =
\left.\frac{\D \left(\psi \circ c_2\right)}{\D t}\right|_{t = 0}.
\]
\end{definition}
In other words, two curves are tangent at a point $X \in \g$ if their
tangent vectors in local coordinates are equal. Tangency at $X$ is a
coordinate-independent notion and defines an equivalence relation
among curves at $X$. Let $\left[c\right]_X$ denote one such
equivalence class. 
\begin{definition}
  Let $\g$ be a linear Lie group and let $X \in \g$. The tangent space
  to $\g$ at $X$, $T_X\g$ is the set of equivalence classes at $X$:
\[
T_X \g \coloneqq \{ \left[c\right]_X : \text{$c$ is a curve at $X$}\}.
\]
Each equivalence class $\left[c\right]_X$ is a tangent vector at $X$.
\end{definition}
In the case of linear Lie groups the set of equivalence classes
$\left[c\right]_X$ can be characterized in a particularly simple
manner.
\begin{proposition}
\label{prop:lie_tangent_space}
Let $\g$ be a linear Lie group and let $X \in \g$. Then
\begin{equation*}
\begin{aligned}
T_X \g &= X \Lie(\g) \coloneqq \{ X A : A \in \Lie(\g) \} \\
&= \Lie(\g) X \coloneqq \{ A X : A \in \Lie(\g) \}  . 
\end{aligned}
\end{equation*}
\end{proposition}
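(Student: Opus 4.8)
The plan is to first identify the abstract tangent space with concrete velocity matrices, then prove the statement at the identity, and finally transport to an arbitrary $X$ by translation. Since $\g$ is an embedded submanifold of $\Real^{n\times n}$, the inclusion $\iota : \g \hookrightarrow \Real^{n\times n}$ lets me represent each equivalence class $[c]_X$ by the ordinary matrix derivative $\dot{c}(0) = \left.\frac{\D c}{\D t}\right|_{t=0} \in \M{n,\Real}$; this assignment is well defined precisely because tangency of two curves at $X$ is equality of these ambient derivatives. Under this identification I treat $T_X\g$ as the subset of $\M{n,\Real}$ consisting of the velocities at $X$ of all curves in $\g$ through $X$.

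First I would establish the special case $T_{I_n}\g = \Lie(\g)$. The inclusion $\Lie(\g) \subseteq T_{I_n}\g$ is immediate: given $A \in \Lie(\g)$, the curve $c(t) = \exp(tA)$ lies in $\g$ by Definition~\ref{def:liealg}, satisfies $c(0) = I_n$, and has velocity $\dot{c}(0) = A$. For the reverse inclusion I would invoke the fact recorded just before this subsection, namely that $\exp : \Lie(\g) \to \g$ restricts to a diffeomorphism of a \nbhd of $0$ in $\Lie(\g)$ onto a \nbhd of $I_n$ in $\g$. Differentiating the series~\eqref{eq:exponential_def} term by term along $t \mapsto tA$ shows that the differential of this map at $0$ is the inclusion $A \mapsto A$ of $\Lie(\g)$ into $\M{n,\Real}$; being a local diffeomorphism, it carries $\Lie(\g)$ isomorphically onto $T_{I_n}\g$, forcing $T_{I_n}\g = \Lie(\g)$. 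Equivalently, $T_{I_n}\g$ is a real vector space containing $\Lie(\g)$ and of the same dimension as $\g$, hence equal to it.

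Next I would transport this to an arbitrary $X \in \g$ by left translation. The map $L_X : \g \to \g$, $g \mapsto Xg$, is a diffeomorphism (its smooth inverse is $L_{X^{-1}}$) with $L_X(I_n) = X$, so its differential $\D L_X : T_{I_n}\g \to T_X\g$ is a linear isomorphism. Because $L_X$ is the restriction to $\g$ of the linear map $M \mapsto XM$ on $\M{n,\Real}$, this differential is simply $A \mapsto XA$; concretely, if $c$ is a curve at $I_n$ then $X c(\cdot)$ is a curve at $X$ with velocity $X\dot{c}(0)$, and conversely $X^{-1}\tilde{c}(\cdot)$ recovers a curve at $I_n$ from any curve $\tilde{c}$ at $X$. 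Hence $T_X\g = X\,T_{I_n}\g = X\Lie(\g)$. Repeating the argument with the right translation $R_X : g \mapsto gX$, whose differential is $A \mapsto AX$, yields $T_X\g = \Lie(\g)X$. As an alternative check that the two descriptions coincide, property (c) of Lemma~\ref{lemma:explog_properties} gives $\exp(t\,\Ad_X(A)) = X\exp(tA)X^{-1} \in \g$ for $A \in \Lie(\g)$, so $\Ad_X$ preserves $\Lie(\g)$ and $X\Lie(\g) = \Lie(\g)X$.

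The only genuinely nontrivial step is the reverse inclusion $T_{I_n}\g \subseteq \Lie(\g)$, i.e.\ that every ambient velocity of a curve in $\g$ through $I_n$ in fact generates a one-parameter subgroup remaining in $\g$. I expect this to be the main obstacle, and I would discharge it entirely by appealing to the closed-subgroup and local-diffeomorphism facts already recorded in the excerpt rather than reproving them; once $T_{I_n}\g = \Lie(\g)$ is in hand, the passage to a general $X$ via translations is purely formal.
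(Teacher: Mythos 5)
Your proof is correct, and it reaches the proposition by a genuinely different organization than the paper's. The paper works directly at the point $X$: the inclusion $X\Lie(\g) \subseteq T_X\g$ comes from differentiating the curve $t \mapsto X\exp(tA)$, and the reverse inclusion comes from writing an arbitrary curve $c$ at $X$ as $c(t) = X\exp(\beta(t))$ with $\beta(t) = \log(X^{-1}c(t)) \in \Lie(\g)$ and differentiating the exponential series term by term to obtain $\dot{c}(0) = X\dot{\beta}(0)$; the identity $X\Lie(\g) = \Lie(\g)X$ is then derived separately from the $\Ad_X$-invariance of $\Lie(\g)$ via Lemma~\ref{lemma:explog_properties}(c). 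You instead factor the argument through the identity, establishing $T_{I_n}\g = \Lie(\g)$ from the fact that $\exp$ is a local diffeomorphism whose differential at $0_n$ is the inclusion, and then transport by the left and right translations, whose differentials are $A \mapsto XA$ and $A \mapsto AX$. Both arguments ultimately rest on the same ingredient --- the local invertibility of $\exp : \Lie(\g) \to \g$ recorded just before the proposition --- but yours replaces the paper's explicit series computation with the abstract statement that a diffeomorphism induces an isomorphism of tangent spaces. This buys modularity and yields $T_X\g = \Lie(\g)X$ symmetrically rather than as a corollary of $X\Lie(\g)X^{-1} = \Lie(\g)$; the paper's version buys self-containedness, since everything reduces to differentiating power series in the ambient space $\M{n,\Real}$. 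If you write yours up, make explicit that the identification of $\D L_X$ with $A \mapsto XA$ uses the fact that $L_X$ extends to a linear map of the ambient space and that the ambient representation of tangent vectors by matrix velocities is natural under such extensions --- which is exactly the curve computation you sketch parenthetically, so no gap remains.
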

\begin{proof}
By Lemma~\ref{lemma:explog_properties}, part (c), we have that 
\[
\left( \forall X \in \g \right) \; 
\left( \forall A \in \Lie(\g) \right) \; 
\left( \forall t \in \Real \right) \; \hspace{3mm} 
X \exp(tA) X^{-1} = \exp(t X A X^{-1}) .
\]
Thus, by Definition~\ref{def:liealg}, $X \Lie(\g) X^{-1} \subseteq
\Lie(\g)$. Using an identical argument, replacing $X$ with $X^{-1}$,
one verifies that $X^{-1} \Lie(\g) X \subseteq \Lie(\g)$, which
implies $\Lie(\g) \subseteq X \Lie(\g) X^{-1}$. Therefore, we have
shown that $X \Lie(\g) X^{-1} = \Lie(\g)$, which proves that $X
\Lie(\g) = \Lie(\g) X$.

Now, for any $A \in \Lie(\g)$, the curve $c(t) = X \exp(t A)$ is
smooth with $\gamma(0) = X$, hence $\dot{\gamma}(0) \in T_X \g$.
Computing the derivative of the curve
\[
\dot{\gamma}(0) = \frac{\D}{\D t} X \exp(t A) \big|_{t=0} = X A ,  
\]
which shows that $X A \in T_X \g$. Since our choice of 
$A \in \Lie(\g)$ was arbitrary, we have that 
$X \Lie(\g) \subseteq T_X \g$. 

Conversely, for any $B \in T_X \g$, by definition there exists a smooth
curve $c : I \to \g$, with $c(0) = X$ and $\dot{c}(0) = B$.  For
small $t \in I$, $|t|$ sufficiently small, define the curve
\[
\begin{aligned}
\beta : & \; I \to \Lie(\g)\\
& \; t \mapsto \log(X^{-1} c(t)) , 
\end{aligned}
\]
which satisfies $\beta(0) = 0_n$.  Furthermore, since $\Lie(\g)$ is a
vector space, we have that $\dot{\beta}(0) \in T_{\beta(0)}\Lie(\g)
\simeq \Lie(\g)$.  For small $\left| t \right|$, the curve $c$
is given by
\[
c(t) = X \exp(\beta(t)).
\]
Computing $\dot{c}(0)$, and using the fact that $\beta(0) = 0$, we get
\begin{equation*}
\begin{aligned}
\dot{\gamma}_B(0)
&= X \frac{\D}{\D t} \left[ \sum_{k=0}^{\infty} \frac{\beta(t)^k}{k!} \right]
\Bigg|_{t=0} \\
&= X \frac{\D}{\D t} \left[ I_n + \beta(t) + \frac{\beta(t)^2}{2} + \cdots \right] 
\Bigg|_{t=0} \\
&= X \left[\dot{\beta}(t) + \frac{\dot{\beta}(t) \beta(t) + \beta(t) \dot{\beta}(t)}{2} + 
\cdots \right] \Bigg|_{t=0} \\
&= X  \dot{\beta}(0) .
\end{aligned}
\end{equation*}
Therefore $B = \dot{c}(0) = X \dot{\beta}(0)$. However, since
$\dot{\beta}(0) \in \Lie(\g)$, we conclude that $B \in X \Lie(\g)$.
Since our choice of $B \in T_X \g$ was arbitrary, we have that $T_X \g
\subseteq X \Lie(\g)$.
\end{proof}

\section{Problem Statements}
Partially motivated by the discussion from
Section~\ref{sec:motivation} we introduce the two problems studied in
this paper. The first problem deals with kinematic systems on linear
Lie groups while the second relates to dynamic systems on linear Lie
groups.

\subsection{Full state observers}
\label{sec:fullproblem}
Let $\g \subseteq \GL{n,\Real}$ be a linear Lie group.  Consider the
following system on $\g$
\begin{equation}
\begin{aligned}
\dot{X} &= X u \\
Y &= X ,
\label{eq:leftinv_system_def}
\end{aligned}
\end{equation}
where $u: \Real \rightarrow \Lie(\g)$ is the control input to the
system, and $Y \in \g$ is the measured output of the
system. System~\eqref{eq:leftinv_system_def} is left-invariant.  This
means that, for any fixed matrix $A \in \g$, if we redefine the state
as $Z \coloneqq A X$, then the new state $Z$ satisfies the same
differential equation as $X$, i.e., $\dot{Z} = Z u$.

In this paper we assume that the control signal $u$ is admissible for
the system~\eqref{eq:leftinv_system_def}.  This means that for any
initial condition $X(0) \in \g$, the corresponding solution
of~\eqref{eq:leftinv_system_def} with the admissible input $u$ is
unique, continuously differentiable and exists for all time.

\begin{assumption}
  The input $u$ to system~\eqref{eq:leftinv_system_def} is such that
  for any initial condition $X(0) \in \g$ the corresponding solution
  $X(t)$ is bounded.
\label{ass:bounded}
\end{assumption}
Assumption~\ref{ass:bounded} is automatically satisfied if the group
$\g$ is compact, for example $\g = \SO{3, \Real}$.

\begin{problem}
  Given a left-invariant system~\eqref{eq:leftinv_system_def} on a
  linear Lie group $\g \subseteq \GL{n,\Real}$ with input $u \in
  \Lie(\g)$ such that Assumption~\ref{ass:bounded} holds,
  design a state estimator with estimate $\hat{X} \in \g$, access to
  $Y \in \g$ and $u \in \Lie(\g)$, such that, for $\hat{X}(0)$
  sufficiently close to $X(0)$, $\hat{X}(t) \longrightarrow X(t)$
  exponentially, as $t \rightarrow \infty$.
\label{prob:kinematic}
\end{problem}
\begin{figure}[ht]
\centering
\includegraphics[width=0.5\textwidth]{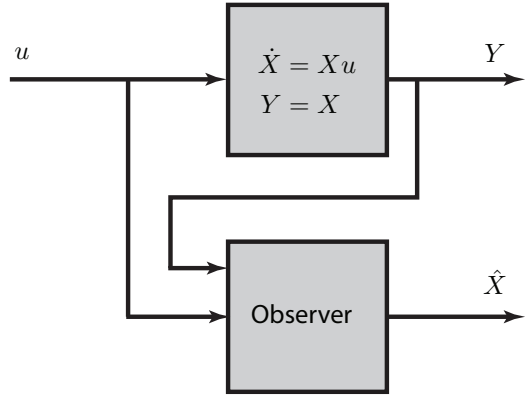}
\caption{Block diagram of the full state observer setup.}
\label{fig:block_diagram}
\hspace{0.5cm}
\end{figure}

We emphasize that the results of this paper can be extended to
right-invariant systems on Lie groups, i.e., systems of the form
\begin{equation*}
\begin{aligned}
\dot{X} &= u X .
\label{eq:rightinv_system_def}
\end{aligned}
\end{equation*}
However, we restrict the discussion to left-invariant systems to avoid
repetition and for clarity.

\subsection{Partial state observers}
\label{sec:partialproblem}
If $\g$ is any linear Lie group then, as we have already seen, its Lie
algebra $\Lie(\g)$, is a vector space and a subalgebra of
$\M{n,\Real}$. Therefore the tangent space at any point $X \in G$ is
isomorphic to the Lie algebra itself.  This implies that if $c: \Real
\rightarrow \Lie(\g)$ is any smooth curve, then its derivative,
$\dot{x}$, is also a curve in $\Lie(\g)$, i.e., $\dot{c}: \Real
\rightarrow \Lie(\g)$.  Consider the following system
\begin{equation}
\begin{aligned}
\dot{X} &= X x_2 \\
\dot{x}_2 &= x_3 \\
&\;\;\vdots \\
\dot{x}_d & = u \\
Y &= X ,
\label{eq:LPSO_system_def}
\end{aligned}
\end{equation}
where $X \in \g$ evolves on a linear Lie group and $x_i \in \Lie(\g)$,
$i \in \left\{2, \ldots, d\right\}$. The input
to~\eqref{eq:LPSO_system_def} is $u: \Real \rightarrow \Lie(\g)$ which
we assume to be a smooth, uniformly bounded and locally Lipschitz
signal of time.

The following assumption, which is almost identical to
Assumption~\ref{ass:bounded} in Section~\ref{sec:fullproblem},
is assumed throughout this paper.
\begin{assumption}
  The input $u$ to system~\eqref{eq:LPSO_system_def} is such that for
  any initial condition $X(0) \in \g$, $x_2(0), \ldots, x_d(0) \in
  \Lie(\g)$, the corresponding solution $(X(t), x_2(t), \ldots,
  x_d(t))$ of~\eqref{eq:LPSO_system_def} is such that $X(t)$ is
  bounded.
\label{ass:dyn_bounded}
\end{assumption}

\begin{problem}
  Given the system~\eqref{eq:leftinv_system_def}, design a state
  estimator with estimate $\hat{X} \in \g$, $\hat{x}_i \in
  \Lie{(\g)}$, $i \in \left\{2,\ldots,d\right\}$, access to the output
  $Y \in \g$ and the input $u \in \Lie(\g)$, such that, under
  Assumption~\ref{ass:dyn_bounded}, if $\|\hat{X}(0) - X(0)\|$,
  $\|\hat{x}_2(0) - x_2(0)\|$, $\ldots$, $\|\hat{x}_d(0) - x_d(0)\|$
  are sufficiently small, then $\|\hat{X}(t) - X(t)\| \rightarrow 0$,
  $\|\hat{x}_2(t) - x_2(t)\| \rightarrow 0$, $\ldots$, $\|\hat{x}_d(t)
  - x_d(t) \| \rightarrow 0$ exponentially, as $t \rightarrow \infty$.
\label{prob:dynamic}
\end{problem}

\section{Proposed observers}
\label{sec:proposed}
In this section we propose various observers that solve
Problems~\ref{prob:kinematic} and~\ref{prob:dynamic}. The analysis of
the observers is presented in Section~\ref{sec:errordyn} where, using
the results of Section~\ref{sec:ODE}, we provide conditions under
which the observers solve Problems~\ref{prob:kinematic}
and~\ref{prob:dynamic}.

\subsection{Local full state observers}
\label{sec:LFSO_observer}
For the system \eqref{eq:leftinv_system_def}, we propose two different
observers, which we call local Lie group Full State Observers (LFSOs).
The first is the passive LFSO, given by
\begin{equation}
\begin{aligned}
\dot{\hat{X}} &= \hat{X} u - a_0 \hat{X} \log(Y^{-1} \hat{X}) .
\label{eq:obs_passive1}
\end{aligned}
\end{equation}
The second is the direct LFSO, given by
\begin{equation}
\begin{aligned}
\dot{\hat{X}} &= Y u Y^{-1} \hat{X} - a_0 \hat{X} \log(Y^{-1}
\hat{X}) .
\label{eq:obs_direct1}
\end{aligned}
\end{equation}
In the above two observers, the constant $a_0 > 0$ is a design
parameter that, as we will show, can be used to change the rate of
observer convergence. Following the terminology
of~\cite{LagTruMah:10}, we call the term $\alpha(\hat{X}, Y) \coloneqq
- a_0 \hat{X} \log(Y^{-1} \hat{X})$, appearing in
\eqref{eq:obs_passive1} and \eqref{eq:obs_direct1}, the innovation
term of the observer. It can be verified that the term $\alpha$
satisfies the definition, given in~\cite[Definition 15]{LagTruMah:10},
of an innovation term.

An intuitive and informal explanation for taking this particular form
of $\alpha(\hat{X}, Y) = - a_0 \hat{X} \log(Y^{-1} \hat{X})$ is that
the matrix $Y^{-1} \hat{X}$ represents a ``measurement error" on the
Lie group $\g$. The Lie group $\g$ is not a vector space and therefore
$Y^{-1} \hat{X}$ is not a vector and should not be added or subtracted
with other matrices. To make the matrix $Y^{-1} \hat{X}$ more
vector-like, we take its logarithm, which maps it to the vector space
$\Lie(\g)$, while preserving all its information (since $\log: G
\rightarrow \Lie(\g)$ is a local diffeomorphism).  We then
push-forward this vector, $\log(Y^{-1} \hat{X})$, from $T_{I_n}G$ to
$T_{\hat{X}}G$, the tangent space at $\hat{X}$, by applying the
push-forward map of left translation.  By
Proposition~\ref{prop:lie_tangent_space}, left translation from
$T_{I_n} G$ to $T_{\hat{X}} G$ is the same as left multiplication by
$\hat{X}$.  The result is a vector in $T_{\hat{X}} G$, which
represents the estimation error. This vector-like estimation error is
then multiplied by the gain $a_0 > 0$ to adjust the rate of observer
convergence.

We call the term $\hat{X} u$ appearing in~\eqref{eq:obs_passive1} and
the term $Y u Y^{-1} \hat{X}$ appearing in~\eqref{eq:obs_direct1}, the
synchronization terms of the observer. The choice of synchronization
term distinguishes the passive observer from the direct observer.

It is computationally costly and inefficient to compute the matrix
logarithm map using the series definitions. Various studies have
looked at the problem of approximating this computation. In particular
the
work~\cite{cheng2001approximating},~\cite{kenney1998schur},~\cite{higham2001evaluating},~\cite{cardoso2006pade}
may be useful for implementing the observers proposed in this
paper. While we do not pursue the notion of using approximations to
the matrix logarithm to implement the observers, we do observe, in
Section~\ref{sec:examples}, that in the special case $\g =
\SO{3,\Real}$ the logarithm can be computed efficiently.

\subsection{Local partial state observers}
\label{sec:LPSO_observer}

For system \eqref{eq:LPSO_system_def}, we propose two different
observers which we call local Lie group Partial State Observers
(LPSOs). The first is the direct LPSO, given by
\begin{equation}
\begin{aligned}
\dot{\hat{X}} &= Y \hat{x}_2 Y^{-1} \hat{X} - a_{d-1} \hat{X} \log(Y^{-1} \hat{X}) \\
\dot{\hat{x}}_2 &= \hat{x}_3 - a_{d-2} \log(Y^{-1} \hat{X}) \\
\vdots \\
\dot{\hat{x}}_{d-1} &= \hat{x}_d - a_{1} \log(Y^{-1} \hat{X}) \\
\dot{\hat{x}}_d &= u - a_{0} \log(Y^{-1} \hat{X})   
\label{eq:LPSO_obs_direct1}
\end{aligned}
\end{equation}
and the second is the passive LPSO, given by
\begin{equation}
\begin{aligned}
\dot{\hat{X}} &= \hat{X} \hat{x}_2 - a_{d-1} \hat{X} \log(Y^{-1} \hat{X}) \\
\dot{\hat{x}}_2 &= \hat{x}_3 - a_{d-2} \log(Y^{-1} \hat{X}) \\
\vdots \\
\dot{\hat{x}}_{d-1} &= \hat{x}_d - a_{1} \log(Y^{-1} \hat{X}) \\
\dot{\hat{x}}_d &= u - a_{0} \log(Y^{-1} \hat{X}) .
\label{eq:LPSO_obs_passive1} 
\end{aligned}
\end{equation}
In the above two observers, the constants $a_0, \hdots, a_{d-1} \in
\Real$ are design parameters, chosen such that the polynomial $p(s) =
s^d + a_{d-1} s^{d-1} + \cdots + a_1 s + a_0$ is Hurwitz.  These
design parameters can be used to modify the rate of convergence of the
estimation error.

We will show that the direct LPSO~\eqref{eq:LPSO_obs_direct1} is
locally exponentially stable for the
system~\eqref{eq:LPSO_system_def}. Unfortunately we are not able to
show exponential stability of the passive
LPSO~\eqref{eq:LPSO_obs_passive1}. However, we will see in simulation
that the passive LPSO works well in the presence of measurement noise.

\section{Estimation Error Functions}

Following~\cite{LagTruMah:10}, we introduce two canonical choices of
estimation error functions for left-invariant systems on Lie groups.
\begin{definition}
\label{def:LRerror}
  Given system~\eqref{eq:leftinv_system_def} with $X \in \g$, and an
  observer with state estimate $\hat{X} \in \g$, the canonical
  left-invariant error, $E_l: \g \times \g \rightarrow \g$, is
\begin{equation*}
E_{l}(X, \hat{X}) \coloneqq X^{-1} \hat{X} 
\label{eq:leftinv_error_def}
\end{equation*}
and the canonical right-invariant error, 
$E_r: \g \times \g \rightarrow \g$, is
\begin{equation*}
E_{r}(X, \hat{X}) \coloneqq \hat{X} X^{-1} .
\label{eq:rightinv_error_def}
\end{equation*}
\end{definition}
The error $E_l$ is called left-invariant because, for any $A \in \g$,
$E_l(A X, A \hat{X}) = X^{-1} A^{-1} A \hat{X} = X^{-1} \hat{X} =
E_l(X, \hat{X})$.  Similarly, $E_r$ is called right-invariant since,
for any $A \in \g$, $E_r(X A, \hat{X} A) = E_r(X, \hat{X})$. From the
definitions of $E_l$ and $E_r$, it is apparent that
\begin{equation}
\label{eq:ElEr}
E_r = \Ad_X(E_l) = X E_l X^{-1}
\end{equation}
where, for any fixed $X \in \g$, $\Ad_X: \g \rightarrow \g$ is a global
diffeomorphism.


In Problems~\ref{prob:kinematic} and~\ref{prob:dynamic} we seek to
design observers so that $\| \hat{X} - X \| \rightarrow 0$
exponentially. To characterize this property we rely on the following
result.

\begin{proposition}
\label{prop:norm_convergence}
Suppose that $X : \Real \to \g$ is bounded. If either $E_r \rightarrow
I_n$ exponentially, or $E_l \rightarrow I_n$ exponentially, as $t
\rightarrow \infty$, then $\hat{X} \rightarrow X$ exponentially, as $t
\rightarrow \infty$.
\end{proposition}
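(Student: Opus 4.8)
The plan is to treat the two hypotheses separately, in each case algebraically solving for $\hat{X}$ in terms of $X$ and the relevant error, and then bounding $\|\hat{X} - X\|$ directly using submultiplicativity of the induced norm together with the boundedness of $X$. Recall that a curve converges to a limit exponentially precisely when its distance to the limit is dominated by $c\,e^{-\lambda t}$ for some constants $c, \lambda > 0$; so it suffices to produce such a bound on $\|\hat{X}(t) - X(t)\|$.

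First I would handle the left-invariant case. Since $E_l = X^{-1}\hat{X}$ and $X \in \g \subseteq \GL{n,\Real}$ is invertible, we may rearrange this as $\hat{X} = X E_l$, whence
\[
\hat{X} - X = X E_l - X = X(E_l - I_n).
\]
Taking norms and applying submultiplicativity gives $\|\hat{X} - X\| \le \|X\|\,\|E_l - I_n\|$. By hypothesis $\|E_l - I_n\| \le c\,e^{-\lambda t}$, and by boundedness of $X$ there is $M > 0$ with $\|X(t)\| \le M$ for all $t$; hence $\|\hat{X} - X\| \le M c\,e^{-\lambda t}$, which is the desired exponential bound.

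The right-invariant case is entirely analogous but requires factoring on the other side so that again only $\|X\|$, and not $\|X^{-1}\|$, appears. From $E_r = \hat{X} X^{-1}$ I would write $\hat{X} = E_r X$, so that $\hat{X} - X = (E_r - I_n) X$ and therefore $\|\hat{X} - X\| \le \|E_r - I_n\|\,\|X\| \le M c\,e^{-\lambda t}$, again exponential.

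The computation is short and no step is genuinely hard; the one point that requires care---and the reason the identity $E_r = X E_l X^{-1}$ from~\eqref{eq:ElEr} is \emph{not} used here---is the choice of which side to factor out $X$. Factoring so that only the bounded quantity $\|X\|$ appears avoids any need to control $\|X^{-1}\|$, about which the hypotheses say nothing. I expect this to be the only subtlety worth flagging.
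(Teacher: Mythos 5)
Your decomposition is exactly the one the paper uses: both arguments start from the identities $\hat{X} - X = X(E_l - I_n)$ and $\hat{X} - X = (E_r - I_n)X$ and then apply submultiplicativity together with the uniform bound on $\|X\|$. The only substantive difference lies in what each proof takes ``exponentially'' to mean, and hence in whether $\|X^{-1}\|$ is needed. You read the hypothesis as a raw decay estimate $\|E_l(t) - I_n\| \leq c\,e^{-\lambda t}$ (respectively for $E_r$), and under that reading one multiplication by $\|X\|$ does finish the job, so your proof is complete and your remark that $\|X^{-1}\|$ never enters is correct. The paper instead works with local exponential stability in the Lyapunov sense: the hypothesis is $\|E_r(t) - I_n\| < m\,e^{-\alpha t}\|E_r(0) - I_n\|$ for all $E_r(0)$ in some ball $B(I_n,\delta)$, and the conclusion it wants (to match the phrasing of Problem~\ref{prob:kinematic}) is a bound on $\|\hat{X}(t) - X(t)\|$ proportional to $\|\hat{X}(0) - X(0)\|$, valid whenever that initial error is sufficiently small. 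Translating the basin condition and the right-hand side from the error variable back into $\hat{X}(0) - X(0)$ requires the reverse inequalities $\|E_l - I_n\| \leq \|X^{-1}\|\,\|\hat{X} - X\|$ and $\|E_r - I_n\| \leq \|\hat{X} - X\|\,\|X^{-1}\|$, and therefore a uniform bound on $\|X^{-1}\|$, which the paper extracts from the boundedness of $X$. So your proof establishes the proposition as literally stated, and is cleaner for it; but the step you flag as avoidable is precisely the part of the paper's argument that upgrades the conclusion to the quantitative, initial-condition-dependent form used in the subsequent corollaries. If you intend your version to feed into those corollaries, you should add the reverse inequalities and the bound on $\|X^{-1}\|$.
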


\begin{proof}
  For any $X$, $\hat{X} \in \g$, using Definition~\ref{def:LRerror},
  the following identities hold
\begin{equation*}
\begin{aligned}
\hat{X} - X &= X (X^{-1} \hat{X} - I_n) = X (E_l - I_n) \\
\hat{X} - X &= (\hat{X} X^{-1} - I_n) X = (E_r - I_n) X .
\label{eq:lemma_norm_convergence_identities}
\end{aligned}
\end{equation*}
Taking the norms of these identities, we obtain
\begin{equation}
\begin{aligned}
\| \hat{X} - X \| &\leq \| X \| \|E_l - I_n\| \\
\| \hat{X} - X \| &\leq \| X \| \| E_r - I_n \|  .
\label{eq:lemma_norm_convergence_inequalities}
\end{aligned}
\end{equation}
Additionally, for any $ X, \hat{X} \in G$,
\begin{equation*}
\begin{aligned}
(E_l - I_n) &= X^{-1} (\hat{X} - X) \\
(E_r - I_n) &= (\hat{X} - X) X^{-1} 
\label{eq:lemma_norm_convergence_identities2}
\end{aligned}
\end{equation*}
so that
\begin{equation}
\begin{aligned}
\| E_l - I_n \| &\leq \| X^{-1} \| \| \hat{X} - X \| \\
\| E_r - I_n \| &\leq \| X^{-1} \| \| \hat{X} - X \|.
\label{eq:lemma_norm_convergence_inequalities2}
\end{aligned}
\end{equation}
By hypothesis, $\| X(t) \|$ is uniformly bounded, i.e., $\left(\exists
  K_1 > 0\right)$$\left(\forall t \geq 0\right)$$\|X(t)\| \leq
K_1$. This implies that $X(t)$ evolves on the compact subset
$\mathcal{G} = \left\{X \in \GL{n, \Real} : \|X\| \leq
  K_1\right\}$. Since the matrix inverse map is continuous, the image
of $\mathcal{G}$ under the matrix inverse map is also a compact subset
of $\GL{n,\Real}$. Therefore, $\| X^{-1}(t) \|$ is also uniformly
bounded, i.e., $\left(\exists
  K_2 > 0\right)$$\left(\forall t \geq 0\right)$$\|X^{-1}(t)\| \leq
K_2$.

Now suppose that $\| E_r(t) - I_n \| \rightarrow 0$ exponentially, as
$t \rightarrow \infty$, then by the definition of exponential
stability, we have
\begin{equation*}
\begin{aligned}
  &\left( \exists \delta, m , \alpha > 0 \right) \; \left( \forall
    E_r(0) \in B(I_n, \delta) \right)\left( \forall t \geq 0 \right)
  \; \| E_r(t) - I_n \| < m e^{-\alpha t} \| E_r(0) - I_n\|.
\end{aligned}
\end{equation*}
By the inequalities~\eqref{eq:lemma_norm_convergence_inequalities}, 
and uniform boundedness of $\| X \|$, we have that 
\begin{equation*}
\begin{aligned}
\| E_r - I_n \| < m  \Rightarrow \| \hat{X} - X \| < K_1 m. 
\end{aligned}
\end{equation*}
By the inequalities~\eqref{eq:lemma_norm_convergence_inequalities2}, 
and uniform boundedness of $\| X^{-1} \|$, we have that 
\begin{equation*}
\begin{aligned}
\| \hat{X} - X \| < \frac{\delta}{K_2} \Rightarrow \| E_r - I_n \| < \delta . 
\end{aligned}
\end{equation*}
Combining the above results, we have exponential convergence of 
$\| \hat{X} - X \| \rightarrow 0$, 
\begin{equation*}
\begin{aligned}
  &\left( \exists \delta, m , \alpha > 0 \right) \left(\|
    \hat{X}(0) - X(0) \| < \delta / K_2 \right)\\ &\qquad\qquad \left(
    \forall t \geq 0 \right) \; \| \hat{X}(t) - X(t) \| < K_1 K_2 m
  e^{-\alpha t} \| \hat{X}(0) - X(0)\|.
\end{aligned}
\end{equation*}
The proof for $E_l$ is identical.
\end{proof}

In addition to the error functions $E_l$ and $E_r$, we introduce two
other, closely related, error functions.

\begin{definition}
  For any $E_l \in B(I_n,1)$, the log left-invariant error, $e_l: \g
  \times \g \rightarrow \Lie(\g)$, is
\begin{equation*}
e_l(X, \hat{X}) \coloneqq \log(E_{l}(X, \hat{X})) = \log(X^{-1}
\hat{X}).
\end{equation*}
For any $E_r \in B(I_n,1)$, the log right-invariant error, $e_r: \g
\times \g \rightarrow \Lie(\g)$, is
\begin{equation*}
e_r(X, \hat{X}) \coloneqq \log(E_{r}(X, \hat{X})) = \log(\hat{X}
X^{-1}) .
\end{equation*}
\end{definition}
Since $e_l$ is solely a function of $E_l$, and since $E_l$ is
left-invariant, it follows that $e_l$ is also left-invariant, i.e.
$\forall A \in \g : \; e_l(A X, A \hat{X}) = e_l(X, \hat{X})$. For the
same reasoning, it follows that $e_r$ is right-invariant, i.e.
$\forall A \in \g : \; e_r(X A, \hat{X} A) = e_r(X, \hat{X})$.

The variables $e_l$ and $e_r$ are useful because they are vectors in
$\Lie(\g)$ and they allow us to convert a differential equation on a
Lie group into a differential equation on a vector space. The
disadvantage of $e_l$ and $e_r$ is that they are only defined for
$E_l$, $E_r \in B(I_n,1)$.

\begin{lemma}
\label{lemma:log_error_relationship}
If $E_l, E_r \in B(I_n,1)$, then
\begin{equation*}
\begin{aligned}
e_r &= X e_l X^{-1} .
\label{eq:lemma_logerror_related}
\end{aligned}
\end{equation*}
\end{lemma}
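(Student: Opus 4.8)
The plan is to prove the identity $e_r = X e_l X^{-1}$ by a direct application of the relationship between the invariant errors $E_l$ and $E_r$ established in \eqref{eq:ElEr}, combined with the commutation property of the principle logarithm under conjugation from Lemma~\ref{lemma:explog_properties}, part (d). First I would recall that by definition $e_r = \log(E_r)$ and, from \eqref{eq:ElEr}, that $E_r = X E_l X^{-1} = \Ad_X(E_l)$. Substituting, the goal reduces to showing
\[
\log(X E_l X^{-1}) = X \log(E_l) X^{-1} = X e_l X^{-1}.
\]
This is exactly the statement of Lemma~\ref{lemma:explog_properties}(d) applied with the matrix $E_l$ playing the role of ``$X$'' and the invertible matrix $X$ playing the role of the conjugating matrix ``$A$''.

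The substance of the proof therefore lies in verifying that the three hypotheses of Lemma~\ref{lemma:explog_properties}(d) are met, namely: (i) $\sigma(E_l) \cap \Real_{\leq 0} = \varnothing$; (ii) $\sigma(X E_l X^{-1}) \cap \Real_{\leq 0} = \varnothing$; and (iii) $\sigma(X E_l X^{-1}) \subset \set{z \in \mathbb{C} : -\pi < \IM{(z)} < \pi}$. The key observation that discharges all of these is that conjugation by $X$ is a similarity transformation, so $E_l$ and $X E_l X^{-1} = E_r$ have \emph{identical} eigenvalues. Thus conditions (i) and (ii) are the same condition, and I would establish them from the standing hypothesis $E_l, E_r \in B(I_n, 1)$: if $\|E_l - I_n\| < 1$ then every eigenvalue $\lambda$ of $E_l$ satisfies $|\lambda - 1| < 1$ (since $\lambda - 1$ is an eigenvalue of $E_l - I_n$, whose spectral radius is bounded by its norm), which forces $\RE{(\lambda)} > 0$ and in particular places $\lambda$ off the nonpositive real axis.

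For condition (iii), the same eigenvalue bound $|\lambda - 1| < 1$ localizes each eigenvalue of $E_r$ inside the open disk of radius one centred at $1$, which lies well inside the horizontal strip $-\pi < \IM{(z)} < \pi$; indeed the disk is contained in the region $|\IM{(z)}| < 1 < \pi$. This confirms that $E_r$ has a well-defined principle logarithm and that Lemma~\ref{lemma:explog_properties}(d) applies verbatim to $\log(X E_l X^{-1})$.

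I do not anticipate a serious obstacle here, as the result is essentially a corollary of the conjugation identity \eqref{eq:ElEr} and Lemma~\ref{lemma:explog_properties}(d). The only point requiring genuine care is the bookkeeping on the spectral conditions: one must note that all the eigenvalue hypotheses reduce, via similarity invariance of the spectrum, to the single statement that the eigenvalues of $E_l$ lie in the open unit disk about $1$, which is an immediate consequence of $\|E_l - I_n\| < 1$. Once this is recorded, the chain of equalities $e_r = \log(E_r) = \log(X E_l X^{-1}) = X \log(E_l) X^{-1} = X e_l X^{-1}$ completes the argument.
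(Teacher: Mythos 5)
Your proposal is correct and follows essentially the same route as the paper: apply the conjugation identity \eqref{eq:ElEr} and then Lemma~\ref{lemma:explog_properties}(d) to move the logarithm past the similarity transformation. The only difference is that you explicitly verify the spectral hypotheses of part (d) via the bound $\|E_l - I_n\| < 1$ and similarity invariance of the spectrum, a check the paper's one-line proof leaves implicit; this is a welcome addition rather than a deviation.
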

\begin{proof}
With the help of Lemma
\ref{lemma:explog_properties} (d), we obtain
\begin{equation*}
\begin{aligned}
e_r &= \log(E_r) \\
&= \log(X E_l X^{-1}) \\
&= X \log(E_l) X^{-1} \\
&= X e_l X^{-1} .
\end{aligned}
\end{equation*}
\end{proof}
Finally, in the context of partial state observers, since $x_i$ and
$\hat{x}_i$, for $i = 2, \ldots, d$ are vectors in $\Lie(\g)$, to
quantify the error between $x_i$ and $\hat{x}_i$, we can use
subtraction of vectors
\begin{equation}
\begin{aligned}
e_i \coloneqq x_i - \hat{x}_i , \hspace{3mm} i = 2, \hdots, d
\label{eq:LPSO_error_vec_def} .
\end{aligned}
\end{equation}
Since $x_i$ and $\hat{x}_i$ are elements of the vector space
$\Lie(\g)$, $e_i$ is also an element of $\Lie(\g)$.

\section{Differential Equations on Matrices}
\label{sec:ODE}

In this section we study the properties of a pair of differential
equations on linear Lie groups. These differential equations arrise in
the analysis of the error dynamics associated with the observers
proposed in Section~\ref{sec:proposed}.

\subsection{A Differential Equation on $\GL{n,\Real}$}
\label{sec:LFSO_de_matrix}

Consider the differential equation evolving on $\GL{n,\Real}$ given by
\begin{equation}
\begin{aligned}
\dot{E} &= -a_0 E \log(E) ,
\label{eq:basic_eqn}
\end{aligned}
\end{equation}
where $E \in \GL{n,\Real}$ and $a_0 \in \Real$ is a positive
constant. The equation~\eqref{eq:basic_eqn} arises in the analysis of
the error dynamics associated with the
observers~\eqref{eq:obs_passive1},~\eqref{eq:obs_direct1}. Note that,
by Theorem~\ref{thm:logcommute}, the above equation can be written
$\dot{E} = - a_0 \log(E) E$.


While we have defined~\eqref{eq:basic_eqn} to evolve on the set of all
invertible matrices, $\GL{n,\Real}$.  If $\g \subseteq \GL{n,\Real}$
is any linear Lie group, then the vector field~\eqref{eq:basic_eqn} is
tangent to the submanifold $\g$.  Therefore, the submanifold $\g$ is
positively invariant for~\eqref{eq:basic_eqn}.  To see that the vector
field~\eqref{eq:basic_eqn} is tangent to any linear Lie group $\g$,
suppose that $E(t_0) \in \g$ at some time $t_0 \in \Real$.  Then
$\log(E(t_0)) \in \Lie(\g)$ and left-translation of this vector takes
it to the tangent space to $\g$ at $E(t_0)$, i.e., $E(t_0)
\log(E(t_0)) \in T_{E(t_0)}\g$, by
Proposition~\ref{prop:lie_tangent_space}.  Thus, the vector
field~\eqref{eq:basic_eqn} is such that $\dot{E}(t_0) \in
T_{E(t_0)}\g$.


The crucial property of the differential equation~\eqref{eq:basic_eqn}
is that the matrices $\dot{E}$ and $E$ commute,
i.e., $E \dot{E} = \dot{E} E$. This property is a consequence of
matrices $E$ and $\log(E)$ commuting. Commutativity of
$\dot{E}$ and $E$, combined with the product rule, gives us the
following result.

\begin{lemma}
\label{lemma:deriv_of_product}
Let $E: \Real \rightarrow \GL{n,\Real}$ be a curve in $\GL{n,\Real}$,
such that $E$ and $\dot{E}$ commutes. Then for all positive integers
$k$
%
%
%
\begin{equation*}
\frac{\D}{\D t} \left[ (E-I_n)^k \right] = k \dot{E} (E-I_n)^{k-1} = k (E-I_n)^{k-1} \dot{E} .
\label{eq:deriv_of_product}
\end{equation*}
\end{lemma}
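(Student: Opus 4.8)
The plan is to prove the identity by induction on the positive integer $k$, using the ordinary (matrix) product rule together with the hypothesis that $E$ and $\dot{E}$ commute. The base case $k = 1$ is immediate, since $\frac{\D}{\D t}(E - I_n) = \dot{E}$, which equals $1 \cdot \dot{E}(E - I_n)^{0}$ and $1 \cdot (E - I_n)^{0}\dot{E}$. Before the inductive step, I would record the key observation that the commutativity hypothesis propagates from $E$ to every polynomial in $E$. In particular, from $E\dot{E} = \dot{E}E$ we get
\[
(E - I_n)\dot{E} = E\dot{E} - \dot{E} = \dot{E}E - \dot{E} = \dot{E}(E - I_n),
\]
so $E - I_n$ commutes with $\dot{E}$, and hence so does any power $(E - I_n)^{k}$.

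For the inductive step, assuming the claim for $k$, I would write $(E - I_n)^{k+1} = (E - I_n)^{k}(E - I_n)$ and apply the product rule to obtain
\[
\frac{\D}{\D t}\left[(E - I_n)^{k+1}\right]
= \left(\frac{\D}{\D t}(E - I_n)^{k}\right)(E - I_n) + (E - I_n)^{k}\dot{E}.
\]
The inductive hypothesis turns the first term into $k\dot{E}(E - I_n)^{k-1}(E - I_n) = k\dot{E}(E - I_n)^{k}$, while the commutativity observation rewrites the second term as $\dot{E}(E - I_n)^{k}$. Adding these gives $(k+1)\dot{E}(E - I_n)^{k}$, which is exactly the desired formula at level $k+1$. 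The second equality in the statement, with $\dot{E}$ on the right, then follows by applying the commutativity observation once more to move $\dot{E}$ across $(E - I_n)^{k}$.

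There is no serious obstacle here: the only subtlety is noncommutativity of matrix multiplication, and that is precisely neutralized by the standing hypothesis that $E$ and $\dot{E}$ commute. The single idea that does all the work is that commutativity with $E$ lifts to commutativity with every polynomial in $E$, so that $\dot{E}$ may be freely moved past factors of $(E - I_n)$ throughout the computation. Consequently the proof is a clean induction with no delicate estimates or convergence issues to address.
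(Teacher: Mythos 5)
Your proof is correct and uses the same essential idea as the paper's: that commutativity of $\dot{E}$ with $E$ implies commutativity with $(E-I_n)$ and its powers, so all terms arising from the product rule coincide. The paper expands the derivative of the $k$-fold product directly into $k$ equal terms rather than inducting, but this is merely a difference in packaging; your inductive version is, if anything, slightly more careful in making the commutation step explicit.
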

\begin{proof}
By straight-forward computation, using the product rule and
commutativity of $\dot{E}$ and $E$, we get
\begin{equation*}
\begin{aligned}
\frac{\D}{\D t} \left[ (E-I_n)^{k} \right] &= \frac{\D}{\D t} \left[
\overbrace{ (E-I_n)(E-I_n)\cdots(E-I_n) }^{\text{k times }} \right] \\
&= \overbrace{ \dot{E} (E-I_n)^{k-1} + 
(E-I_n) \dot{E} (E-I_n)^{k-2} + \cdots + (E-I_n)^{k-1} \dot{E} }^{\text{k
times}} \\
&= k \dot{E} (E-I_n)^{k-1} \\
&= k (E-I_n)^{k-1} \dot{E} .
\label{eq:deriv_of_product_proof_1}
\end{aligned}
\end{equation*}
\end{proof}
Lemma~\ref{lemma:deriv_of_product} is the key reason
why~\eqref{eq:basic_eqn} is differentially equivalent to a linear
differential equation.  The change of coordinates that realizes this
equivalence is the matrix logarithm map defined on $B(I_n,1) \subset
\GL{n,\Real}$. By Lemma~\ref{lemma:explog_properties}, the matrix
logarithm map $\log: B(I_n,1) \rightarrow \M{n,\Real}$ is a
diffeomorphism onto its image. Furthermore, the codomain of the $\log$
map is the set $\M{n,\Real}$, which is isomorphic to $\Real^{n^2}$, as
a vector space. Therefore, the $\log$ map is a local coordinate
transformation on $\GL{n,\Real}$, defined on the ball $B(I_n,1)$.

We denote by $e \in \M{n,\Real}$ the $\log$ coordinates
of the matrix $E \in B(I_n,1)$
\begin{equation}
\begin{aligned}
e \coloneqq \log(E) .
\label{eq:log_coordinates}
\end{aligned}
\end{equation}
To express the differential equation~\eqref{eq:basic_eqn} in $\log$
coordinates we differentiate $e$ with respect to time, making use of
Lemma~\ref{lemma:deriv_of_product} and
Proposition~\ref{prop:inverse_def}
\begin{equation*}
\begin{aligned}
\dot{e} &= \frac{\D}{\D t} \log(E)  \\
&= \frac{\D}{\D t} \left[ \sum_{k=1}^{\infty} \frac{(-1)^{k+1}}{k}
(E-I_n)^k \right] \\
&= \sum_{k=1}^{\infty} \frac{(-1)^{k+1}}{k} \frac{\D}{\D t} \left[
(E-I_n)^k \right] \\
&= \sum_{k=1}^{\infty} \frac{(-1)^{k+1}}{k} \left[ k \dot{E}
(E-I_n)^{k-1} \right] \\
&= \sum_{k=0}^{\infty} (-1)^{k} (E-I_n)^{k} \dot{E}  \\
&= E^{-1} \dot{E} \\
&= -a_0 E^{-1} E \log(E) \\
&= -a_0 e .
\label{eq:basic_eqn_log1}
\end{aligned}
\end{equation*}
The above equation, rewritten $\dot{e} = -a_0 e$, is linear with $n^2$
eigenvalues located at $-a_0$. Thus, for any positive constant $a_0 >
0$, the point $e = 0_n$ is an exponentially stable equilibrium of
$\dot{e} = -a_0 e$.  Since stability of an equilibrium is a coordinate
independent property, the equilibrium point $E = I_n$ is also
locally exponentially stable for~\eqref{eq:basic_eqn}.  The above
discussion proves the following.
\begin{lemma}
\label{lemma:basic_eqn}
On the set $E \in B(I_n,1)$, the vector field~\eqref{eq:basic_eqn} is
differentially equivalent to the vector field
\begin{equation}
\begin{aligned}
\dot{e} &= -a_0 e ,
\label{eq:basic_eqn_log}
\end{aligned}
\end{equation}
where $e \coloneqq \log(E)$. If $a_0 > 0$, the equilibrium point $E =
I_n$ of~\eqref{eq:basic_eqn} is locally exponentially stable.
\end{lemma}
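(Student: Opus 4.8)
The plan is to exhibit the logarithm map $e = \log(E)$ as a change of coordinates that transforms~\eqref{eq:basic_eqn} into the linear equation~\eqref{eq:basic_eqn_log}, and then to invoke the coordinate-independence of exponential stability. First I would record that on $B(I_n,1)$ the map $\log$ is a diffeomorphism onto its image, by Lemma~\ref{lemma:explog_properties}, so that it constitutes a legitimate local coordinate transformation and the defining series~\eqref{eq:logarithm_def} converges there. The task then reduces to computing $\dot{e} = \frac{\D}{\D t}\log(E)$ along solutions of~\eqref{eq:basic_eqn} and checking that it equals $-a_0 e$.

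The central computation differentiates the series $\log(E) = \sum_{k=1}^{\infty}\frac{(-1)^{k+1}}{k}(E-I_n)^k$ term by term. To differentiate each power $(E-I_n)^k$ I would apply Lemma~\ref{lemma:deriv_of_product}, whose hypothesis is that $E$ and $\dot{E}$ commute. This commutativity is the key structural fact driving the whole argument: since $\dot{E} = -a_0 E\log(E)$ and, by Theorem~\ref{thm:logcommute}, $\log(E)$ commutes with $E$, it follows that $E\dot{E} = \dot{E}E$. Lemma~\ref{lemma:deriv_of_product} then gives $\frac{\D}{\D t}(E-I_n)^k = k\dot{E}(E-I_n)^{k-1}$, so the $\frac{1}{k}$ factors cancel and the series collapses to $\left(\sum_{k=0}^{\infty}(-1)^k(E-I_n)^k\right)\dot{E} = E^{-1}\dot{E}$, where the last equality is the Neumann series of Proposition~\ref{prop:inverse_def}. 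Substituting $\dot{E} = -a_0 E\log(E)$ yields $\dot{e} = -a_0 E^{-1}E\log(E) = -a_0\log(E) = -a_0 e$, establishing the differential equivalence.

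With the differential equivalence in hand, the stability claim is immediate. The transformed system $\dot{e} = -a_0 e$ on $\M{n,\Real}$, viewed as $\Real^{n^2}$, has all eigenvalues equal to $-a_0$, so for $a_0 > 0$ the origin $e = 0_n$ is exponentially stable. Because $\log$ is a diffeomorphism carrying $I_n$ to $0_n$, and exponential stability of an equilibrium is preserved under a diffeomorphic change of coordinates, I would conclude that $E = I_n$ is locally exponentially stable for~\eqref{eq:basic_eqn}; the qualifier \emph{local} reflects that the chart is only defined on $B(I_n,1)$.

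The main obstacle I anticipate is the rigorous justification of the term-by-term differentiation, that is, interchanging $\frac{\D}{\D t}$ with the infinite sum $\sum_k$. This requires uniform convergence of the differentiated series on compact time intervals, which follows from submultiplicativity of the induced norm together with the bound $\|E-I_n\| < 1$, but it is the step that must be handled carefully rather than treated as formal symbol-pushing. A secondary point requiring attention is verifying that the commutativity hypothesis of Lemma~\ref{lemma:deriv_of_product} persists for all $t$ along a trajectory; this is guaranteed by Theorem~\ref{thm:logcommute} so long as the solution remains in $B(I_n,1)$, where $\log(E)$ is defined.
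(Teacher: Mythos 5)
Your proposal follows essentially the same route as the paper: verify that $E$ and $\dot{E}$ commute via Theorem~\ref{thm:logcommute}, differentiate the logarithm series term by term using Lemma~\ref{lemma:deriv_of_product}, collapse the result to $E^{-1}\dot{E}$ via Proposition~\ref{prop:inverse_def}, and conclude local exponential stability of $E=I_n$ from coordinate-independence of stability under the diffeomorphism $\log$. Your added remarks on justifying the term-by-term differentiation and on the persistence of commutativity along trajectories are sensible refinements of the paper's (more formal) computation, not a different argument.
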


The solution, $E(t)$, of the differential
equation~\eqref{eq:basic_eqn} can be expressed in closed form. This is
useful in obtaining an intuitive understanding of the
equation~\eqref{eq:basic_eqn}, but is not necessary for our main
argument.
\begin{proposition}
\label{prop:basic_eqn_soln}
Let $E_0 \in B(I_n,1)$ be sufficiently close to $I_n$ so that
$\log(E_0) \in B(0_n, \log(2))$ and let $a_ 0 > 0$ be arbitrary. Then
the solution of~\eqref{eq:basic_eqn} with initial condition $E(0) =
E_0$ is defined for all $t \geq 0$ and is given by
%
%
\begin{equation}
\begin{aligned}
E(t) &= \exp(\exp(-a_0 t) \log(E_0)) .
\label{eq:basic_eqn_soln}
\end{aligned}
\end{equation}
%
%
\end{proposition}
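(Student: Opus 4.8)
The plan is to solve~\eqref{eq:basic_eqn} in the logarithmic coordinates of Lemma~\ref{lemma:basic_eqn}, where it collapses to the elementary linear system $\dot{e} = -a_0 e$, and then transport the solution back through the exponential map. Set $e_0 \coloneqq \log(E_0)$, so that by hypothesis $\|e_0\| < \log(2)$. The unique solution of $\dot{e} = -a_0 e$ with $e(0) = e_0$ is $e(t) = \exp(-a_0 t)\,e_0$, and I define the candidate trajectory $E(t) \coloneqq \exp(e(t)) = \exp(\exp(-a_0 t)\log(E_0))$, which is precisely the asserted formula~\eqref{eq:basic_eqn_soln}. What remains is to check that this candidate genuinely solves~\eqref{eq:basic_eqn}, is defined for all $t \geq 0$, and is the only such solution.

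The key step, and the only place the hypothesis $\log(E_0)\in B(0_n,\log(2))$ is used, is to confirm that $E(t)$ never leaves the chart $B(I_n,1)$ on which $\log$ is a bona fide inverse of $\exp$, so that the change of coordinates stays legitimate for all $t \geq 0$. Since $a_0 > 0$, the scalar factor $\exp(-a_0 t)$ lies in $(0,1]$ for $t \geq 0$, whence $\|e(t)\| = \exp(-a_0 t)\|e_0\| \leq \|e_0\| < \log(2)$, so $e(t)\in B(0_n,\log(2))$ for every $t\geq 0$. Estimating the exponential series termwise with the submultiplicativity of the induced norm then gives $\|E(t) - I_n\| \leq \sum_{k=1}^{\infty}\frac{\|e(t)\|^k}{k!} \leq \sum_{k=1}^{\infty}\frac{\|e_0\|^k}{k!} < \sum_{k=1}^{\infty}\frac{(\log 2)^k}{k!} = 1$, so $E(t)\in B(I_n,1)$ for all $t\geq 0$. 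Combined with Lemma~\ref{lemma:explog_properties}(b), this yields $\log(E(t)) = e(t)$, i.e.\ the log coordinates of $E(t)$ are exactly $e(t)$.

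With the forward invariance of the chart in hand, I would finish by direct verification. Because $e(t) = \exp(-a_0 t)\,e_0$ is a scalar multiple of the fixed matrix $e_0$, the matrices $\{e(t)\}_{t\geq 0}$ mutually commute, so differentiating the exponential series termwise gives $\dot{E}(t) = -a_0\exp(-a_0 t)\,e_0\,E(t) = -a_0\,e(t)\,E(t) = -a_0\log(E(t))\,E(t)$. Invoking Theorem~\ref{thm:logcommute} to interchange $\log(E(t))$ and $E(t)$ shows $\dot{E}(t) = -a_0\,E(t)\log(E(t))$, so $E(t)$ solves~\eqref{eq:basic_eqn}, and by construction it is defined for all $t\geq 0$. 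To justify calling it \emph{the} solution, uniqueness follows either from local Lipschitz continuity of the right-hand side of~\eqref{eq:basic_eqn} on $B(I_n,1)$, or, equivalently, from Lemma~\ref{lemma:basic_eqn}: as long as trajectories stay in $B(I_n,1)$, the diffeomorphism $e=\log(E)$ puts them in bijection with solutions of the linear equation $\dot{e}=-a_0 e$, which are unique. I expect the sole genuine obstacle to be the forward-invariance bookkeeping of the second paragraph; once the trajectory is trapped inside $B(I_n,1)$, the remaining differentiation and uniqueness arguments are routine.
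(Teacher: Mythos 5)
Your proposal is correct and follows essentially the same route as the paper: verify the candidate $E(t)=\exp(\exp(-a_0t)\log(E_0))$ by direct differentiation, using $\log(E(t))=\exp(-a_0t)\log(E_0)$ via Lemma~\ref{lemma:explog_properties}(b), and establish $E(t)\in B(I_n,1)$ for all $t\ge 0$ with the identical termwise bound $\|E(t)-I_n\|\le \exp(\|\log(E_0)\|)-1<1$. Your added uniqueness remark (local Lipschitz continuity of the right-hand side on $B(I_n,1)$) is a small, welcome supplement that the paper leaves implicit, but it does not change the substance of the argument.
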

\begin{proof}
  First, we will show that the candidate
  solution~\eqref{eq:basic_eqn_soln} is a solution to the differential
  equation~\eqref{eq:basic_eqn} with initial condition $E_0$. First,
  we check the initial condition. The value at $t=0$ of $E(t)$ is
  $E(0) = E_0$ as required.

  Next, we check that $E(t)$ satisfies
  \eqref{eq:basic_eqn}. Differentiate $E(t)$ with respect to time
\begin{equation*}
\begin{aligned}
\frac{\D}{\D t} E(t) &= \frac{\D}{\D t} \left[
\exp(\exp(-a_0 t) \log(E_0)) \right] \\
&= -a_0 \exp(-a_0 t) \log(E_0) \exp(\exp(-a_0 t) \log(E_0)) \\
&= -a_0 \log(E) E ,
\label{eq:deriv_of_at}
\end{aligned}
\end{equation*}
where we have used the identity
\[
\log(E) = \log(\exp(\exp(-a_0 t) \log(E_0))) = \exp(-a_0 t) \log(E_0),
\]
which follows from Lemma \ref{lemma:explog_properties} (b) and the
assumption that $\log(E_0) \in B(0_n, \log(2))$.

To finish the proof, let us check that the solution $E(t)$
of~\eqref{eq:basic_eqn}, with initial condition $E_0$ as stated in the
proposition, is such that for all future times $t>0$, we have $E(t)
\in B(I_n,1)$. The solution of the linear differential
equation~\eqref{eq:basic_eqn_log} is $e(t) = e(0) \exp(-a_0 t)$, where
$e(0) = \log(E_0)$. Using the fact that $E(t) = \exp(e(t))$, we
compute upper bound on $\| E(t) - I_n \|$, for $t \geq 0$:
\begin{equation*}
\begin{aligned}
\| E(t) - I_n \| &= \left\| \sum_{k=0}^{\infty} \frac{1}{k!} e(0)^k \exp(-a_0 t k)
- I_n \right\| \\
&= \left\| \sum_{k=1}^{\infty} \frac{1}{k!} e(0)^k \exp(-a_0 t k)  \right\| \\
&\leq \sum_{k=1}^{\infty} \frac{1}{k!} \left\| e(0) \right\|^k \exp(-a_0 t k) \\
&\leq \sum_{k=1}^{\infty} \frac{1}{k!} \left\| e(0) \right\|^k \\
&= \exp(\left\| e(0) \right\|) - 1 \\
&< 2 - 1 = 1 .
\label{eq:ball_posinv}
\end{aligned}
\end{equation*}
The last inequality follows from the assumption that $\log(E_0) \in
B(0_n, \log(2))$. Thus, $E(t) \in B(I_n,1)$ for all $t \geq 0$.
\end{proof}
Let $E(t)$ be a solution of~\eqref{eq:basic_eqn}, which is initialized
at $E_0 = E(0)$, such that the conditions of
Proposition~\ref{prop:basic_eqn_soln} are satisfied.  Then, for all $t
\geq 0$, $E(t)$ stays on the same one-parameter subgroup, on which it
was initialized at time $t=0$. Indeed, by
Proposition~\ref{prop:basic_eqn_soln}, we have
\[
\left( \forall t \geq 0 \right) \; 
\left( \exists \tau \in \Real \right) \; \hspace{3mm} 
E(t) = \exp(\tau \log(E_0)) .
\]
Thus, in a neighbourhood of $I_n \in \GL{n,\Real}$, the vector
field~\eqref{eq:basic_eqn} is a linear vector field on Lie group,
since its flow is a one-parameter
subgroup~\cite{Jou:09},~\cite{Jou:11}.

\subsection{A Differential Equation on $\GL{n, \Real}$ and $\M{n,\Real}$}
\label{sec:LPSO_de_matrix}
Consider the following differential equation, which is a natural
extension of the differential equation~\eqref{eq:basic_eqn},
\begin{equation}
\begin{aligned}
\dot{E} &= e_2 E - a_{d-1} E \log(E) \\
\dot{e}_2 &= e_3 - a_{d-2} \log(E) \\
\vdots \\
\dot{e}_{d-1} &= e_d - a_1 \log(E)  \\
\dot{e}_d &= - a_0 \log(E) , 
\label{eq:LPSO_basic_eqn}
\end{aligned}
\end{equation}
where $E \in \GL{n,\Real}$, $e_i \in \M{n,\Real}$ for $i = 2, \hdots,
d$ and $a_0, \hdots, a_{d-1} \in \Real$ are constants such that the
polynomial $p(s) = s^d + a_{d-1} s^{d-1} + \cdots + a_1 s + a_0$ is
Hurwitz.  System~\eqref{eq:LPSO_basic_eqn} arises in the analysis of
the error dynamics associated with the direct
LPSO~\eqref{eq:LPSO_obs_direct1}.  

\begin{remark}
  Let $\g \subseteq \GL{n,\Real}$ be any linear Lie group, then the
  embedded submanifold, $S \coloneqq \g \times \Lie(\g) \times \cdots
  \times \Lie(\g)$, in the state space of~\eqref{eq:LPSO_basic_eqn} is
  positively invariant under the
  dynamics~\eqref{eq:LPSO_basic_eqn}. To see this, we check that, if
  $p = (E, e_2, \hdots, e_d) \in S$, then the vector
  field~\eqref{eq:LPSO_basic_eqn}, evaluated at $p$, lies in the
  tangent space to $S$ at $p$.

  Indeed, by Proposition~\ref{prop:lie_tangent_space}, we have that $E
  \log(E) \in T_E G$ and that $e_2 E \in T_E G$.  Therefore, the
  vector $\dot{E} = e_2 E - a_{d-1} E \log(E)$ is in the tangent space
  to $\g$ at $E$, i.e., $\dot{E} \in T_E \g$.  Furthermore we have
  that, for $i=2,\hdots,d$, $\dot{e}_i \in \Lie(\g) \simeq T_{e_i}
  \Lie(\g)$, because $\Lie(\g)$ is a vector space.  So the following
  holds
\begin{equation*}
\begin{aligned}
E \in G, \hspace{3mm} e_2 \in &\Lie(G), \hspace{3mm}
\hdots, \hspace{3mm} e_d \in \Lie(G)  \\
&\Downarrow \\
\dot{E} \in T_{E}G,  \hspace{3mm}
\dot{e}_2 \in &T_{e_2}\Lie(G), \hspace{3mm} \hdots, 
\hspace{3mm} \dot{e}_d \in T_{e_d}\Lie(G)  , 
\end{aligned}
\end{equation*}
Therefore, the vector field~\eqref{eq:LPSO_basic_eqn} is 
tangent to the submanifold $S$.  
\end{remark}

In general the matrices $E$ and $\dot{E}$ in~\eqref{eq:LPSO_basic_eqn}
do not commute. This is because $E$ and $e_2$ are generally
non-commuting matrices, i.e.,
\begin{equation*}
\begin{aligned}
\left[ E , \dot{E} \right] &= \left[ E , e_2 E - a_1 E \log(E) \right] \\
&= \left[ E , e_2 E \right] \\
&= E e_2 E - e_2 E^2 \\
&= \left[E , e_2 \right] E .
\end{aligned}
\end{equation*}
The non-commutativity of $E$ and $\dot{E}$ means that, defining $e_1
\coloneqq \log(E)$, the expression for $\dot{e}_1$ is not as simple as
was the case for equation~\eqref{eq:basic_eqn} in
Section~\ref{sec:LFSO_de_matrix}. In particular, we do not obtain a
closed-form expression for $\dot{e}_1$. Instead we have the following,
weaker, result.

%

\begin{proposition}
  In the open \nbhd $B(I_n, 1) \times \left(\M{n,\Real}\right)^{d-1} $
  the differential equation~\eqref{eq:LPSO_basic_eqn} is
  differentially equivalent to
\begin{equation*}
\begin{aligned}
\dot{e}_1 &= e_2 - a_{d-1} e_1 + K(e_1, e_2) \\
\dot{e}_2 &= e_3 - a_{d-2} e_1  \\
\vdots \\
\dot{e}_{d-1} &= e_d - a_1 e_1 \\
\dot{e}_d &= -a_0 e_1 , 
\end{aligned}
\end{equation*}
where $e_1 \coloneqq \log(E)$ and $K: \Lie(G) \times \Lie(G) \rightarrow
\Lie(G)$ is a smooth function that vanishes if $e_1$ and $e_2$
commute, i.e.,
\[
\left(\forall \; e_1, e_2 \in \M{n,\Real} \; [e_1, e_2] = 0_n\right)  \;
K(e_1, e_2) = 0_n.
\]
\end{proposition}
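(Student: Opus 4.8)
The plan is to take the matrix logarithm as the change of coordinates and thereby reduce the whole statement to a single computation of $\dot e_1 = \frac{\D}{\D t}\log(E)$ along~\eqref{eq:LPSO_basic_eqn}. First I would record the differential equivalence: by Lemma~\ref{lemma:explog_properties}, $\log$ is a diffeomorphism of $B(I_n,1)$ onto its image, so $\Phi : (E, e_2, \dots, e_d) \mapsto (\log(E), e_2, \dots, e_d)$ is a diffeomorphism on $B(I_n,1) \times (\M{n,\Real})^{d-1}$. The coordinates $e_2, \dots, e_d$ are untouched by $\Phi$, and the right-hand sides of the last $d-1$ equations of~\eqref{eq:LPSO_basic_eqn} already depend on $E$ only through $\log(E) = e_1$; hence those equations carry over verbatim to $\dot e_i = e_{i+1} - a_{d-i} e_1$ (with $e_{d+1} := 0_n$). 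All of the content therefore sits in the top equation.

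Next I would argue that $\dot e_1$ is a well-defined smooth function of $(e_1, e_2)$ taking values in $\Lie(\g)$. Writing $E = \exp(e_1)$ and $\dot E = e_2 E - a_{d-1} E \log(E)$, the quantity $\dot e_1 = \frac{\D}{\D t}\log(E)$ is the Fr\'echet derivative of $\log$ at $E$ applied to $\dot E$. Since $\log$ is real-analytic on $B(I_n,1)$ (given by the convergent series~\eqref{eq:logarithm_def}) and $\exp$ is entire, this derivative is a smooth function of $(e_1, e_2)$. Because $E \in \g$ forces $\log(E) \in \Lie(\g)$ and $\dot E \in T_E\g = \Lie(\g)\,E$ by Proposition~\ref{prop:lie_tangent_space}, the image lies in $\Lie(\g)$. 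I then simply define $K(e_1,e_2) := \dot e_1 - e_2 + a_{d-1} e_1$, which is automatically a smooth, $\Lie(\g)$-valued map satisfying $\dot e_1 = e_2 - a_{d-1} e_1 + K(e_1, e_2)$; only the vanishing property then remains.

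The crux is showing $K(e_1,e_2) = 0_n$ whenever $[e_1, e_2] = 0_n$, and here the difficulty is exactly the noncommutativity the paper has flagged: in general $[E, \dot E] = [E, e_2] E \neq 0_n$, so the clean term-by-term computation of Section~\ref{sec:LFSO_de_matrix} does not apply. The key observation is that commutativity of $e_1$ and $e_2$ restores precisely the hypothesis needed there. Indeed, if $[e_1, e_2] = 0_n$ then $e_2$ commutes with $E = \exp(e_1)$ (a power series in $e_1$), while $\log(E) = e_1$ commutes with $E$ by Theorem~\ref{thm:logcommute}; hence $\dot E = (e_2 - a_{d-1} e_1) E$ commutes with $E$. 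With $E$ and $\dot E$ commuting, differentiating the series~\eqref{eq:logarithm_def} term-by-term and applying Lemma~\ref{lemma:deriv_of_product} followed by the Neumann series of Proposition~\ref{prop:inverse_def} collapses $\frac{\D}{\D t}\log(E)$ to $E^{-1}\dot E$, exactly as in Section~\ref{sec:LFSO_de_matrix}. Finally $E^{-1}\dot E = E^{-1}(e_2 - a_{d-1}e_1) E = e_2 - a_{d-1}e_1$ by commutativity, so $K(e_1, e_2) = 0_n$.

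The main obstacle is this noncommutative differentiation of $\log(E)$. I deliberately avoid computing the full correction $K$ in closed form, since doing so requires the $\mathrm{ad}$-series for the inverse of the differential of $\exp$, whose coefficients are Bernoulli numbers; one obtains $K(e_1,e_2) = \big( \tfrac{\mathrm{ad}_{e_1}}{\exp(\mathrm{ad}_{e_1}) - I_n} - I_n \big)(e_2)$, where $\mathrm{ad}_{e_1}(\cdot) = [e_1, \cdot\,]$. This expression is a convergent power series in $\mathrm{ad}_{e_1}$ with no zeroth-order term, and hence vanishes when $\mathrm{ad}_{e_1}(e_2) = [e_1, e_2] = 0_n$, giving an independent confirmation of both the smoothness and the vanishing property. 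For the proof itself, however, the commuting-case reduction above is cleaner and stays entirely within the tools already developed.
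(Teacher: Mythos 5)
Your proposal is correct and follows essentially the same route as the paper: define $K(e_1,e_2) \coloneqq \dot e_1 - e_2 + a_{d-1}e_1$, observe that $\dot e_1$ depends only on $(e_1,e_2)$, and show that commutativity of $e_1$ and $e_2$ forces $E$ and $\dot E$ to commute so that the computation of Section~\ref{sec:LFSO_de_matrix} collapses $\dot e_1$ to $E^{-1}\dot E = e_2 - a_{d-1}e_1$. Your added material --- the Fr\'echet-derivative justification of smoothness and the closed-form $\bigl(\mathrm{ad}_{e_1}/(\exp(\mathrm{ad}_{e_1})-I_n) - I_n\bigr)(e_2)$ expression for $K$ --- is a correct strengthening of what the paper leaves implicit, but the core argument is the same.
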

\begin{proof}
  Since $e_1 = \log(E)$ is a Taylor series in $E$, term by term
  differentiation yields that $\dot{e}_1$ only depends on $E$ and
  $\dot{E}$.  Furthermore, from~\eqref{eq:LPSO_basic_eqn}, we know
  that $\dot{E}$ only depends on $E$ and $e_2$.  Thus, using $E =
  \exp(e_1)$, we have that $\dot{e}_1$ only depends on $e_1$ and
  $e_2$.  Let $K(e_1, e_2) \coloneqq \dot{e}_1 - e_2 + a_{d-1} e_1$.

  Assume that $e_1$ and $e_2$ commute. This implies that $E =
  \exp(e_1)$ and $e_2$ also commute and this implies that $E$ and
  $\dot{E}$ commute. Since $E \dot{E} = \dot{E} E$, we can repeat
  almost the same analysis that we used in
  Section~\ref{sec:LFSO_de_matrix}, doing this we get

\begin{equation*}
\begin{aligned}
\dot{e}_1 &= \dot{E} E^{-1} \\
&= e_2 - a_{d-1} e_1 ,
\end{aligned}
\end{equation*}
therefore $K(e_1, e_2) = 0_n$ for any commuting $e_1$ and $e_2$.

The expressions of $\dot{e}_i$ for $i = 2, \hdots, d$ are computed by
substituting $\log(E) = e_1$ into~\eqref{eq:LPSO_basic_eqn}.
\end{proof}

\begin{lemma}
\label{lemma:LPSO_basic_eqn_stable}
If the constants $a_0, \hdots, a_{d-1} \in \Real$ are chosen such that
the polynomial $p(s) = s^d + a_{d-1} s^{d-1} + \cdots + a_1 s + a_0$
is Hurwitz then the equilibrium point $(E, e_2, \ldots, e_d) = (I_n,
0_n, \ldots, 0_n)$ of the differential equation~\eqref{eq:LPSO_basic_eqn}
is locally exponentially stable.
\end{lemma}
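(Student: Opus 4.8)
The plan is to exploit the differential equivalence furnished by the preceding proposition and then invoke Lyapunov's indirect method. First I would pass to the logarithmic coordinate $e_1 \coloneqq \log(E)$, which, on the open neighbourhood $B(I_n,1)\times(\M{n,\Real})^{d-1}$ of the equilibrium, transforms~\eqref{eq:LPSO_basic_eqn} into
\begin{equation*}
\begin{aligned}
\dot{e}_1 &= e_2 - a_{d-1}e_1 + K(e_1,e_2) \\
\dot{e}_2 &= e_3 - a_{d-2}e_1 \\
&\;\;\vdots \\
\dot{e}_d &= -a_0 e_1 .
\end{aligned}
\end{equation*}
Since $\log$ is a diffeomorphism on $B(I_n,1)$ and local exponential stability is a coordinate-independent property (as already used for Lemma~\ref{lemma:basic_eqn}), it suffices to prove that the origin $(e_1,\ldots,e_d)=(0_n,\ldots,0_n)$ of this transformed system is locally exponentially stable; the property then carries back to $(I_n,0_n,\ldots,0_n)$ for~\eqref{eq:LPSO_basic_eqn}.

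The heart of the argument is to show that the smooth map $K$ contributes nothing to the linearization, i.e. that $DK(0_n,0_n)=0_n$. For this I would use the vanishing property of $K$ supplied by the preceding proposition together with the elementary observation that $0_n$ commutes with every matrix. Consequently both coordinate axes $\{(e_1,0_n)\}$ and $\{(0_n,e_2)\}$ lie inside the set on which $e_1$ and $e_2$ commute, so $K(e_1,0_n)=0_n$ and $K(0_n,e_2)=0_n$ \emph{identically}. Differentiating these two relations shows that both partial derivatives of $K$ vanish at the origin, and since $K$ depends only on $(e_1,e_2)$ this forces the full differential $DK(0_n,0_n)$ to vanish. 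Hence $K$ is of higher order and the linearization of the transformed system at the origin is exactly the linear system obtained by deleting $K$.

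I would then identify that linearization. Writing each $e_i\in\M{n,\Real}\cong\Real^{n^2}$ entrywise, the $(p,q)$ entries of $(e_1,\ldots,e_d)$ decouple from all others and evolve under the scalar companion matrix associated with $p(s)$, so the full linear operator is
\begin{equation*}
\mathcal{A} =
\begin{bmatrix}
-a_{d-1} & 1 & 0 & \cdots & 0 \\
-a_{d-2} & 0 & 1 & \cdots & 0 \\
\vdots & & & \ddots & \\
-a_1 & 0 & \cdots & 0 & 1 \\
-a_0 & 0 & \cdots & 0 & 0
\end{bmatrix} \otimes I_{n^2},
\end{equation*}
whose characteristic polynomial is $p(s)^{n^2}$. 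The eigenvalues of $\mathcal{A}$ are therefore the roots of $p(s)$, each with multiplicity $n^2$, and since $p(s)$ is Hurwitz by hypothesis every eigenvalue of $\mathcal{A}$ has strictly negative real part.

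Finally, with a Hurwitz linearization and a remainder $K$ that vanishes to first order at the origin, Lyapunov's indirect method yields local exponential stability of the origin of the transformed system, and hence of $(I_n,0_n,\ldots,0_n)$ for~\eqref{eq:LPSO_basic_eqn}. The main obstacle is precisely the verification that $DK(0_n,0_n)=0_n$; everything else is a routine appeal to the companion-matrix eigenvalue structure and the linearization theorem. The delicate point worth flagging is that vanishing of $K$ on the \emph{entire} commuting variety is not by itself enough to conclude $DK=0$, because the tangent cone of that variety at the origin consists only of commuting directions; the clean reason $DK(0_n,0_n)=0_n$ holds is the stronger fact that each coordinate axis separately lies in the commuting set.
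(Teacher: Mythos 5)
Your proof is correct, but it takes a genuinely different route from the paper's. The paper never invokes the preceding proposition or the remainder $K$ at all: it linearizes~\eqref{eq:LPSO_basic_eqn} directly in the ambient coordinates by setting $\delta E \coloneqq E - I_n$, $\delta e_i \coloneqq e_i$, Taylor-expanding $\log(E) \approx \delta E$, $E\log(E)\approx \delta E$ and $e_2 E \approx \delta e_2$, and reading off the block companion matrix whose characteristic polynomial is a power of $p(s)$; Hurwitzness then gives local exponential stability via the indirect method. You instead pass to the $\log$ chart first and must then dispose of the commutator correction $K$, which you do cleanly: since $0_n$ commutes with everything, $K(\cdot,0_n)\equiv 0_n$ and $K(0_n,\cdot)\equiv 0_n$, so both partial derivatives of the smooth map $K$ vanish at the origin and $K$ drops out of the linearization. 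This is a valid and slightly less routine argument than the paper's; what it buys is that the linearization is computed in a genuine vector-space chart (so no hand-waving about linearizing a manifold-valued state is needed), and it makes explicit \emph{why} the non-commutativity is harmless to first order. What the paper's route buys is brevity -- it needs no information about $K$ whatsoever. A small point in your favour: you correctly identify the multiplicity of each root of $p(s)$ as $n^2$ (the dimension of $\M{n,\Real}$), whereas the paper states multiplicity $n$. One cosmetic quibble: your closing caveat about the tangent cone of the commuting variety is not quite needed -- the geometric tangent cone at the origin is exactly the set of commuting pairs, which contains both coordinate axes and hence spans $\M{n,\Real}\times\M{n,\Real}$, so vanishing of the linear map $DK(0_n,0_n)$ on that cone already forces it to vanish identically; your axis argument is simply the cleanest way to see this.
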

\begin{proof}
  Adapting the proof of~\cite[Theorem 3.1 (ii)]{MalHamMahMor:09}, we
  show that~\eqref{eq:LPSO_basic_eqn} is locally exponentially stable
  at the equilibrium point, by showing that its linearization, around
  the equilibrium point $(I_n, 0_n, \ldots, 0_n)$, is exponentially
  stable.

  In a \nbhd of the equilibrium point $(I_n, 0_n, \ldots, 0_n)$ define
\[
\delta E \coloneqq E - I_n, \hspace{7mm} 
\delta e_2 \coloneqq e_2 - 0_n, \hspace{7mm} 
\hdots, \hspace{7mm} 
\delta e_d \coloneqq e_d - 0_n .
\]
Using the series definition of the matrix
logarithm~\eqref{eq:logarithm_def}
\[
\log(E) = \delta E - \frac{1}{2} (\delta E)^2 + \cdots
\]
we deduce that, near $\delta E = 0_n$,
\[
\log(E) \approx \delta E.
\]
Similarly, using $E = \delta E + I_n$, 
and dropping higher order terms in $\delta E$, we get 
\[
E \log(E) = (\delta E + I_n) 
\left( (\delta E) - \frac{1}{2} (\delta E)^2 + \cdots \right) 
\approx \delta E .
\]
Finally, near the equilibrium point $(I_n, 0_n, \ldots, 0_n)$,
\[
e_2 E = \left(\delta e_2 \right) 
\left(\delta E + I_n \right) \approx \delta e_2 . 
\]
Substituting these approximations into the differential
equation~\eqref{eq:LPSO_basic_eqn}, we get the linearization
of~\eqref{eq:LPSO_basic_eqn} at $(I_n, 0_n, \ldots, 0_n)$
\begin{equation*}
\begin{aligned}
\frac{\D}{\D t} \left[ \begin{array}{c} 
\delta E \\
\delta e_2 \\
\delta e_3 \\ 
\vdots \\
\delta e_{d-1} \\
\delta e_d 
\end{array} \right] = \left( \begin{array}{cccccc}
-a_{d-1} I_n & I_n & 0_n & \hdots & 0_n & 0_n \\
-a_{d-2} I_n & 0_n & I_n & \hdots & 0_n & 0_n \\
-a_{d-3} I_n & 0_n & 0_n & \hdots & 0_n & 0_n \\
\vdots & \vdots & \vdots & \ddots & \vdots & \vdots \\
-a_{1} I_n & 0_n & 0_n & \hdots & 0_n & I_n \\
-a_{0} I_n & 0_n & 0_n & \hdots & 0_n & 0_n 
\end{array} \right) \left[ \begin{array}{c} 
\delta E \\
\delta e_2 \\
\delta e_3 \\ 
\vdots \\
\delta e_{d-1} \\
\delta e_d 
\end{array} \right]  .
\end{aligned}
\end{equation*}
The eigenvalues of the system matrix are located at the roots of the
polynomial $p(s) = s^d + a_{d-1} s^{d-1} + \cdots + a_1 s + a_0$, with
multiplicity $n$, for each (possibly repeating) root of $p(s)$.  Since
all the eigenvalues have negative real parts, the linearization above
is exponentially stable. Therefore $(E, e_2, \hdots, e_d) = (I_n, 0_n,
\hdots, 0_n)$ is a locally exponentially stable equilibrium
of~\eqref{eq:LPSO_basic_eqn}.
\end{proof}

\section{Estimation Error Dynamics}
\label{sec:errordyn}

In this section we analyse the stability of the estimation error for
the each of the observers proposed in Section~\ref{sec:proposed}. We
show that, under Assumptions~\ref{ass:bounded}
and~\ref{ass:dyn_bounded}, the estimates exponentially
converge to the state of the system.

\subsection{Local full state observers}
\label{sec:LFSO_observer_proof}
We first analyze the dynamics of the error functions $E_l$ and $E_r$
under the observers defined by~\eqref{eq:obs_passive1}
and~\eqref{eq:obs_direct1}. Our analysis makes frequent use of Lemma
\ref{lemma:inverse_deriv}.  We assume that $\hat{X}$ is initialized
sufficiently close to $X$, so that $E_l$, $E_r \in B(I_n,1)$. This
assumption is sufficient to ensure that the series definitions,
using~\eqref{eq:logarithm_def}, of $\log(E_r)$ and $\log(E_l)$ are
convergent.

\subsubsection{Passive Observer}
When the passive observer~\eqref{eq:obs_passive1} is used to estimate
the state of~\eqref{eq:leftinv_system_def} dynamics of the
right-invariant error, $E_r$, making use of
Lemma~\ref{lemma:inverse_deriv}, are
\begin{equation}
\begin{aligned}
\dot{E}_r &= \frac{d}{dt} \left[ \hat{X} X^{-1} \right] \\
&= \dot{\hat{X}} X^{-1} - \hat{X} X^{-1} \dot{X} X^{-1} \\
&= \hat{X} u X^{-1} - a_0 \hat{X} \log(X^{-1} \hat{X}) X^{-1} -
\hat{X} u X^{-1} \\
&= -a_0 \hat{X} \log(X^{-1} \hat{X}) X^{-1} \\
&= -a_0 \hat{X} X^{-1} X \log(X^{-1} \hat{X}) X^{-1} \\
&= -a_0 \hat{X} X^{-1} \log(\hat{X} X^{-1}) \\
&= -a_0 E_r \log(E_r) .
\label{eq:rightinv_error_passive_obs}
\end{aligned}
\end{equation}
The above differential equation is formally the same as
equation~\eqref{eq:basic_eqn}. Therefore if $\hat{X}$ is sufficiently
close to $X$ so that $E_r \in B(I_n,1)$ then, by
Lemma~\ref{lemma:basic_eqn},
system~\eqref{eq:rightinv_error_passive_obs} is differentially
equivalent to
\begin{equation}
\begin{aligned}
\dot{e}_r = -a_0 e_r , 
\label{eq:rightinv_error_passive_obs_log}.
\end{aligned}
\end{equation}
By choosing $a_0 > 0$, Lemma~\ref{lemma:basic_eqn} states the
equilibrium point $E_r = I_n$ is locally exponentially stable for
system~\eqref{eq:rightinv_error_passive_obs}. This discussion, in light
of Proposition~\ref{prop:norm_convergence}, proves the following solution
to Problem~\eqref{prob:kinematic}.

\begin{corollary}
\label{cor:prob1}
For $E_r(0) \in B(I_n, 1)$, the passive
observer~\eqref{eq:obs_passive1} exponentially stabilizes $E_r =
I_n$. Furthermore, under Assumption~\ref{ass:bounded}, the passive
observer solves Problem~\eqref{prob:kinematic}.
\end{corollary}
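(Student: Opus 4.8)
The plan is to assemble three facts that are already in place, since this is a corollary whose analytical content has been front-loaded into the preceding computation and lemmas. The derivation culminating in~\eqref{eq:rightinv_error_passive_obs} has already shown that, along solutions of the passive observer~\eqref{eq:obs_passive1}, the right-invariant error obeys $\dot{E}_r = -a_0 E_r \log(E_r)$, which is formally identical to the canonical equation~\eqref{eq:basic_eqn} studied in Section~\ref{sec:LFSO_de_matrix} with $E$ replaced by $E_r$. Consequently the first assertion is immediate: I would invoke Lemma~\ref{lemma:basic_eqn} with $a_0 > 0$ to conclude that $E_r = I_n$ is a locally exponentially stable equilibrium of~\eqref{eq:rightinv_error_passive_obs}. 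The restriction $E_r(0) \in B(I_n,1)$ is precisely what guarantees that the log-coordinate change $e_r = \log(E_r)$ underlying that lemma is well defined, so that the reduction to the linear equation~\eqref{eq:rightinv_error_passive_obs_log} is valid and the eigenvalue $-a_0$ fixes the convergence rate.

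For the second assertion I would transfer exponential convergence of $E_r$ to exponential convergence of $\hat{X}$ via Proposition~\ref{prop:norm_convergence}. Assumption~\ref{ass:bounded} guarantees that every solution $X(t)$ of~\eqref{eq:leftinv_system_def} is bounded, which is exactly the hypothesis of that proposition; combined with the $E_r \to I_n$ convergence established in the first part, it yields $\hat{X} \to X$ exponentially. The translation of the Problem~\ref{prob:kinematic} hypothesis ``$\hat{X}(0)$ sufficiently close to $X(0)$'' into ``$E_r(0)$ close to $I_n$'' is handled internally by the norm inequalities~\eqref{eq:lemma_norm_convergence_inequalities2} inside Proposition~\ref{prop:norm_convergence}, which bound $\|E_r - I_n\|$ by $\|X^{-1}\|\,\|\hat{X} - X\|$ with $\|X^{-1}\|$ uniformly bounded on the compact set where $X$ evolves, so the basins match up and Problem~\ref{prob:kinematic} is solved.

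The only point I expect to require genuine care, rather than a true obstacle, is confirming that the solution $E_r(t)$ remains inside the chart $B(I_n,1)$ on which the log-coordinate equivalence of Lemma~\ref{lemma:basic_eqn} holds; if $E_r$ were to exit this ball, the reduction to $\dot{e}_r = -a_0 e_r$ would break down. I would settle this exactly as in Proposition~\ref{prop:basic_eqn_soln}: in log coordinates the solution is $e_r(t) = \exp(-a_0 t)\, e_r(0)$, so $\|e_r(t)\|$ is nonincreasing, and the estimate $\|E_r(t) - I_n\| \leq \exp(\|e_r(t)\|) - 1$ then keeps $E_r(t)$ within $B(I_n,1)$ for all $t \geq 0$ once $E_r(0)$ is taken sufficiently close to $I_n$. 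With forward invariance of the chart secured, the exponential decay of $e_r$ pulls back to exponential convergence of $E_r$ to $I_n$, and both claims of the corollary follow.
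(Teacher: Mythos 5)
Your proposal is correct and follows essentially the same route as the paper: derive $\dot{E}_r = -a_0 E_r\log(E_r)$ from~\eqref{eq:rightinv_error_passive_obs}, apply Lemma~\ref{lemma:basic_eqn} for local exponential stability of $E_r = I_n$, and then use Proposition~\ref{prop:norm_convergence} under Assumption~\ref{ass:bounded} to conclude $\hat{X}\to X$ exponentially. Your added check that $E_r(t)$ stays in the chart $B(I_n,1)$ is exactly the content of Proposition~\ref{prop:basic_eqn_soln} (which requires the slightly smaller initial set $\log(E_r(0))\in B(0_n,\log 2)$, consistent with the ``sufficiently close'' qualifier of local exponential stability), so nothing is missing.
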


The convergence of $E_r$ to $I_n$ does not rely on the trajectories
of~\eqref{eq:leftinv_system_def} being bounded. Next, we examine the
dynamics of the left-invariant error, $E_l$, to see if
Assumption~\ref{ass:bounded} can be weakened. The dynamics of the left
invariant error $E_l$ under the passive
observer~\eqref{eq:obs_passive1} are
\begin{equation}
\begin{aligned}
\dot{E}_l &= \frac{d}{dt} \left[ X^{-1} \hat{X} \right] \\
&= - X^{-1} \dot{X} X^{-1} \hat{X} + X^{-1} \dot{\hat{X}} \\
&= - u X^{-1} \hat{X} + X^{-1} \hat{X} u - a_0 X^{-1} \hat{X}
\log(X^{-1} \hat{X}) \\
&= -u E_l + E_l u - a_0 E_l \log(E_l) \\
&= - a_0 E_l \log(E_l)  + \delta_P(u, E_l), 
\label{eq:leftinv_error_passive_obs}
\end{aligned}
\end{equation}
where $\delta_P(u, E_l) \coloneqq E_l u - u E_l$ is a perturbation
term that vanishes when $E_l = I_n$.  Since the matrices $E_l$ and
$\dot{E}_l$ do not, in general, commute,
Lemma~\ref{lemma:deriv_of_product} does not hold
for~\eqref{eq:leftinv_error_passive_obs}.


Next we transform the error
dynamics~\eqref{eq:leftinv_error_passive_obs} into $\log$
coordinates. Recall, by Lemma~\ref{lemma:log_error_relationship}, if
$\hat{X}$ is sufficiently close to $X$ then $e_l = X^{-1} e_r
X$. Therefore to transform the dynamics
\eqref{eq:leftinv_error_passive_obs} into $e_l$ coordinates, we just
differentiate this alternate expression for $e_l$
\begin{equation}
\begin{aligned}
\dot{e}_l &= \frac{d}{dt} \left[ X^{-1} e_r X \right] \\
&= -X^{-1} \dot{X} X^{-1} e_r X + X^{-1} \dot{e}_r X + X^{-1} e_r
\dot{X} \\
&= -u e_l - a_0 e_l + e_l u \\
&=  - a_0 e_l + \left[ e_l, u \right] .
\label{eq:leftinv_error_passive_obs_log}
\end{aligned}
\end{equation}
The above system, rewritten $\dot{e}_l = -a_0 e_l + \left[ e_l, u
\right]$, is bilinear. If $a_0 < 0$, then by~\cite[Corollary
4]{sontag1998comments},
system~\eqref{eq:leftinv_error_passive_obs_log} is integral-input to
state stable (iISS). Specifically, see~\cite{sontag1998comments},
there exist class-$\mathcal{K}_\infty$ functions $\alpha$, $\gamma$
and a class-$\mathcal{KL}$ function $\beta$ such that for any $e_l(0)
\in \M{n, \Real}$, and any input $u(\cdot)$
\[
\alpha(\|e_l(t)\|) \leq \beta(e_l(0), t) + \int_0^t\gamma(\|u(\tau)\|)\D\tau.
\]
As a result, if $u(t) \to 0_n$ as $t \to \infty$, then $e_l(t) \to 0_n$
as $t \to \infty$. Furthermore, if $\int^\infty_0\gamma(\|u(t)\|)\D t
< \infty$, then by~\cite[Proposition 6]{sontag1998comments}, $e_l(t)
\to 0_n$ as $t \to \infty$. Niether of these properties allow us to
weaken Assumption~\ref{ass:bounded}. First, because we have no
guarantees that the control signal satisfies the above properties and
second, System~\eqref{eq:leftinv_error_passive_obs} is only
differentially equivalent to~\eqref{eq:leftinv_error_passive_obs_log}
if $E_l \in B(I_n, 1)$ and the iISS property does not ensure that $e_l
\in \log{(B(I_n, 1))}$.

By showing that the system~\eqref{eq:leftinv_error_passive_obs} is
diffeomorphic to the system~\eqref{eq:leftinv_error_passive_obs_log},
we have found an easy way to prove the following, non-obvious, result.

\begin{corollary}
\label{corr:commutator_log}
Let $G \subseteq \GL{n,\Real}$ be a linear Lie group and consider the
system
\begin{equation}
\begin{aligned}
\dot{E} = [E, u]  , 
\label{eq:nonbasic_eqn_group}
\end{aligned}
\end{equation}
where $E \in \g \subseteq \GL{n, \Real}$ is the state and $u \in
\Lie(G) \subseteq \M{n,\Real}$ is an admissible input signal.  On the
open set $B(I_n,1) \cap \g$, system~\eqref{eq:nonbasic_eqn_group} is
differentially equivalent to
\begin{equation}
\begin{aligned}
\dot{e} = [e,u] ,
\label{eq:nonbasic_eqn_algebra}
\end{aligned}
\end{equation}
where $e = \log(E)$.
\end{corollary}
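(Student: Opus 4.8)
The plan is to use the logarithm itself as the change of coordinates realizing the differential equivalence, and to reduce the claim to a single identity that this coordinate change must satisfy. Recall from the preliminaries that $\log : B(I_n,1) \to \M{n,\Real}$ is a diffeomorphism onto its image. Hence~\eqref{eq:nonbasic_eqn_group} and~\eqref{eq:nonbasic_eqn_algebra} are differentially equivalent on $B(I_n,1)\cap\g$ exactly when $\log$ carries solutions of the former to solutions of the latter; that is, whenever $\dot{E} = [E,u]$ with $E \in B(I_n,1)$, one must have $\frac{\D}{\D t}\log(E) = [\log(E),u] = [e,u]$. So the entire statement reduces to verifying this one identity, for each frozen value of the input $u \in \Lie(\g)$.

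The key observation is that the tangent vector $[E,u] = Eu - uE$ is the infinitesimal generator of the conjugation flow through $E$. Indeed, differentiating the curve $t \mapsto \Ad_{\exp(-tu)}(E) = \exp(-tu)\,E\,\exp(tu)$ at $t = 0$ produces $-uE + Eu = [E,u]$. Thus I would compute the required derivative of $\log$ in the direction $[E,u]$ as $\frac{\D}{\D t}\log\!\left(\exp(-tu)\,E\,\exp(tu)\right)\big|_{t=0}$, replacing the abstract directional derivative by an honest time derivative along an explicit curve.

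The step that makes the computation collapse is the conjugation equivariance of the logarithm, Lemma~\ref{lemma:explog_properties}~(d), applied with $A = \exp(tu) \in \GL{n,\Real}$. The crucial, and easily overlooked, point is that conjugation preserves the spectrum, so $\sigma(\exp(-tu)\,E\,\exp(tu)) = \sigma(E)$. Consequently, for $E \in B(I_n,1)$ the eigenvalue hypotheses of part~(d) --- no eigenvalues on $\Real_{\leq 0}$ and spectrum inside the strip $\set{z \in \Complex : -\pi < \IM{(z)} < \pi}$ --- hold for \emph{every} $t$ and impose no new restriction. This gives the identity $\log(\exp(-tu)\,E\,\exp(tu)) = \exp(-tu)\,\log(E)\,\exp(tu)$ for all $t \in \Real$.

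Differentiating this last identity at $t = 0$ completes the proof: the left-hand side is the directional derivative of $\log$ along $[E,u]$ computed above, while the right-hand side differentiates to $-u\log(E) + \log(E)\,u = [\log(E),u] = [e,u]$. I expect the only genuine obstacle to be the uniform verification of the hypotheses of Lemma~\ref{lemma:explog_properties}~(d) along the entire conjugation curve; the spectrum-preservation remark disposes of it at once, which is precisely what turns this into the \emph{easy} argument promised in the surrounding text. As a fallback, one could instead differentiate the series~\eqref{eq:logarithm_def} term by term using $\frac{\D}{\D t}(E-I_n)^k = \sum_{j=0}^{k-1}(E-I_n)^j\,[E,u]\,(E-I_n)^{k-1-j}$ and resum, but the non-commutativity of $E$ and $[E,u]$ makes that route considerably more laborious.
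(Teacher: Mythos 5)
Your proposal is correct, and it takes a genuinely different route from the paper. The paper proves this corollary by a subtraction trick: it writes $[E,u] = \left([E,u] + E\log(E)\right) - E\log(E)$ and observes that the first vector field is exactly the left-invariant error dynamics of the passive observer~\eqref{eq:leftinv_error_passive_obs} (with $a_0=-1$), already shown to push forward to $[e,u]+e$ via the identity $e_l = X^{-1}e_rX$, while the second is~\eqref{eq:basic_eqn}, already shown to push forward to $e$; linearity of the pushforward then gives $[e,u]$. That argument is short but parasitic on the observer analysis and on the ``splitting'' $E = X^{-1}\hat{X}$. Your argument is self-contained and, to my mind, more illuminating: you recognize $[E,u]$ as the velocity at $s=0$ of the conjugation curve $s\mapsto \exp(-su)\,E\,\exp(su)$, invoke the equivariance $\log(A X A^{-1}) = A\log(X)A^{-1}$ of Lemma~\ref{lemma:explog_properties}~(d), and differentiate. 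Your care with the hypotheses of part~(d) is exactly right and is the only delicate point: conjugation preserves the spectrum, and for $E\in B(I_n,1)$ the eigenvalues lie in the open disk $\left|z-1\right|<1$, so they avoid $\Real_{\leq 0}$ and sit well inside the strip $\left|\IM(z)\right|<\pi$ uniformly along the whole curve; smoothness of $\log$ on $B(I_n,1)$ then justifies the chain rule, and freezing $u$ is legitimate because the identity to be verified is pointwise in time. What your approach buys is independence from the observer machinery and a conceptual explanation (equivariance of $\log$ under the adjoint action) for why the non-commutativity of $E$ and $\dot{E}$, which blocks the term-by-term series differentiation, is harmless here; what the paper's approach buys is brevity given that the surrounding computations have already been done.
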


\begin{proof}
Rewrite~\eqref{eq:nonbasic_eqn_group} as a difference of two vector fields 
\begin{equation*}
\begin{aligned}
\dot{E} &= \left( [E, u] + E \log(E) \right) - \left( E \log(E) \right) \\ 
&= f(E,u) - g(E) , 
\end{aligned}
\end{equation*}
where $f(E,u) \coloneqq [E, u] + E \log(E)$ 
and $g(E) \coloneqq E \log(E)$.
Since the system~\eqref{eq:leftinv_error_passive_obs}  
transforms into the system~\eqref{eq:leftinv_error_passive_obs_log}, 
we know that the vector field $f(E,u)$ transforms into 
$[e,u] + e$. Also, since the dynamics~\eqref{eq:basic_eqn} transform 
into the dynamics~\eqref{eq:basic_eqn_log}, we know that 
the vector field $g(E)$ transforms into $e$. 
This means that the vector field $f(E,u) - g(E)$ transforms into 
$[e,u] + e - e = [e, u]$.
\end{proof}

Recall, that in equation~\eqref{eq:basic_eqn}, we were able to easily
differentiate the Taylor series expansion of $\log(E)$, because the
matrices $E$ and $\dot{E} = -a_0 E \log(E)$ commute, i.e., $E \dot{E}
= \dot{E} E$.  However, in the equation~\eqref{eq:nonbasic_eqn_group},
the matrices $E$ and $\dot{E} = [E,u]$ do not commute, i.e., in
general $E \dot{E} \ne \dot{E} E$.  This non-commutativity makes it
very difficult to differentiate the Taylor series expansion of
$\log(E)$, when $\dot{E} = [E,u]$, as
in~\eqref{eq:nonbasic_eqn_group}.  Thus, it seems that the result of
Corollary~\ref{corr:commutator_log} is difficult to directly obtain by
differentiating the series expansion of $\log(E)$ and substituting
$\dot{E} = [E, u]$. Our analysis of
equation~\eqref{eq:nonbasic_eqn_group} is facilitated by taking the
systemic view of ``splitting" the
equation~\eqref{eq:nonbasic_eqn_group} into a pair consisting of
``system"~\eqref{eq:leftinv_system_def}, with state $X$, and
``observer"~\eqref{eq:obs_passive1}, with state $\hat{X}$.  The
splitting is done as $E = X^{-1} \hat{X}$, and allows us to convert
the differential equation~\eqref{eq:nonbasic_eqn_group} into $\log$
coordinates.

\subsubsection{Direct Observer}

When the direct observer~\eqref{eq:obs_direct1} is used to estimate
the state of~\eqref{eq:leftinv_system_def} dynamics of the
left-invariant error, $E_l$, making use of
Lemma~\ref{lemma:inverse_deriv}, are
\begin{equation}
\begin{aligned}
\dot{E}_l &=  \frac{d}{dt} \left[ X^{-1} \hat{X} \right] \\
&= - X^{-1} \dot{X} X^{-1} \hat{X} + X^{-1} \dot{\hat{X}} \\
&= - u X^{-1} \hat{X} + u X^{-1} \hat{X} - a_0 X^{-1}
\hat{X} \log(X^{-1} \hat{X}) \\
&= -u E_l + u E_l - a_0 E_l \log(E_l) \\
&= -a_0 E_l \log(E_l) .
\label{eq:leftinv_error_direct_obs}
\end{aligned}
\end{equation}
The above equation~\eqref{eq:leftinv_error_direct_obs} is the same as
the equation~\eqref{eq:basic_eqn}, if we identify $E_l$ with $E$.
This means that if $\hat{X}$ is sufficiently close to $X$ so that $E_l
\in B(I_n,1)$, then by Lemma~\ref{lemma:basic_eqn},
system~\eqref{eq:leftinv_error_direct_obs} in $e_l$-coordinates reads
\begin{equation}
\begin{aligned}
\dot{e}_l = -a_0 e_l.
\label{eq:leftinv_error_direct_obs_log}
\end{aligned}
\end{equation}
If $a_0 > 0$, Lemma~\ref{lemma:basic_eqn} states that the equilibrium
point $E_l = I_n$ is locally exponentially stable for the
dynamics~\eqref{eq:leftinv_error_direct_obs}.

\begin{corollary}
\label{cor:prob2}
For $E_l(0) \in B(I_n, 1)$, the direct
observer~\eqref{eq:obs_passive1} exponentially stabilizes $E_l =
I_n$. Furthermore, under Assumption~\ref{ass:bounded}, the passive
observer solves Problem~\eqref{prob:kinematic}.
\end{corollary}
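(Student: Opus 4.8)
The plan is to reduce the corollary to results already in hand, mirroring the argument given for the passive observer in Corollary~\ref{cor:prob1}. The essential computation has in fact already been carried out in~\eqref{eq:leftinv_error_direct_obs}, which shows that under the direct observer~\eqref{eq:obs_direct1} the left-invariant error obeys $\dot{E}_l = -a_0 E_l \log(E_l)$. Since this is formally identical to the canonical equation~\eqref{eq:basic_eqn} with $E$ replaced by $E_l$, the first assertion is immediate from Lemma~\ref{lemma:basic_eqn}: on $B(I_n,1)$ the error dynamics are differentially equivalent, through the change of coordinates $e_l = \log(E_l)$, to the linear system $\dot{e}_l = -a_0 e_l$, whose origin is exponentially stable for every $a_0 > 0$. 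Because exponential stability of an equilibrium is a coordinate-independent property, $E_l = I_n$ is locally exponentially stable for~\eqref{eq:leftinv_error_direct_obs}, which establishes the first claim.

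For the second claim I would invoke Proposition~\ref{prop:norm_convergence}. Assumption~\ref{ass:bounded} guarantees that $X(t)$ is bounded, and the first part of the corollary supplies $E_l \to I_n$ exponentially; Proposition~\ref{prop:norm_convergence} then delivers $\hat{X}(t) \to X(t)$ exponentially, which is exactly what Problem~\ref{prob:kinematic} asks for. As in Corollary~\ref{cor:prob1}, it is worth noting that the convergence $E_l \to I_n$ itself does not require boundedness of $X$; Assumption~\ref{ass:bounded} enters only to transfer this convergence on the group to the physical estimation error $\hat{X} - X$ in the embedding space.

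I expect no genuine obstacle here, since all the machinery was assembled in Section~\ref{sec:ODE} and in the error-dynamics computation~\eqref{eq:leftinv_error_direct_obs}; the proof is a short assembly of existing pieces. The one point deserving a line of care is that Lemma~\ref{lemma:basic_eqn} provides differential equivalence only on $B(I_n,1)$, where $\log$ is a diffeomorphism, so I would confirm that trajectories starting near $I_n$ never leave this ball. This is precisely the positive-invariance estimate established inside the proof of Proposition~\ref{prop:basic_eqn_soln}: if $\log(E_l(0)) \in B(0_n, \log(2))$, then $E_l(t) = \exp(\exp(-a_0 t) \log(E_l(0)))$ remains in $B(I_n,1)$ for all $t \geq 0$ while $e_l(t) \to 0_n$ exponentially, which pins down the claimed region of attraction and completes the argument.
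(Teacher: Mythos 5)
Your proposal follows the paper's own route exactly: the computation~\eqref{eq:leftinv_error_direct_obs} showing $\dot{E}_l = -a_0 E_l \log(E_l)$, the identification with~\eqref{eq:basic_eqn} and appeal to Lemma~\ref{lemma:basic_eqn} for local exponential stability of $E_l = I_n$, and then Proposition~\ref{prop:norm_convergence} together with Assumption~\ref{ass:bounded} to conclude $\hat{X} \to X$. Your closing remark on positive invariance via Proposition~\ref{prop:basic_eqn_soln} is a welcome extra precision (and correctly flags that the guaranteed basin is really $\log(E_l(0)) \in B(0_n,\log 2)$ rather than all of $B(I_n,1)$), but it does not change the argument.
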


As before, we seek to weaken Assumption~\ref{ass:bounded} and hence we
examine the dynamics of the right-invariant error $E_r$, when the
direct observer is used
\begin{equation}
\begin{aligned}
\dot{E}_r &= \frac{d}{dt} \left[ \hat{X} X^{-1} \right] \\
&= \dot{\hat{X}} X^{-1} - \hat{X} X^{-1} \dot{X} X^{-1} \\
&= X u X^{-1} \hat{X} X^{-1} - a_0 \hat{X} \log(X^{-1} \hat{X})
X^{-1} - \hat{X} u X^{-1} \\
&= X u X^{-1} E_r - E_r X u X^{-1} - a_0 E_r X \log(E_l) X^{-1} \\
&= X u X^{-1} E_r - E_r X u X^{-1} - a_0 E_r \log(E_r) \\
&= \delta_D(u, X, E_r) - a_0 E_r \log(E_r). 
\label{eq:rightinv_error_direct_obs}
\end{aligned}
\end{equation}
Here, $\delta_D(u, X, E_r) \coloneqq X u X^{-1} E_r - E_r X u X^{-1}$
is a perturbation term that vanishes when $E_r = I_n$.  The above
equation~\eqref{eq:rightinv_error_direct_obs} has the same problem
that we encountered when trying to analyze
equation~\eqref{eq:leftinv_error_passive_obs}.  Namely, the matrices
$E_r$ and $\dot{E}_r$ do not commute in general, because $E_r$ and
$\delta_D(u, X, E_r)$ do not commute in general.  Fortunately, we can
transform equation~\eqref{eq:leftinv_error_passive_obs} into $\log$
coordinates by once again differentiating the identity $e_r = X e_l
X^{-1}$. To be able to do this, it is sufficient that the conditions
of Lemma~\ref{lemma:log_error_relationship} are satisfied, i.e., that
$E_l, E_r \in B(I_n,1)$. Doing so, one obtains
\begin{equation}
\begin{aligned}
\dot{e}_r &= \frac{d}{dt} \left[ X e_l X^{-1} \right] \\
&= \dot{X} e_l X^{-1} + X \dot{e}_l X^{-1} - X e_l X^{-1} \dot{X}
X^{-1} \\
&= X u e_l X^{-1} - a_0 X e_l X^{-1} - X e_l u X^{-1} \\
&= -a_0 e_r + \left[ X u X^{-1}, e_r \right] .
\label{eq:rightinv_error_direct_obs_log}
\end{aligned}
\end{equation}
The above system, rewritten $\dot{e}_r = -a_0 e_r + \left[ X u X^{-1},
  e_r \right]$, is a non-autonomous, bilinear system. Once again, we
cannot weaken the requirement of Assumption~\ref{ass:bounded} and rely
on Proposition~\ref{prop:norm_convergence} to ensure that $E_l \to 0_n$
as $t \to \infty$ is equivalent to $E_r \to 0_n$ as $t \to \infty$.

\subsection{Local partial state observers}
\label{sec:LPSO_observer_proof}
We now analyze the estimation error dynamics when using the observers
proposed in Section~\ref{sec:LPSO_observer} and defined
by~\eqref{eq:LPSO_obs_passive1} and~\eqref{eq:LPSO_obs_direct1}.

\subsubsection{Direct Observer}
When the direct observer~\eqref{eq:LPSO_obs_direct1} is applied to
estimate the state of system~\eqref{eq:LPSO_system_def} the dynamics
of the right-invariant error $E_l$ are
\begin{equation}
\begin{aligned}
\dot{E}_l &= e_2 E_l  - a_{d-1} E_l \log( E_l ) \\
\dot{e}_2 &= e_{3} - a_{d-2} \log(E_l) \\
\vdots \\
\dot{e}_{d-1} &= e_d - a_1 \log(E_l) \\
\dot{e}_d &= - a_{0} \log(E_l) .
\label{eq:LPSO_all_error_direct_obs}
\end{aligned}
\end{equation}
The above differential equation is formally the same as
equation~\eqref{eq:LPSO_basic_eqn}, if we identify $E$ with
$E_l$. Application of Lemma~\ref{lemma:LPSO_basic_eqn_stable}
immediately yields the following solution to
Problem~\ref{prob:dynamic}.

\begin{corollary}
\label{cor:P2}
For $(E_l, e_2, \hdots, e_d) \in B(I_n, I) \times \left(\M{n,
    \Real}\right)$, the direct observer~\eqref{eq:LPSO_obs_direct1}
exponentially stabilizes $(I_n, 0_n, \ldots, 0_n)$. Furthermore, under
Assumption~\ref{ass:dyn_bounded}, the direct observer solves
Problem~\ref{prob:dynamic}.
\end{corollary}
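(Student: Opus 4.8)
The plan is to recognize that all of the machinery has already been assembled, so that the corollary follows by composing two earlier results. First I would observe that, under the direct LPSO~\eqref{eq:LPSO_obs_direct1} applied to~\eqref{eq:LPSO_system_def}, the closed-loop error dynamics have already been computed in~\eqref{eq:LPSO_all_error_direct_obs}, and that these dynamics are formally identical, term for term, to the abstract system~\eqref{eq:LPSO_basic_eqn} studied in Section~\ref{sec:LPSO_de_matrix}, under the identification $E \leftrightarrow E_l$ together with $e_i \leftrightarrow e_i$ for $i = 2, \ldots, d$. The only hypothesis needed for this identification to be meaningful is that $E_l(0) \in B(I_n,1)$, so that $\log(E_l)$ is given by its convergent series~\eqref{eq:logarithm_def} and the logarithmic change of coordinates of Section~\ref{sec:ODE} is valid.

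Next I would invoke Lemma~\ref{lemma:LPSO_basic_eqn_stable}. Because the gains $a_0, \ldots, a_{d-1}$ are, by design, chosen so that $p(s) = s^d + a_{d-1} s^{d-1} + \cdots + a_1 s + a_0$ is Hurwitz, that lemma asserts that the equilibrium $(I_n, 0_n, \ldots, 0_n)$ of~\eqref{eq:LPSO_basic_eqn}, and hence of~\eqref{eq:LPSO_all_error_direct_obs}, is locally exponentially stable. Thus there is a neighbourhood of $(I_n, 0_n, \ldots, 0_n)$ from which $E_l(t) \to I_n$ and $e_i(t) \to 0_n$ exponentially as $t \to \infty$. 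This settles the first sentence of the corollary.

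For the second sentence I would translate this error convergence into the convergence demanded by Problem~\ref{prob:dynamic}. For the group component, exponential convergence $E_l \to I_n$ together with Assumption~\ref{ass:dyn_bounded}, which guarantees that $X(t)$ is bounded, places us exactly in the hypotheses of Proposition~\ref{prop:norm_convergence}; its inequalities~\eqref{eq:lemma_norm_convergence_inequalities}--\eqref{eq:lemma_norm_convergence_inequalities2} yield $\|\hat{X}(t) - X(t)\| \to 0$ exponentially and also let me pull a small bound on $\|\hat{X}(0) - X(0)\|$ back to a small bound on $\|E_l(0) - I_n\|$. For the Lie-algebra components the translation is immediate: since $e_i = x_i - \hat{x}_i$ lives in the vector space $\Lie(\g)$, exponential decay of $\|e_i(t)\|$ is literally exponential decay of $\|\hat{x}_i(t) - x_i(t)\|$, and $\|e_i(0)\| = \|\hat{x}_i(0) - x_i(0)\|$.

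I expect the only real subtlety to be bookkeeping the size of the region of attraction: the initial estimation errors must be small enough that simultaneously (i) $E_l(0) \in B(I_n,1)$, so that the logarithmic coordinates and the differential equivalence of Section~\ref{sec:ODE} hold, and (ii) $(E_l(0), e_2(0), \ldots, e_d(0))$ lies inside the region of attraction supplied by the linearization argument of Lemma~\ref{lemma:LPSO_basic_eqn_stable}. Both constraints define open neighbourhoods of the respective equilibria, so intersecting them and using the bounds of Proposition~\ref{prop:norm_convergence} to convert between $\hat{X}$-errors and $E_l$-errors produces the required $\delta$. No additional estimation is needed, because the nonlinear perturbation term $K(e_1, e_2)$ that distinguishes~\eqref{eq:LPSO_basic_eqn} from a purely linear system has already been absorbed into the local-exponential-stability claim of Lemma~\ref{lemma:LPSO_basic_eqn_stable}.
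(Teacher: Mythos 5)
Your proposal is correct and follows essentially the same route as the paper: identify the error dynamics~\eqref{eq:LPSO_all_error_direct_obs} with the abstract system~\eqref{eq:LPSO_basic_eqn} under $E \leftrightarrow E_l$, apply Lemma~\ref{lemma:LPSO_basic_eqn_stable} for local exponential stability, and then pass to $\|\hat{X}-X\|$ via Proposition~\ref{prop:norm_convergence} under Assumption~\ref{ass:dyn_bounded} (a step the paper leaves implicit but which you correctly spell out, including the trivial vector-space argument for the $e_i$). No gaps.
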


\subsubsection{Passive Observer}
When the passive observer~\eqref{eq:LPSO_obs_passive1} is employed to
estimate the state of system~\eqref{eq:LPSO_system_def} the
dynamics of the right-invariant error $E_r$ are given by
\begin{equation}
\begin{aligned}
\dot{E}_r &= E_r \Ad_X ( e_2 ) 
 - a_{d-1} E_r \log( E_r ) \\
\dot{e}_2 &= e_{3} - a_{d-2} \log(E_r) \\
\vdots \\
\dot{e}_{d-1} &= e_d - a_1 \log(E_r) \\
\dot{e}_d &= - a_{0} \log(E_r) .
\label{eq:LPSO_all_error_passive_obs}
\end{aligned}
\end{equation}
Lemma~\ref{lemma:LPSO_basic_eqn_stable} cannot be used to deduce the
stability of the equilibrium point $(E_r, e_2, \hdots, e_d) = (I_n,
0_n, \hdots, 0_n)$.  Unfortunately, we are not able to prove the
stability of these error dynamics. We conjecture that the passive LPSO
is locally exponentially convergent if
Assumption~\ref{ass:dyn_bounded} holds.  This conjecture is
supported by simulation, where the passive LPSO performs better than
the direct LPSO, when a large amount of measurement noise is present
in $Y$.


\section{Examples}
\label{sec:examples}

\subsection{State estimation on $\SO{3, \Real}$}
\label{sec:LFSO_SO3}
Recall the kinematic model of a rotating rigid body
~\eqref{eq:system_so3} introduced in Section~\ref{sec:motivation}. For
the kinematics~\eqref{eq:system_so3}, the proposed passive observer is
\begin{equation}
\begin{aligned}
\dot{\hat{R}} &= \hat{R} \omega + a_0 \hat{R} \log(Y^{\top} \hat{R}) ,
\label{eq:so3_passive_obs1}
\end{aligned}
\end{equation}
and the direct observer is
\begin{equation}
\begin{aligned}
\dot{\hat{R}} &= Y \omega Y^\top \hat{R} + a_0 \hat{R} \log(Y^\top
\hat{R}).
\label{eq:so3_direct_obs1}
\end{aligned}
\end{equation}
We now briefly discuss the connection between our two observers and
the filters from~\cite{MahHamPfl:05}, \cite{MahHamPfl:08}. The authors
propose passive and direct filters that are similar to the ones
proposed herein. The passive filter of \cite{MahHamPfl:05},
\cite{MahHamPfl:08} is
\begin{equation}
\begin{aligned}
\dot{\hat{R}} &= \hat{R} \omega + k \hat{R} \pi_a(\hat{R}^\top R) ,
\label{eq:so3_passive_obs2}
\end{aligned}
\end{equation}
and the direct filter of \cite{MahHamPfl:05}, \cite{MahHamPfl:08} is
\begin{equation}
\begin{aligned}
\dot{\hat{R}} &= \Ad_R(\omega) \hat{R} + k \Ad_{\hat{R}}(
\pi_a(\hat{R}^\top R) ) \hat{R} \\
&= R \omega R^\top \hat{R} + k \hat{R} \pi_a(\hat{R}^\top R) ,
\label{eq:so3_direct_obs2}
\end{aligned}
\end{equation}
where $k \in \Real$ is a positive constant that controls the rate of
observer convergence and $\pi_a : \M{n, \Real} \to \Lie(\SO{3,
  \Real})$, $A \mapsto \frac{1}{2}(A - A^T)$ is the anti-symmetric
projection operator.

The synchronization terms of the two passive observers
\eqref{eq:so3_passive_obs1} and \eqref{eq:so3_passive_obs2} are the
same: $\hat{R} \omega$. The synchronization terms of the two direct
observers \eqref{eq:so3_direct_obs1} and \eqref{eq:so3_direct_obs2}
are also the same: $R \omega R^T \hat{R}$. However, the innovation
term of our proposed observer, when applied to $\SO{3, \Real}$ is
different from the innovation term of the observers proposed in
\cite{MahHamPfl:05}, \cite{MahHamPfl:08}. In particular, for any $A
\in \SO{3, \Real}$ we can express $\log{(A)}$ in terms of the
anti-symmetric projection operator using~\cite[Section
III. C.]{MahHamPfl:05}
\begin{equation}
\begin{aligned}
\pi_a(A) = \frac{\sin(\theta_A)}{\theta_A} \log(A)
\label{eq:asymproj_iden_gen}
\end{aligned}
\end{equation}
where $\theta_A$ is the rotation angle corresponding to the axis-angle
representation of $A$. The angle $\theta_A$ can be
computed~\cite{SpoHutVid:06} as $\theta_{A} \coloneqq \arccos \left(
  \frac{\tr(A)- 1}{2} \right)$.

Using the identity~\eqref{eq:asymproj_iden_gen}, we can express the
observers~\eqref{eq:so3_passive_obs1} and~\eqref{eq:so3_direct_obs1}
in terms of the anti-symmetric projection operator, rather than the
logarithm map.  For $\theta_{Y^{\top} \hat{R}} \ne \pm \pi$, the
passive LFSO~\eqref{eq:so3_passive_obs1} is rewritten as
\begin{equation}
\begin{aligned}
\dot{\hat{R}} &= \hat{R} u - a_0 
\frac{\theta_{Y^{\top} \hat{R}}}{\sin(\theta_{Y^{\top} \hat{R}})} 
\hat{R} \pi_a(Y^{\top} \hat{R}) 
\label{eq:so3_passive_obs1_proj}
\end{aligned}
\end{equation}
and the direct LFSO~\eqref{eq:so3_direct_obs1} is rewritten as
\begin{equation}
\begin{aligned}
\dot{\hat{R}} &= Y u Y^{\top} \hat{R} - a_0 
\frac{\theta_{Y^{\top} \hat{R}}}{\sin(\theta_{Y^{\top} \hat{R}})} 
\hat{R} \pi_a(Y^{\top} \hat{R}) .
\label{eq:so3_direct_obs1_proj}
\end{aligned}
\end{equation}
If we take the observer gains to be equal $k = a_0$,
then~\eqref{eq:so3_passive_obs1_proj},~\eqref{eq:so3_direct_obs1_proj}
differ from~\eqref{eq:so3_passive_obs2},~\eqref{eq:so3_direct_obs2} by
the scalar quantity $\frac{\theta_{Y^{\top}
    \hat{R}}}{\sin(\theta_{Y^{\top} \hat{R}})}$, which appears in the
innovation terms
of~\eqref{eq:so3_passive_obs1_proj},~\eqref{eq:so3_direct_obs1_proj},
but does not appear in the innovation terms
of~\eqref{eq:so3_passive_obs2},~\eqref{eq:so3_direct_obs2}.  This
scalar quantity approaches $1$ as $\theta_{Y^{\top} \hat{R}}$
approaches $0$.  Thus, on $\SO{3, \Real}$ and for small
$\theta_{Y^{\top} \hat{R}}$, our observers behave similar to the
observers of~\cite{MahHamPfl:05},~\cite{MahHamPfl:08}.

We now simulate our direct and passive LFSOs. The initial conditions
for the plant and the observer are chosen as
\begin{equation*}
\begin{aligned}
R(0) = \left(
\begin{array}{ccc}
0.6330 & -0.1116 & -0.7660  \\
0.7128 & -0.3020 & 0.6330  \\
-0.3020 & -0.9467 & -0.1116 
\end{array}
\right) ,   \qquad \hat{R}(0) = I_3.
\end{aligned}
\end{equation*}
The angular velocity input is as
\begin{equation*}
\begin{aligned}
u(t) = \left(
\begin{array}{ccc}
0 & -2 \sin(t) & \cos(t)  \\
2 \sin(t) & 0 & -\sin(t)  \\
-\cos(t) & \sin(t) & 0 
\end{array}
\right)  .
\end{aligned}
\end{equation*}
The observer gain is taken to be $a_0 =
1$. Figure~\ref{fig:plot_noiseless} shows the simulation results when
there is no noise, i.e., when $Y$ is exactly equal to $R$.  From these
plots we see that, when there no measurement noise, the direct and
passive LFSOs have similar performance.

The Lie group estimation errors, $E_l$ and $E_r$, are initially (at
time $t=0$) $E_l(0) = R^{-1}(0) \hat{R}(0) = R^{-1}(0)$, and $E_r(0) =
\hat{R}(0) R^{-1}(0) = R^{-1}(0)$.  Calculating the distance between
$E_l(0)$ or $E_r(0)$ and $I_3$, we get $\| E_l(0) - I_3 \| = \| E_r(0)
- I_3 \| = 1.6675$. Therefore $E_l(0), E_r(0) \notin B(I_3,1)$.
Strictly speaking, our analysis of the Lie group estimation error
dynamics was performed on the ball $B(I_n,1)$. The simulation shows
that that the LFSOs are nevertheless convergent and suggests that the
region of convergence of the LFSO is larger than $B(I_n,1)$.

Figure~\ref{fig:plot_noise} shows simulations of our observers in the
presence of measurement noise. The noisy measurement $Y$ is obtained
by multiplying $R$ by a randomly generated rotation matrix $N$, to
obtain: $Y = R N$.  To generate the random rotation matrix $N \in
\SO{3, \Real}$, we generate a random skew-symmetric matrix, $n \in
\Lie(\SO{3})$, whose elements are normally distributed, with zero-mean
and standard deviation of $\sigma = 0.4$.  The random rotation matrix
$N \in \SO{3}$ is then computed as $N \eqdef \exp(n)$.

From Figure~\ref{fig:plot_noise}, we see that the passive observer
appears to be more robust with respect to noise than the direct
observer. This observation can be explained by noting that when $Y =
RN$, the synchronization term $R N\omega N^\top R^\top$ of the direct
observer is explicitly impacted by the noise. The noise has no
explicit effect on the synchronization term of the passive observer.
\begin{figure}[ht]
\centering
\includegraphics[width=0.5\textwidth]{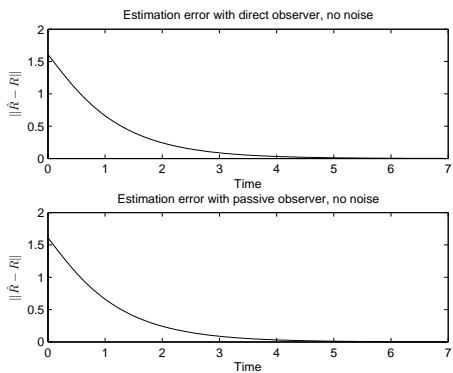}
\caption{$\| \hat{R} - R \|$ versus time for the proposed LFSO
  observers on $\SO{3, \Real}$ without measurement noise. Typical
  result from repeated testing.}
\label{fig:plot_noiseless}
\end{figure}
\begin{figure}[ht]
\centering
\includegraphics[width=0.5\textwidth]{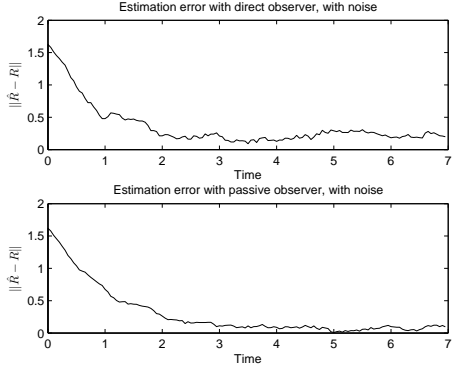}
\caption{$\| \hat{R} - R \|$ versus time for the proposed LFSO
  observers on $\SO{3, \Real}$ with a significant amount of
  measurement noise. Typical result from repeated testing.}
\label{fig:plot_noise}
\end{figure}

\subsection{Dynamic Rigid-Body Orientation Estimation on $\SO{3}$}
\label{sec:LPSO_SO3}
Recall the dynamic model of a rotating rigid body
~\eqref{eq:dynamic_so3} introduced in
Section~\ref{sec:motivation}. A similar model was discussed
in~\cite[Example 2]{Brockett:72}. For system~\eqref{eq:system_so3},
the proposed direct observer is
\begin{equation}
\begin{aligned}
\dot{\hat{R}} &= Y \hat{\omega} Y^{-1} \hat{R} - a_1 \hat{R} \log(Y^{-1} \hat{R}) \\
\dot{\hat{\omega}} &= u - a_0 \log(Y^{-1} \hat{R}) 
\label{eq:LPSO_so3_direct_obs}
\end{aligned}
\end{equation}
and the passive LPSO is
\begin{equation}
\begin{aligned}
\dot{\hat{R}} &= \hat{R} \hat{\omega} - a_1 \hat{R} \log(Y^{-1} \hat{R}) \\
\dot{\hat{\omega}} &= u - a_0 \log(Y^{-1} \hat{R}) . 
\label{eq:LPSO_so3_passive_obs}
\end{aligned}
\end{equation}
We simulate the direct and the passive LPSOs, with increasing amounts
of noise in the output.  The initial conditions for the plant and the
observer are chosen as
\begin{equation*}
\begin{aligned}
R(0) = \left(
\begin{array}{ccc}
0 & 1 & 0  \\
0 & 0 & 1  \\
1 & 0 & 0 
\end{array}
\right) ,   \hspace{3mm}  
\omega(0) = \left(
\begin{array}{ccc}
0 & -1 & 1  \\
1 & 0 & -1  \\
-1 & 1 & 0 
\end{array}
\right)
,   \hspace{3mm}  
\hat{R}(0) = I_3
,   \hspace{3mm}  
\hat{\omega}(0) = 0_3 
 .
\end{aligned}
\end{equation*}
The angular acceleration input is chosen to be
\begin{equation*}
\begin{aligned}
u(t) = \left(
\begin{array}{ccc}
0 & -2 \sin(t) & \cos(t)  \\
2 \sin(t) & 0 & -\sin(t)  \\
-\cos(t) & \sin(t) & 0 
\end{array}
\right)  .
\end{aligned}
\end{equation*}
The observer gains are chosen as 
\[
a_0 = 1, \hspace{7mm} a_1 = 2  .
\]
Noise is injected into the output via the random rotation matrix $N
\in \SO{3, \Real}$, by setting $Y = R N$.  The matrix $N$ is generated
as in Section~\ref{sec:LFSO_SO3}.  The simulation results are shown in
Figure~\ref{fig:so3_sim_05}.  From the simulations, when there is no
measurement noise, the direct LPSO appears to converge faster than the
passive LPSO.  However, with a large amount of measurement noise, the
passive LPSO appears to be comparable to the direct LPSO.
%
%
%
%
\begin{figure}[ht!]
\centering
\begin{subfigure}{0.5\textwidth}
\centering
\includegraphics[width=\textwidth]{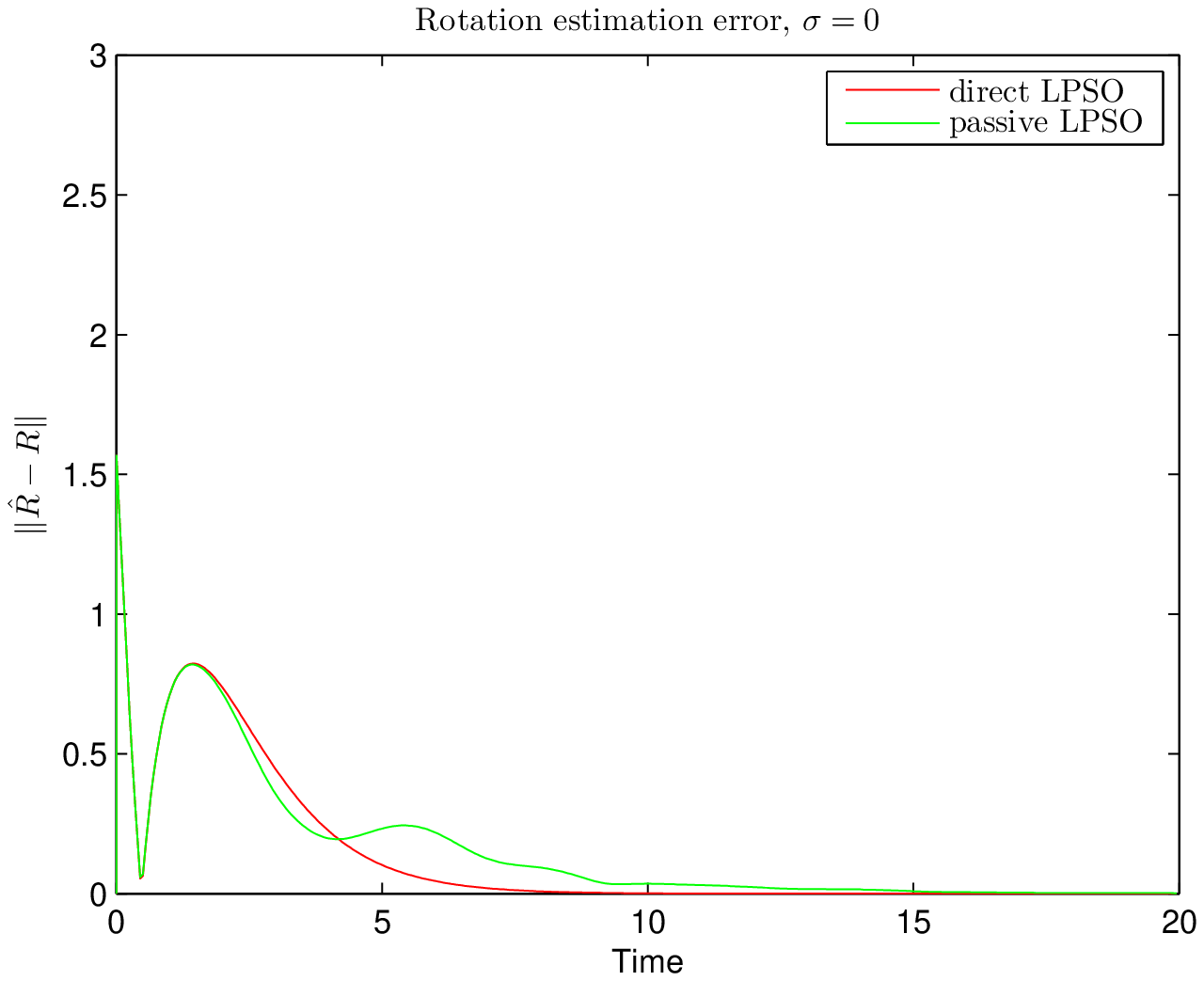}
\caption{$\|\hat{R} - R\|$ versus time with $\sigma = 0$.}
\label{fig:gull}
\end{subfigure}%
~
\begin{subfigure}{0.5\textwidth}
\centering
\includegraphics[width=\textwidth]{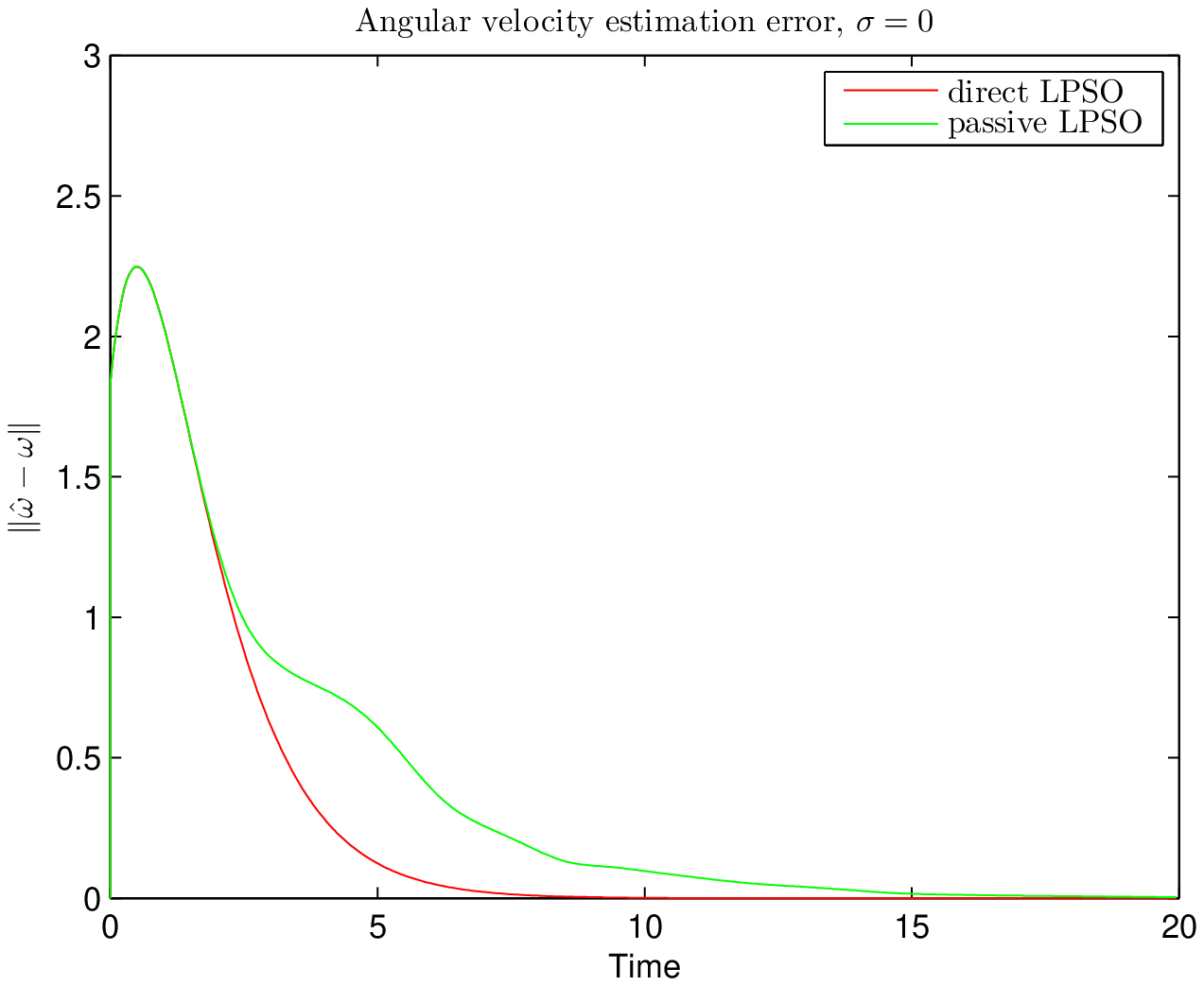}
\caption{$\|\hat{\omega} - \omega\|$ versus time with $\sigma = 0$.}
\label{fig:tiger}
\end{subfigure}

\begin{subfigure}{0.5\textwidth}
\centering
\includegraphics[width=\textwidth]{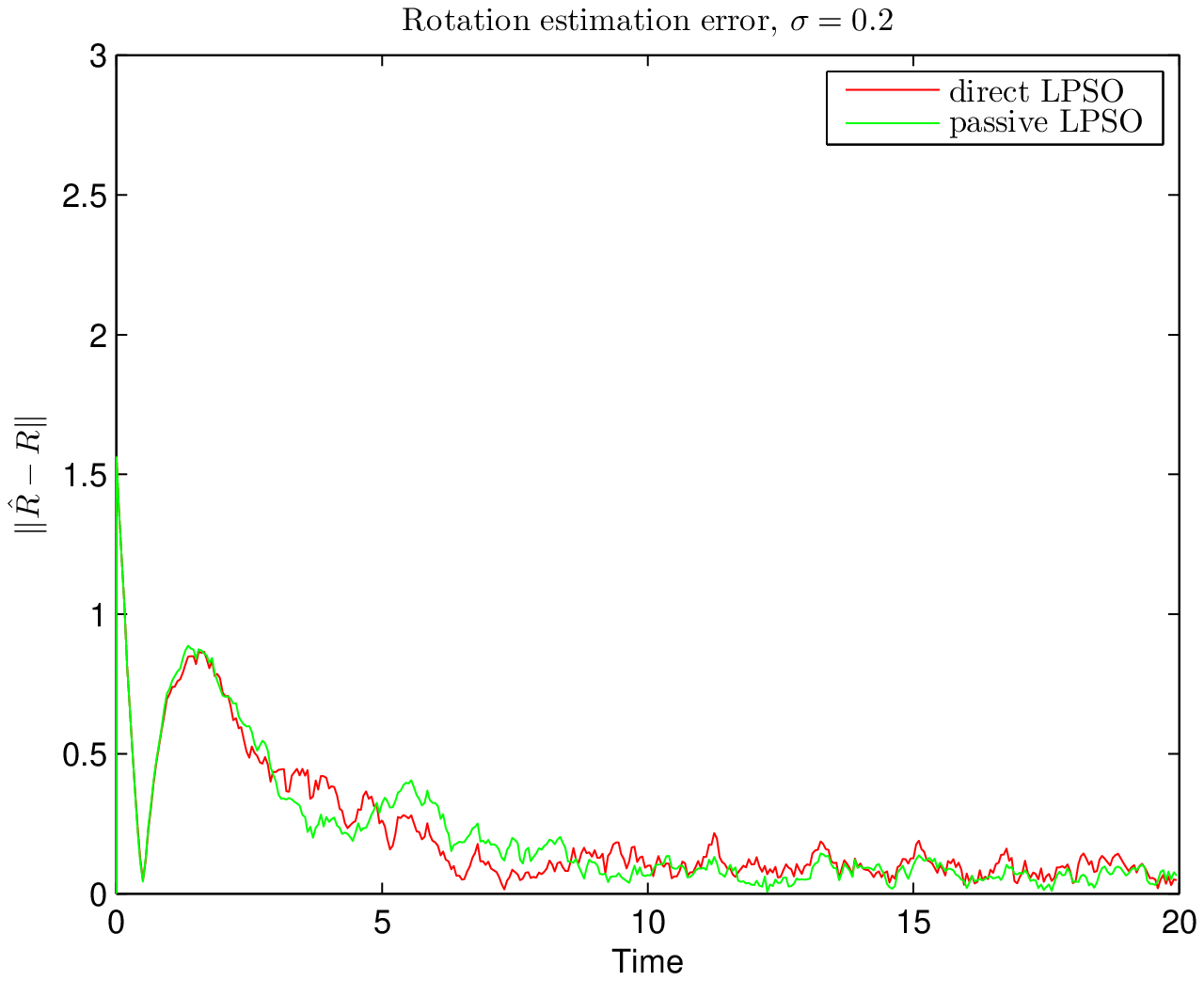}
\caption{$\|\hat{R} - R\|$ versus time with $\sigma = 0.2$.}
\label{fig:gull}
\end{subfigure}%
~
\begin{subfigure}{0.5\textwidth}
\centering
\includegraphics[width=\textwidth]{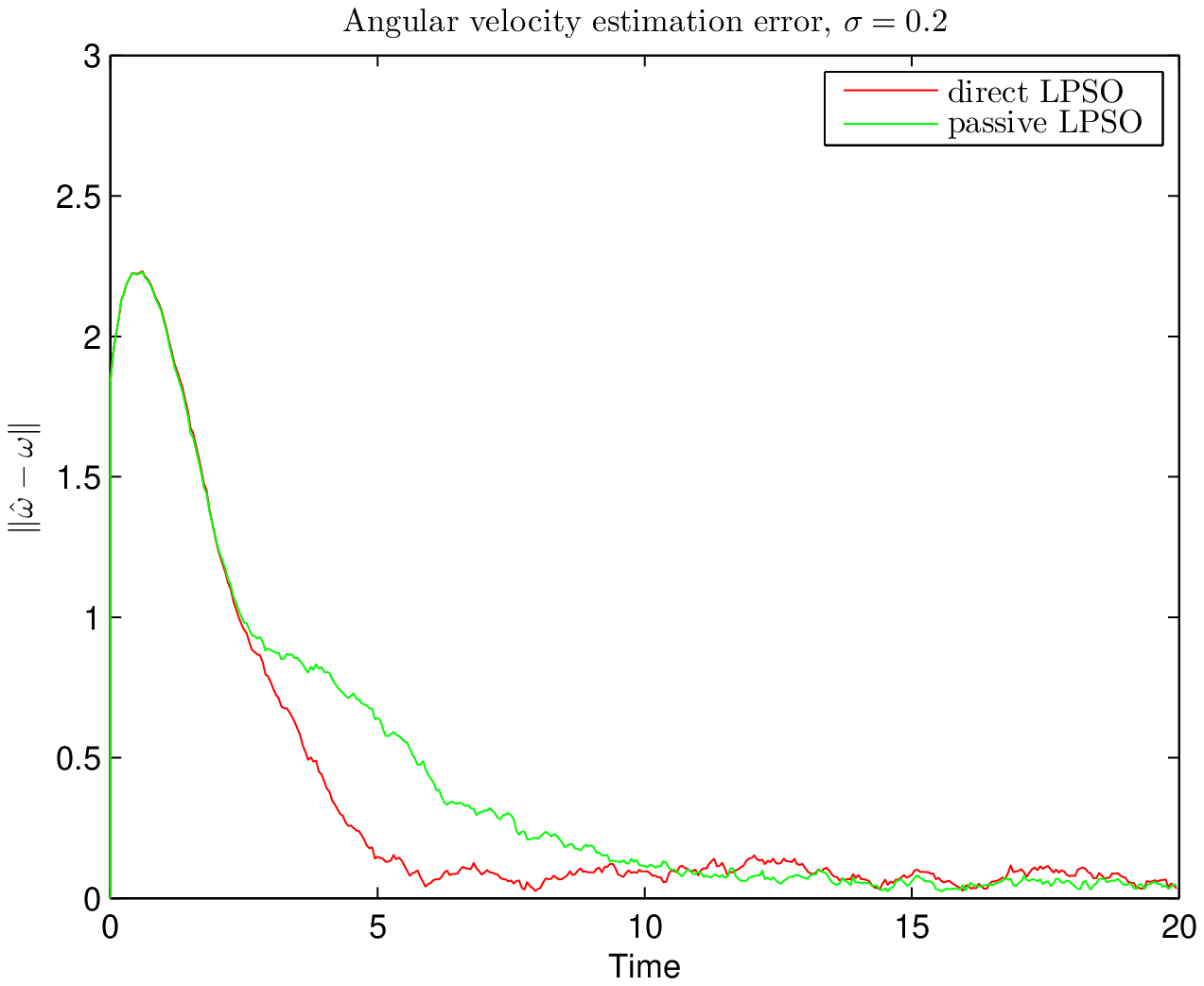}
\caption{$\|\hat{\omega} - \omega\|$ versus time with $\sigma = 0.2$.}
\label{fig:tiger}
\end{subfigure}

\begin{subfigure}{0.5\textwidth}
\centering
\includegraphics[width=\textwidth]{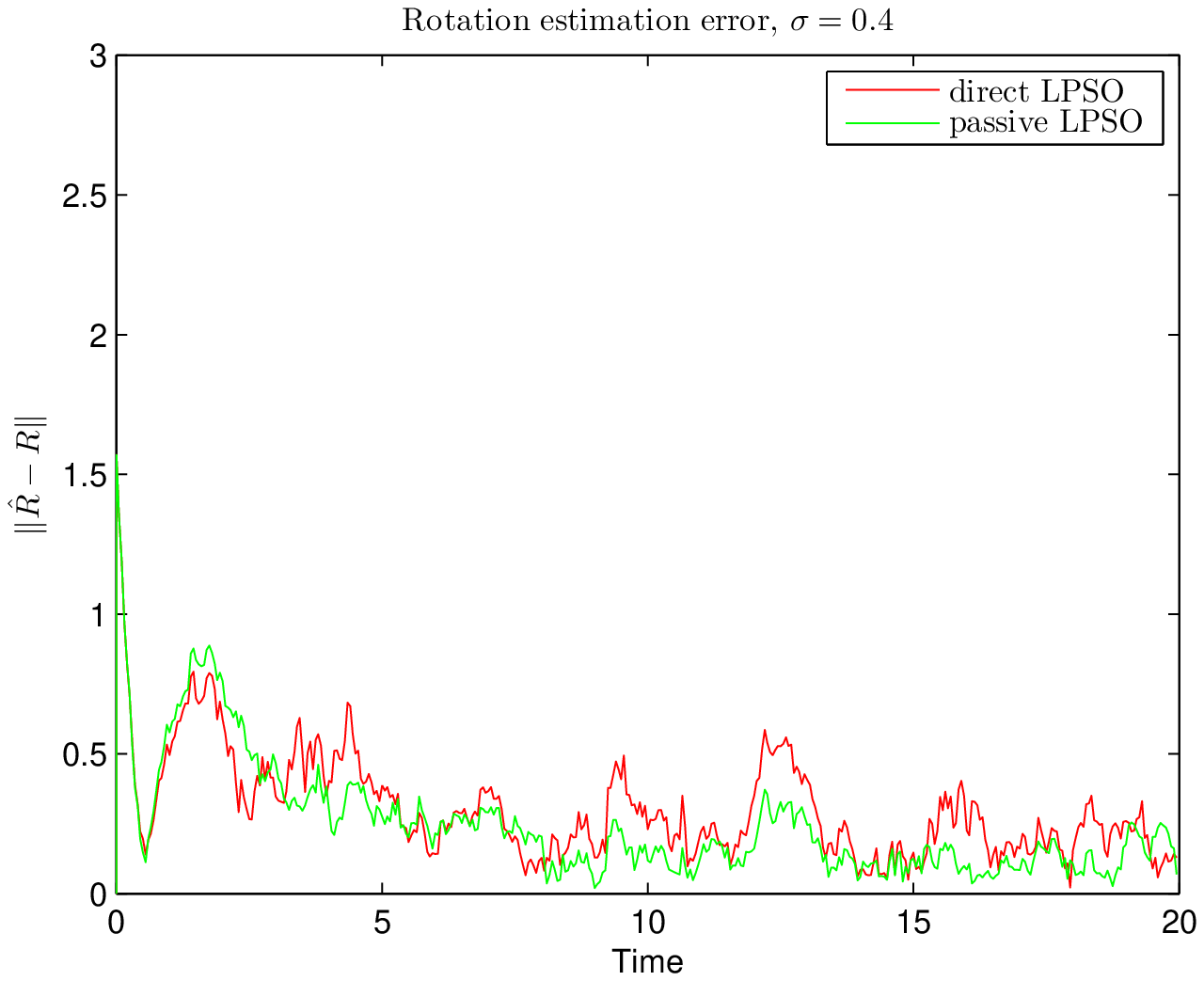}
\caption{$\|\hat{R} - R\|$ versus time with $\sigma = 0.4$.}
\label{fig:gull}
\end{subfigure}%
~
\begin{subfigure}{0.5\textwidth}
\centering
\includegraphics[width=\textwidth]{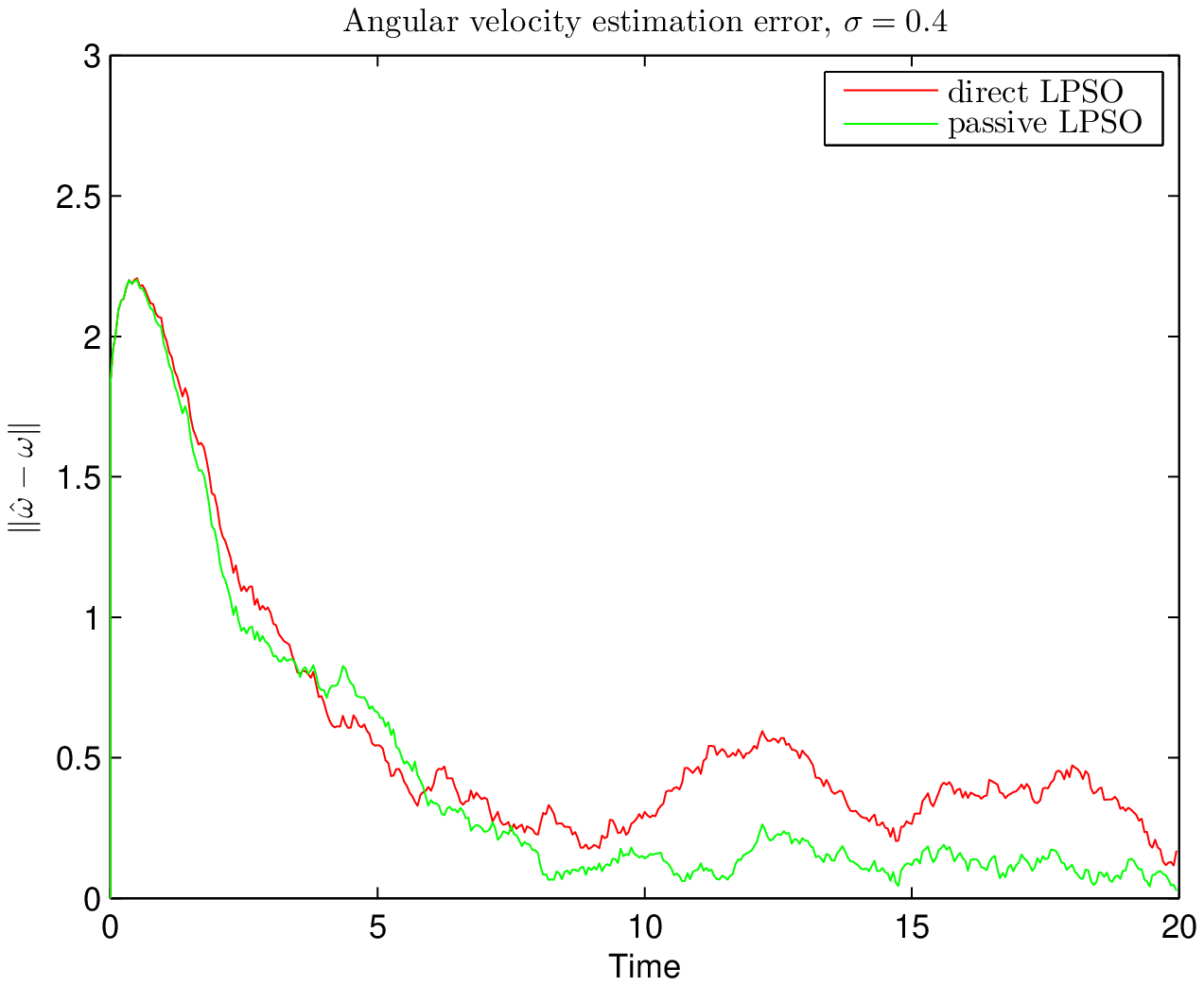}
\caption{$\|\hat{\omega} - \omega\|$ versus time with $\sigma = 0.4$.}
\label{fig:tiger}
\end{subfigure}
\caption{Direct and passive LPSOs for dynamic system on $\SO{3,
    \Real}$ with increasing amounts of measurement noise.}
\label{fig:so3_sim_05}
\end{figure}
\section{Conclusions}

We have proposed observers for two different classes of systems on
linear Lie groups. The first class of system is one in which the
entire state evolves on the general linear group and the entire state
is measured. We call observers for this class of system Lie group
full-state observers. We have shown that if the systems state is
bounded, then both the left and right invariant estimation errors are
differentially equivalent to a stable LTI system and hence are locally
exponentially stable. The second class of system is one in which only
part of the state evolves on the general linear group and only this
portion of the state is measured. We call observers for this class of
system Lie group partial-state observers. We have shown that if the
system's state is bounded, then the left and right estimation errors
are locally exponentially stable using the direct observer. For this
class of system the passive observer was shown to work well in
simulation. In all cases the observers were shown to work well in
simulation in the presence of constant disturbances.

\bibliographystyle{ieeetr}
\bibliography{LieObservers}




\end{document}